  \newcommand\address[1]{\newcommand\addressaux{#1}}
  \newcommand\email[1]{\newcommand\emailaux{#1}}
  \newcommand\subjclass[1]{\newcommand\subjclassaux{#1}}
  \newcommand\keywords[1]{\newcommand\keywordsaux{#1}}
  \renewcommand*{\author}[1]{\gdef\@author{#1$^\dagger$}}
  \newcommand\mydate{\date{\small\itshape\today
    \footnotetext{2010 {\itshape Mathematics Subject Classification}. Primary \subjclassaux.}
    \footnotetext{{\itshape Key words and phrases}. \keywordsaux.}
    \footnotetext{$^\dagger${\itshape Current address}: \addressaux.}
    \footnotetext{$^\dagger${\itshape E-mail address}: \tt\emailaux.}
    }}
    \def\newmcommand#1[#2]#3{\newcommand{#1}[#2]{\ensuremath{#3}\xspace}}
    \def\renewmcommand#1[#2]#3{\renewcommand{#1}[#2]{\ensuremath{#3}\xspace}}
    \newmcommand   \M         [0] { M                                  }
    \newmcommand   \G         [0] { G                                  }
    \newmcommand   \C         [0] { \mathcal{C}                        }
    \newmcommand   \detvar    [1] { D^{#1}                             }
    \newmcommand   \detvark   [0] { D^{k}                              }
    \newmcommand   \detvari   [0] { D^{i}                              }
    \newmcommand   \detvarj   [0] { D^{j}                              }
    \newmcommand   \jet       [2] { #2_{#1}                            }
    \newmcommand   \arc       [1] { \jet{\infty}{#1}                   }
    \newmcommand   \jetn      [1] { \jet{n}{#1}                        }
    \newmcommand   \jetm      [1] { \jet{m}{#1}                        }
    \newmcommand   \jetp      [1] { \jet{p}{#1}                        }
    \newmcommand   \jetq      [1] { \jet{q}{#1}                        }
    \newmcommand   \jeti      [1] { \jet{i}{#1}                        }
    \newmcommand   \jetj      [1] { \jet{j}{#1}                        }
    \newmcommand   \pjet      [2] { \left({#2}\right)_{#1}             }
    \newmcommand   \parc      [1] { \pjet{\infty}{#1}                  }
    \newmcommand   \pjetn     [1] { \pjet{n}{#1}                       }
    \newmcommand   \pjetp     [1] { \pjet{p}{#1}                       }
    \newmcommand   \pjetq     [1] { \pjet{q}{#1}                       }
    \newmcommand   \pjeti     [1] { \pjet{i}{#1}                       }
    \newmcommand   \pjetj     [1] { \pjet{j}{#1}                       }
    \newmcommand   \pt        [1] { \Lambda_{#1}                       }
    \newmcommand   \ept       [1] { \overline{\Lambda}_{#1}            }
    \newmcommand   \bpt       [2] { \Lambda_{#1,#2}                    }
    \newmcommand   \ptr       [0] { \pt{r}                             }
    \newmcommand   \eptr      [0] { \ept{r}                            }
    \newmcommand   \bptrn     [0] { \bpt{r}{n}                         }
    \newmcommand   \orb       [1] { \mathcal{C}_{#1}                   }
    \newmcommand   \torb      [2] { \mathcal{C}_{#1,#2}                }
    \newmcommand   \cont      [2] { \operatorname{Cont}^{#1}( #2 )     }
    \newmcommand   \contp     [1] { \cont{p}{#1}                       }
    \newmcommand   \dis       [2] { k_{#1}(#2)                         }
    \renewmcommand \AA        [0] { \mathbf{A}                         }
    \newmcommand   \PP        [0] { \mathbf{P}                         }
    \newmcommand   \RR        [0] { \mathbf{R}                         }
    \newmcommand   \QQ        [0] { \mathbf{Q}                         }
    \newmcommand   \CC        [0] { \mathbf{C}                         }
    \newmcommand   \ZZ        [0] { \mathbf{Z}                         }
    \newmcommand   \NN        [0] { \mathbf{N}                         }
    \newmcommand   \NNb       [0] { \overline{\mathbf{N}}              }
    \newmcommand   \LL        [0] { \mathbf{L}                         }
    \newmcommand   \II        [0] { \mathcal{I}                        }
    \newmcommand   \OO        [0] { \mathcal{O}                        }
    \newmcommand   \GL        [1] { \operatorname{GL}_{#1}             }
    \newmcommand   \GLr       [0] { \GL{r}                             }
    \newmcommand   \GLs       [0] { \GL{s}                             }
    \newmcommand   \psr       [1] { {#1}[\![t]\!]                      }
    \newmcommand   \tpsr      [2] { {#2}[t]/(t^{#1+1})                 }
    \newcommand    \Mustata   [0] { Musta\c{t}\u{a}\xspace             }
    \DeclareMathOperator \Hom  {Hom}
    \DeclareMathOperator \Spec {Spec}
    \DeclareMathOperator \ord  {ord}
    \DeclareMathOperator \Jac  {Jac}
    \DeclareMathOperator \lct  {lct}
    \theoremstyle{plain}
      \newtheorem{introtheorem}{Theorem}
      \newtheorem{theorem}{Theorem}[section]
      \newtheorem{proposition}[theorem]{Proposition}
      \newtheorem{lemma}[theorem]{Lemma}
      \newtheorem{corollary}[theorem]{Corollary}
    \theoremstyle{definition}
      \newtheorem{definition}[theorem]{Definition}
      \newtheorem{remark}[theorem]{Remark}
      \newtheorem{notation}[theorem]{Notation}
      \newtheorem{example}[theorem]{Example}
    \newcommand\diagramexample{
      \[
         \def\ydinit{\path (0,0) coordinate (P);}
         \def\ydr##1{%
           \path (P) coordinate (Q);
           \foreach \i in {1,...,##1}%
           {%
             \draw (Q) rectangle +(1,-1);%
             \path (Q) +(1,0) coordinate (Q);%
           }%
           \path (P) +(0,-1) coordinate (P);%
         }
         \def\ydi{\ydr 6 
           \draw (Q) -- +(1,0); 
           \draw (Q) +(0,-1) -- +(1,-1);
           \draw (Q) +(1.5,-.45) node {\dots};
         }
         \def\yd##1{\raisebox{-3ex}{\begin{tikzpicture}[scale=.3]
           \ydinit ##1\end{tikzpicture}}}
         (5,4,3,3,2) \quad = \quad 
         \yd{\ydr 2 \ydr 3 \ydr 3 \ydr 4 \ydr 5}
         \qquad\qquad\qquad
         (\infty,\infty,4,2,1) \quad = \quad 
         \yd{\ydr 1 \ydr 2 \ydr 4 \ydi \ydi}
      \]
    }
    \newcommand\partitionsfigure{
      \begin{figure}[ht]
         \def\ydinit{\path (0,0) coordinate (P);}
         \def\ydr##1{%
           \path (P) coordinate (Q);
           \foreach \i in {1,...,##1}%
           {%
             \draw (Q) rectangle +(1,-1);%
             \path (Q) +(1,0) coordinate (Q);%
           }%
           \path (P) +(0,-1) coordinate (P);%
         }
         \def\ydi{\ydr 6 
           \draw (Q) -- +(1,0); 
           \draw (Q) +(0,-1) -- +(1,-1);
           \draw (Q) +(1.5,-.45) node {\dots};
         }
         \def\yd##1{\raisebox{-3ex}{\begin{tikzpicture}[scale=.3]
           \ydinit ##1\end{tikzpicture}}}
        \[
          \begin{array}{lcl}
            \\
            \yd{
              \ydr 2 \ydr 4 \ydr 4 \ydi
              \draw[fill=black!20] (4,-2) rectangle +(1,-1);
              \draw (7,-0) rectangle +(1,-1);
              \draw [<-] (7.5,-0.5) to [out=-90,in=0] (4.5,-2.5);
            } &
            \qquad \qquad \qquad \qquad &
            \yd{
              \fill[fill=black!20] (4,-2) rectangle +(3,-1);
              \ydr 2 \ydr 4
              \ydi \ydi 
              \draw (7,-0) rectangle ++(1,-1) rectangle ++(1,1);
              \draw (9,-0) -- (10,-0); \draw (9,-1) -- (10,-1);
              \draw (10.5,-0.55) node {\dots};
              \draw [<-] (7.5,-0.5) to [out=180,in=90] (4.5,-2.5);
            } \\\\
            \text{Single removal}&&
            \text{Infinite removal}
            \\\\
            \yd{
              \fill[fill=black!20] (1,-2) rectangle +(1,-1);
              \ydr 1 \ydr 1 \ydr 2 \ydr 5 \ydr 6
              \draw [<-,rounded corners=2pt] 
                (4.5,-3.5) -- (4.5, -2.5) -- (1.5,-2.5);
            }&
            \qquad \qquad \qquad \qquad &
            \yd{
              \fill[fill=black!20] (2,-1) rectangle +(1,-1);
              \ydr 2 \ydr 3 \ydr 3 \ydr 3 \ydr 4 \ydr 5
              \draw [<-,rounded corners=2pt] 
                (3.5,-4.5) -- (3.5, -1.5) -- (2.5,-1.5);
            }\\\\
            \text{Slip}&&
            \text{Fall}
          \end{array}
        \]
      \end{figure}
    }
    \def\RCS$#1: #2 ${\expandafter\def\csname RCS#1\endcsname{#2}}
    \def\RCSdd|#1#2#3#4#5#6#7#8#9|{%
      \begingroup %
      \day #9 %
      \month #6#7 %
      \year #1#2#3#4  %
      \xdef\RCSDate{\today} %
      \endgroup %
      }
    \def\RCSd$#1: #2 #3:#4:#5 ${%
      \RCSdd|#2| %
      \def\RCSTime{#3:#4} %
      }
\begin{document}



  \title     {Arcs on Determinantal Varieties}
  \author    {Roi Docampo}
  \address   {Department of Mathematics,
              University of Utah,
              155 S 1400 E RM 233,
              Salt Lake City, UT 84112, USA} 
  \email     {docampo@math.utah.edu}
  \subjclass {14E18}
  \keywords  {Arc spaces, jet schemes, determinantal varieties, topological zeta function}

  \mydate

  \maketitle

  \begin{abstract} 
  We study arc spaces and jet schemes of generic determinantal varieties.
  Using the natural group action, we decompose the arc spaces into orbits, and
  analyze their structure. This allows us to compute the number of irreducible
  components of jet schemes, log canonical thresholds, and topological zeta
  functions. 
  \end{abstract}

\section*{Introduction}

  Let $\M=\AA^{r s}$ denote the space of $r\times s$ matrices, and assume that
  $r \le s$. Let $\detvark \subset \M$ be the \emph{generic determinantal
  variety} of rank $k$, that is, the subvariety of \M whose points correspond
  to matrices of rank at most $k$. The purpose of this paper is to analyze the
  structure of arc spaces and jet schemes of generic determinantal varieties.

  Arc spaces and jet schemes have attracted considerable attention in recent
  years.  They were introduced to the field by J.\ F.\ Nash \cite{Nas95}, who
  noticed for the fist time their connection with resolution of singularities.
  A few years later, M.\ Kontsevich introduced motivic integration
  \cite{Kon95, DL99}, popularizing the use of the arc space.  And starting
  with the work of M.$\,\!$\Mustata, arc and jets have become a standard tool in
  birational geometry, mainly because of their role in formulas for
  controlling discrepancies \cite{Mus01, Mus02, EMY03, ELM04, EM06, dFEI08}.
  
  But despite their significance from a theoretical point of view, arc spaces
  are often hard to compute in concrete examples. The interest in Nash's
  conjecture led to the study of arcs in isolated surface singularities
  \cite{LJ90, Nob91, LJR98, Ple05, PPP06, LJR08}. Quotient singularities are
  analyzed from the point of view of motivic integration in \cite{DL02}. We
  also understand the situation for monomial ideals \cite{GS06, Yue07b} and
  for toric varieties \cite{Ish04}. But beyond these cases very little is
  known about the geometry of the arc space of a singular algebraic variety.
  The purpose of this article is to analyze in detail the geometric structure
  of arc spaces and jet schemes of generic determinantal varieties, giving a
  new family of examples for which the arc space is well understood.

  Recall that arcs and jets are higher order analogues of tangent vectors.
  Given a variety $X$ defined over \CC, an arc of $X$ is a \psr\CC-valued
  point of $X$, and an $n$-jet is a $\tpsr n\CC$ -valued point. A $1$-jet is
  the same as a tangent vector.  Just as in the case of the tangent space,
  arcs on $X$ can be identified with the closed points of a scheme \arc X,
  which we call the \emph{arc space} of $X$ \cite{Nas95,Voj07}.  Similarly,
  $n$-jets give rise to the \emph{$n$-th jet scheme} of $X$, which we denote
  by \jetn X (see Section \ref{sec:introarcs} for more details).

  For the space of matrices $\M$, the arc space \arc\M and the jet scheme
  \jetn\M can be understood set-theoretically as the spaces of matrices with
  entries in the rings \psr\CC and $\tpsr n\CC$, respectively. \arc\detvark
  and \jetn\detvark are contained in \arc\M and \jetn\M, and their equations
  are obtained by ``differentiating'' the $k \times k$ minors of a matrix of
  independent variables. We approach the study of \arc\detvark and
  \jetn\detvark with three goals in mind: understand the \emph{topology} of
  \jetn\detvark, compute \emph{log canonical thresholds} for the pairs $(\M,
  \detvark)$, and compute \emph{topological zeta functions} for $(\M,
  \detvark)$.

  \subsection{Irreducible components of jet schemes}

  The topology of the jet schemes \jetn\detvark is intimately related to the
  \emph{generalized Nash problem}.

  Given an irreducible family of arcs $\C \subset \arc\M$, we can consider
  $\nu_\C$, the order of vanishing along a general element of $\C$. The
  function $\nu_\C$ is almost a discrete valuation, the only problem being
  that it takes infinite value on those functions vanishing along all the arcs
  in $\C$. If there are no such functions, we call the family \emph{fat}
  \cite{Ish06}, and we see that irreducible fat families of arcs give rise to
  discrete valuations.

  Conversely, given a divisorial valuation $\nu$ over $\M$, any isomorphism
  from $\psr\CC$ to the completion of the valuation ring produces a non-closed
  point of $\arc \M$.  The closure of any these points can be easily seen to
  give an irreducible fat family of arcs inducing $\nu$.

  Among all closed irreducible fat families of arcs inducing a given
  divisorial valuation, there exists a maximal one with respect to the
  order of containment, known as the \emph{maximal divisorial set} (see
  Section \ref{sec:introarcs} for details). In this way we get a bijection
  between divisorial valuations and maximal divisorial sets in the arc
  space, and we can use the topology in the arc space to give structure to
  the set of divisorial valuations. More concretely, the containment of
  maximal divisorial sets induces a partial order on valuations. The
  understanding of this order is known as the \emph{generalized Nash
  problem} \cite{Ish06}.

  There are other ways to define orders in the set of divisorial valuations.
  For example, thinking of valuations as functions on $\OO_\M$, we can
  partially order them by comparing their values. In dimension two, the
  resolution process also gives an order. It can be shown that the order
  induced by the arc space is different from any previously known order
  \cite{Ish06}, but beyond that, not much is known about the generalized Nash
  problem. A notable exception is the case of of toric valuations on toric
  varieties, which was studied in detail in \cite{Ish04}.


  Determining the irreducible components for \jetn\detvark is essentially
  equivalent to computing minimal elements among those valuations over \M that
  satisfy certain contact conditions with respect to \detvark. In
  Section~\ref{orbposet} we solve the generalized Nash problem for
  \emph{invariant} divisorial valuations, and use this to prove the following
  theorem.

  \begin{introtheorem}\label{introthm1}
    Let \detvark be the determinantal variety of matrices of size $r\times s$
    and rank at most $k$, where $k < r\le s$.  Let $\jetn\detvark$ be the $n$-th
    jet scheme of \detvark.  If $k=0$ or $k=r-1$, the jet scheme \jetn\detvark
    is irreducible. Otherwise the number of irreducible components of
    $\jetn\detvark$ is
    \[
      n+2 - \left\lceil \frac{n+1}{k+1} \right\rceil.
    \]
  \end{introtheorem}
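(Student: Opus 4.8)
The plan is to translate the statement into a finite combinatorial problem in the orbit poset of \arc\detvark\ constructed in Section~\ref{orbposet}, and then count. Recall that the orbits of $\GLr(\psr\CC)\times\GLs(\psr\CC)$ acting on \arc\M\ are indexed by the elementary divisors of an arc regarded as a matrix over $\psr\CC$, i.e.\ by partitions $\lambda=(\lambda_1\ge\dots\ge\lambda_r)$ with entries in $\NN\cup\{\infty\}$, and that Section~\ref{orbposet} describes the closure order on these orbits via the slip, fall, single removal and infinite removal moves. The one extra ingredient needed is the contact order of an orbit $\orb\lambda$ with \detvark: a direct computation with the minors of a diagonal matrix shows that for every $\gamma\in\orb\lambda$ the order $\ord_\gamma(\II_{\detvark})$ equals the sum of the $k+1$ smallest parts of $\lambda$ (in particular it is finite exactly when fewer than $k+1$ parts are infinite, i.e.\ exactly when $\orb\lambda\not\subset\arc\detvark$).

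Next I would set up the dictionary with jet schemes. Since \M\ is smooth, $\psi_n\colon\arc\M\to\jetn\M$ is surjective; and because any $n$-jet of \detvark\ lifts to an arc of \M\ whose $(k+1)$-minors have order $\ge n+1$, one gets $\jetn\detvark=\psi_n(Z)$ as sets, where $Z\subset\arc\M$ is the union of the orbits $\orb\lambda$ on which $\ord_\gamma(\II_{\detvark})\ge n+1$, i.e.\ those with (sum of the $k+1$ smallest parts of $\lambda$) $\ge n+1$. As the acting group is connected, every irreducible component of \jetn\detvark\ is the closure of $\psi_n(\orb\lambda)$ for some $\orb\lambda\subset Z$, and the components are exactly the maximal members of $\{\overline{\psi_n(\orb\lambda)} : \orb\lambda\subset Z\}$ under inclusion. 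So the problem becomes: count the orbits in $Z$ that are maximal for the order induced on ``$n$-truncated orbits'', that is, the order obtained from the closure order of Section~\ref{orbposet} after replacing each part $\lambda_i$ by $\min(\lambda_i,n+1)$.

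To identify these maximal orbits I would argue as follows. A maximal $\orb\lambda$ must meet the contact constraint with equality, so the $k+1$ smallest parts of $\lambda$ sum to exactly $n+1$; moreover every part larger than the $(k+1)$-st smallest must be pushed down to equal it, since enlarging such a part only shrinks the orbit. Writing $q$ for that common value, the remaining data is a partition of $n+1-q$ into at most $k$ parts, each at most $q$, which exists precisely when $\lceil\frac{n+1}{k+1}\rceil\le q\le n+1$. Using the slip and fall moves of Section~\ref{orbposet} one checks that for each admissible $q$ exactly one $n$-truncated orbit is maximal, and that different values of $q$ give incomparable orbits; counting the admissible $q$ gives $n+1-\lceil\frac{n+1}{k+1}\rceil+1=n+2-\lceil\frac{n+1}{k+1}\rceil$, which is the asserted number of components when $0<k<r-1$. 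If $k=0$ then \detvark\ is a point and the statement is trivial; if $k=r-1$ there are no parts beyond the $(k+1)$-st, the choice of $q$ disappears along with them, and a single maximal orbit remains (consistently with \detvar{r-1} having rational singularities, which likewise forces its jet schemes to be irreducible).

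The main obstacle is this last identification --- controlling how the orbit closures of Section~\ref{orbposet} behave under the truncation $\psi_n$. Since the order on $n$-truncated orbits is strictly coarser than the closure order on \arc\M, one must rule out containments $\overline{\psi_n(\orb\lambda)}\subset\overline{\psi_n(\orb\mu)}$ arising from orbits that are incomparable upstairs, and must verify that for each $q$ the competing partitions of $n+1-q$ really do collapse under slip and fall to one maximal orbit rather than producing several. This is exactly where the explicit covering relations and the contact-order formula have to be combined in a careful specialization-and-dimension analysis; granting it, the count above is immediate and the cases $k=0$ and $k=r-1$ are straightforward.
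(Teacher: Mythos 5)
Your strategy is the same as the paper's in outline---translate to the orbit poset, identify minimal partitions satisfying the contact condition, and count them---but your proposal has two genuine gaps, one of which you introduce yourself by choosing the wrong direction to pass between $\arc\M$ and $\jetn\M$.

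First, you push the analysis \emph{downstairs}: you write $\jetn\detvark=\psi_n(Z)$ and then propose to sort out the closure order on the truncated orbits $\psi_n(\orb\lambda)$, correctly noting at the end that you would need to rule out containments among truncations of orbits that are incomparable upstairs. You never close this loop, and it is avoidable. The paper instead goes \emph{upstairs}: because $\M$ is smooth, $\psi_n\colon\arc\M\to\jetn\M$ is surjective, and $\cont{n+1}\detvark=\psi_n^{-1}(\jetn\detvark)$; thus the irreducible components of $\jetn\detvark$ are in bijection with those of $\cont{n+1}\detvark$ (Corollary \ref{irrcomp:main}). In $\arc\M$ the orbit--partition dictionary of Theorem \ref{orbposet:main} applies verbatim, and no truncation order ever has to be compared. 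Your worry about the order ``becoming strictly coarser'' downstairs can in fact be defused (Proposition \ref{orbdec:trunc} and the fibration structure of $\psi_n$ make truncation compatible with closures), but this requires an argument you do not give, and there is simply no need to put yourself in that position.

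Second, and more substantively, you state but do not prove the key combinatorial step: that for each admissible $q$ there is \emph{exactly one} minimal partition with $\lambda_{r-k}=q$ and $\lambda_{r-k}+\dots+\lambda_r=n+1$, and that different values of $q$ give incomparable partitions. You write ``one checks using slip and fall moves'' and ``granting it, the count above is immediate,'' which is exactly the content of Proposition \ref{jetcomp:cont} in the paper and is where the real work is. The paper pins down the unique minimal partition as $(d^{a+r-k}\,e^1)$ with $p=(a+1)d+e$, $0\le e<d$, by arguing that (i) minimal elements must be partitions, (ii) the first $r-k$ parts must all equal $\lambda_{r-k}$, (iii) the tail sum must equal $p$ exactly, (iv) at most one of the tail parts can be strictly between $0$ and $d$ (else a slip/fall produces something smaller), and (v) the assumption $0<k<r-1$ forces partitions with different $\lambda_{r-k}$ to be incomparable (the head rules out one direction, the tail the other). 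You sketch (ii) and (iii) but not (iv) or (v); without (iv) you cannot conclude uniqueness of the maximal orbit per $q$, and without (v) you cannot conclude that distinct $q$ yield distinct components. So the numerology $n+2-\lceil\tfrac{n+1}{k+1}\rceil$ is correctly guessed, but the argument supporting it has not been supplied.
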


  Jet schemes for determinantal varieties were previously studied in
  \cite{KS05a, KS05b, Yue07a}. Up to now, the approach has always been to use
  techniques from commutative algebra, performing a careful study of the
  defining equations. This has been quite successful for ranks $1$ and $r-1$,
  especially for square matrices, but the general case seems too complex for
  these methods.

  Our approach is quite different in nature: we focus on the natural group
  action. This is a technique that already plays a central role in Ishii's
  study of the arc spaces of toric varieties \cite{Ish04}. Consider the group
  $G = \GLr \times \GLs$, which acts on the space of matrices \M via change of
  basis. The rank of a matrix is the unique invariant for this action, the
  orbit closures being precisely the determinantal varieties \detvark. The
  assignments sending a variety $X$ to its arc space \arc X and its jet
  schemes \jetn X are functorial. Since $G$ is an algebraic group and its
  action on \M is rational, we see that \arc G and \jetn G are also groups,
  and that they act on \arc \M and \jetn \M, respectively.  Determinantal
  varieties are $G$-invariant, hence their arc spaces are $\arc G$-invariant
  and their jet schemes are $\jetn G$-invariant. The main observation is that
  most questions regarding components and dimensions of jet schemes and arc
  spaces of determinantal varieties can be reduced to the study of orbits in
  \arc \M and \jetn \M.

  Orbits in the arc space \arc \M are easy to classify. As a set, \arc \M is
  just the space of matrices with coefficients in \psr\CC, and \arc G acts via
  change of basis over the ring \psr\CC. Gaussian elimination allows us to
  find representatives for the orbits: each of them contains a unique diagonal
  matrix of the form $\operatorname{diag} (t^{\lambda_1}, \dots,
  t^{\lambda_r})$, where $\infty \ge \lambda_1 \ge \dots \ge \lambda_r \ge 0$,
  and the sequence $\lambda = (\lambda_1, \dots, \lambda_r)$ determines the
  orbit. In Section \ref{orbdec} we see how to decompose arc spaces and jet
  schemes of determinantal varieties as unions of these orbits. Once this is
  done, the main difficulty to determine irreducible components is the
  understanding of the generalized Nash problem for orbits closures in \arc\M.
  This is the purpose of the following theorem, which is proven in the article
  as Theorem \ref{orbposet:main}.
  
  \begin{introtheorem}[Nash problem for invariant valuations]
    Consider two sequences $\lambda = (\lambda_1 \ge \dots \ge \lambda_r \ge
    0)$ and $\lambda' = (\lambda'_1 \ge \dots \ge \lambda'_r \ge 0)$, and let
    \orb\lambda and \orb{\lambda'} be the corresponding orbits in the arc
    space \arc\M. Then the closure of \orb\lambda contains \orb{\lambda'} if
    and only if
    \[
      \lambda_r + \lambda_{r-1} + \dots + \lambda_{r-k}
      \quad
      \le
      \quad
      \lambda'_r + \lambda'_{r-1} + \dots + \lambda'_{r-k}
      \qquad
      \forall k \in \{0,\dots,r\}.  
    \]
  \end{introtheorem}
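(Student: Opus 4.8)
The numbers in the statement have a geometric meaning which drives both directions. For $j\in\{1,\dots,r\}$ write $d_j(\lambda)=\lambda_r+\lambda_{r-1}+\dots+\lambda_{r-j+1}$ for the sum of the $j$ smallest entries of $\lambda$, so that the asserted condition reads $d_j(\lambda)\le d_j(\lambda')$ for $j=1,\dots,r$ (the $j=r$ instance coincides with $j=r-1$). For a matrix $A$ over $\psr\CC$ the $j\times j$ minors generate an ideal $I_j(A)\subseteq\psr\CC$, necessarily of the form $(t^{\delta_j(A)})$ with $\delta_j(A)\in\{0,1,\dots\}\cup\{\infty\}$ since $\psr\CC$ is a DVR; by Cauchy--Binet $I_j$ is invariant under $\GLr\times\GLs$, so $\delta_j$ is constant on $\arc G$-orbits, and evaluating on the diagonal representative $\operatorname{diag}(t^{\lambda_1},\dots,t^{\lambda_r})$ gives $\delta_j\equiv d_j(\lambda)$ on $\orb\lambda$.

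For the ``only if'' direction I would use upper semicontinuity of the $\delta_j$: for finite $m$ the set $\{\delta_j\ge m\}$ is the common zero locus of the Taylor coefficients of order $<m$ of all $j\times j$ minors of the tautological arc, hence closed, and $\{\delta_j=\infty\}$ is closed as well; therefore $\overline{\orb\lambda}\subseteq\bigcap_j\{\delta_j\ge d_j(\lambda)\}$, and reading this off a point of $\orb{\lambda'}$ yields $d_j(\lambda')\ge d_j(\lambda)$ for all $j$. Since $\arc\M$ is not of finite type one should phrase this either on each finite level $\jetn\M$, where orbit closures are classical, or directly in the Zariski topology of $\arc\M=\varprojlim\jetn\M$; this is routine but must be addressed explicitly.

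For the ``if'' direction the plan is to join $\orb{\lambda'}$ to $\orb\lambda$ by a chain of one-parameter degenerations. It is enough to produce a sequence $\lambda=\mu^{(0)},\dots,\mu^{(N)}=\lambda'$ each step of which is one of four \emph{elementary moves}, and for each such move $\mu\to\mu'$ an explicit family over $\AA^1$ whose general fibre lies in $\orb\mu$ and whose special fibre lies in $\orb{\mu'}$; then $\orb{\mu'}\subseteq\overline{\orb\mu}$, and since $\overline{\orb\mu}$ is $\arc G$-invariant the chain gives $\orb{\lambda'}\subseteq\overline{\orb\lambda}$. The moves and their realizations are: a \emph{single addition} $\mu_i\mapsto\mu_i+1$ (allowed when $\mu_{i-1}>\mu_i$), obtained by replacing the entry $t^{\mu_i}$ of the diagonal representative with $t^{\mu_i+1}+u\,t^{\mu_i}$; an \emph{infinite addition} replacing the largest finite entry $\mu_i$ by $\infty$, obtained by replacing $t^{\mu_i}$ with $u\,t^{\mu_i}$ (special fibre: that entry is $0$); and a \emph{slip} or \emph{fall}, each moving one box from a row $i$ to a lower row $j+1$ with $\mu_i\ge\mu_{j+1}+2$ and $\mu_{i+1}=\dots=\mu_j$, obtained by replacing the $2\times2$ block $\operatorname{diag}(t^{\mu_i},t^{\mu_{j+1}})$ on those rows and columns by
\[
  \begin{pmatrix} u\,t^{\mu_{j+1}} & t^{\mu_{j+1}+1}\\ -\,t^{\mu_i-1} & 0 \end{pmatrix}.
\]
In every case computing the $\delta_k$ of the family (a one-line calculation of orders of $1\times1$ and $2\times2$ minors) shows that the general fibre lies in $\orb\mu$ and the special fibre in $\orb{\mu'}$.

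The substantive part — and the step I expect to be the main obstacle — is the combinatorial lemma that the order ``$d_j(\lambda)\le d_j(\lambda')$ for all $j$'' is generated by these moves: whenever $\lambda\prec\lambda'$ some elementary move carries $\lambda$ to a $\mu$ with $\lambda\prec\mu\preceq\lambda'$. Setting $e_l=\lambda_{r+1-l}$ (increasing) and $S_j=e_1+\dots+e_j$, so that $\preceq$ reads $S_j\le S_j'$, the recipe is: if $\lambda$ has fewer $\infty$'s than $\lambda'$, perform an infinite addition on the largest finite entry (one checks this stays $\preceq\lambda'$, using that $\lambda'$ has at least that many infinite entries); otherwise work among the finite entries, take a maximal block of indices on which $S_j<S_j'$, and inside it either add a box at the top of a flat stretch (if the total of the finite part must grow) or move one box between the two extreme ``cliffs'' of $e(\lambda)$ in that block — a slip if this is a single cliff of height $\ge2$, a fall if it straddles a flat stretch. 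The apparently dangerous configuration, a lone cliff of height exactly $1$, is excluded by monotonicity of $e(\lambda')$, which on such a block would have to lie both above and below an impossible value. Each move strictly increases either the number of $\infty$'s or $\sum_j S_j$ over the finite range, both bounded in terms of $\lambda'$, so the process terminates, necessarily at $\lambda'$. Making this case analysis airtight — especially the interaction of the infinite additions with the others, which is exactly why a naive ``covering relations'' argument fails (for $\lambda'$ with strictly more $\infty$'s the interval from $\lambda$ to $\lambda'$ in $\preceq$ contains infinite chains) — is the technical core of the proof.
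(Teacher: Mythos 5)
Your plan mirrors the paper's proof of Theorem~\ref{orbposet:main} in both directions. The ``only if'' half is the same argument dressed differently: the order of the ideal of $j\times j$ minors along an arc is an orbit invariant, and the locus where it is $\ge m$ is the contact locus $\operatorname{Cont}^m(\detvar{j-1})$, which is closed in $\arc\M$ (this is Proposition~\ref{orbposet:domimpsub}); there is no need to descend to finite jet levels, since contact loci are cylinders and cylinders are closed in $\arc\M$. For the ``if'' half, your one-parameter families over $\AA^1$ with parameter $u$ are exactly the paper's \emph{wedges} over $\Spec\CC[\![s,t]\!]$ (Definition~\ref{wedge}, Lemmas~\ref{lem:removals} and~\ref{lem:falls}); the $2\times 2$ block you write down differs from the one in Lemma~\ref{lem:falls}, but a short Smith normal form computation confirms it realizes the same degeneration, so this is a cosmetic variation. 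You should, however, note the extra compatibility constraints on a slip/fall (e.g.\ $\mu_i-1\ge\mu_{i+1}$ and $\mu_{j+1}+1\le\mu_j$) so that the special fibre is again in normal form; the paper's adjacency conditions encode exactly these.

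The genuine gap is the one you flag: the combinatorial lemma that co-domination in $\eptr$ is generated by removals, infinite removals, slips, and falls. Your sketch (block decomposition, ``cliff'' analysis) is not carried to completion, and the subtlety you correctly identify --- that an argument through covering relations cannot work because intervals $[\lambda,\lambda']$ in which $\lambda'$ has strictly more infinite terms contain infinite chains --- is exactly what the paper's Theorem~\ref{th-adjacencies} is engineered to circumvent. The paper first performs finitely many infinite removals to equalize the number of $\infty$'s (and these intermediate steps are \emph{not} claimed to be adjacencies); once the infinite parts agree, the comparison lives in a \emph{finite} poset of ordinary partitions, and there one simply invokes de Concini--Eisenbud--Procesi \cite[Prop.~1.2]{dCEP80}, which classifies the adjacencies under (co)domination as precisely simple removals, slips, and falls. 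Citing that result is what closes the argument; reproving it from scratch, as your sketch begins to do, would require an airtight case analysis that you have not carried out, and it is the genuinely hard part of the theorem.
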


  Sequences of the form $(\infty \geq \lambda_1 \geq \lambda_2 \geq \dots \geq
  \lambda_r \geq 0)$ are closely related to partitions (the only difference
  being the possible presence of infinite terms), and the order that appears
  in the above theorem is a modification of a well-known order on the set of
  partitions: the order of domination (see Section \ref{sec:partitions}).
  Since the poset of partitions is well understood, one has very explicit
  information about the structure of the poset of orbits in the arc space.
  This allows us to compute minimal elements among some interesting families
  of orbits, leading to the proof of Theorem \ref{introthm1} (see Section
  \ref{orbposet}).

  \subsection{Log canonical thresholds}

  \Mustata's formula \cite{Mus01, ELM04, dFEI08} allows us to compute log
  discrepancies for divisorial valuations by computing codimensions of the
  appropriate sets in the arc space. In the case at hand, the most natural
  valuations one can look at are the invariant divisorial valuations. In
  Section \ref{discreps} we see that the maximal divisorial sets corresponding
  to these valuations are precisely the orbit closures in \M. Hence computing
  log discrepancies gets reduced to computing codimensions of orbits. This
  explains the relevance of the following result, which appears in Section
  \ref{discreps} as Proposition \ref{discreps:orbcodim}.

  \begin{introtheorem}[Log discrepancies of invariant valuations]
    Consider a sequence $\lambda = (\lambda_1 \ge \dots \ge \lambda_r \ge 0)$
    and let \orb\lambda be the corresponding orbit in the arc space \arc\M.
    Then the codimension of \orb\lambda in \arc\M is
    \[ 
      \operatorname{codim}(\orb\lambda, \arc\M) =
      \sum_{i=1}^r \lambda_i (s-r+2i-1).
    \]
  \end{introtheorem}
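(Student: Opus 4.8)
The plan is to compute the codimension of the orbit $\orb\lambda$ directly by realizing it as the image of a nice map and tracking dimensions level by level in the jet schemes. Since $\orb\lambda$ is the $\arc G$-orbit of the diagonal arc $\gamma_\lambda = \operatorname{diag}(t^{\lambda_1},\dots,t^{\lambda_r})$, and $\arc G = \arc{\GLr}\times\arc{\GLs}$ acts by left and right multiplication, the orbit is the image of the action map $a\colon \arc G \to \arc\M$, $(\,g,h\,)\mapsto g\,\gamma_\lambda\,h^{-1}$. The codimension of $\orb\lambda$ in $\arc\M$ equals the codimension of the analogous orbit in a sufficiently high jet scheme $\jetn\M$ (for $n$ large compared to all finite $\lambda_i$), so I would work in $\jetn\M \cong \AA^{rs(n+1)}$ and compute $\dim \jetn\M - \dim \orb{\lambda,n}$. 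Here $\dim\orb{\lambda,n} = \dim\jetn G - \dim \operatorname{Stab}$, so everything reduces to computing the dimension of the stabilizer of $\gamma_\lambda$ in $\jetn G$ (equivalently, in $\arc G$, truncated).

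The key computation is therefore the stabilizer. A pair $(g,h)\in\arc G$ fixes $\gamma_\lambda$ iff $g\,\gamma_\lambda = \gamma_\lambda\,h$, i.e. $g_{ij}\,t^{\lambda_j} = t^{\lambda_i}\,h_{ij}$ for all $i,j$, which forces $g_{ij} \equiv 0 \pmod{t^{\max(\lambda_i-\lambda_j,0)}}$ on the $\GLr$-side and an analogous condition relating $h$ to $g$. Counting the number of ``free'' power-series coefficients: on the $\GLr$ factor, the $(i,j)$ entry of $g$ contributes $n+1-\max(\lambda_i-\lambda_j,0)$ free coefficients (truncating at level $n$), and on the $\GLs$ factor the entries $h_{ij}$ with $i\le r$ are determined by $g$, while the remaining $s-r$ rows of $h$ are free; a careful bookkeeping of these constraints against $\dim\jetn G = (r^2+s^2)(n+1)$ should collapse to the stated sum. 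The cleanest way to organize this is: $\operatorname{codim}(\orb\lambda,\arc\M) = \dim\jetn\M - \dim\jetn G + \dim\operatorname{Stab}_n$ for $n\gg 0$, and then verify that the right-hand side equals $\sum_i \lambda_i(s-r+2i-1)$. The coefficient $s-r+2i-1$ has a transparent origin: sorting $\lambda_1\ge\dots\ge\lambda_r$, the entry $\lambda_i$ incurs constraints from the $s-r$ ``extra'' columns, from the $r-i$ strictly smaller (or equal, later) diagonal entries via the $\GLr$-side, and from the $i-1$ larger ones via the $\GLs$-side, plus a symmetric contribution — totalling $(s-r)+2(i-1)+1 = s-r+2i-1$ per unit of $\lambda_i$.

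I would then clean up the two edge cases. First, the reduction to jet schemes: one must check that $\arc\M \setminus \jetn\M$-style truncation does not lose codimension, which follows because $\orb\lambda$ is (the closure being) a cylinder when all $\lambda_i$ are finite, and when some $\lambda_i=\infty$ the formula correctly gives codimension $\infty$ (the orbit lies in a proper infinite-codimension subset, namely an arc space of a lower determinantal variety pulled back appropriately) — so I would state the finite case as the substance and note the infinite case is degenerate. Second, I should confirm the action map $a$ has connected fibers / that the orbit is irreducible of the expected dimension, which is standard since $\arc G$ is irreducible (as $G$ is a connected smooth group, its jet schemes are irreducible).

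The main obstacle I anticipate is the stabilizer dimension count: the conditions $g\gamma_\lambda = \gamma_\lambda h$ couple the two factors $\GLr$ and $\GLs$ in a way that is easy to get wrong when several $\lambda_i$ coincide (so the truncation orders $\max(\lambda_i-\lambda_j,0)$ cluster) or when some $\lambda_i = 0$ (no constraint) versus $\lambda_i=\infty$. Getting the combinatorics of ``which coefficients of $g$ and $h$ are free'' exactly right — and matching it against $\dim\jetn G$ without off-by-one errors in the $n+1$ bookkeeping — is the delicate part; an alternative that sidesteps some of this is an inductive argument peeling off one block at a time (relating $\orb{(\lambda_1,\dots,\lambda_r)}$ to $\orb{(\lambda_1-\lambda_r,\dots,\lambda_{r-1}-\lambda_r,0)}$ inside a smaller matrix space, using that multiplying by $t^{\lambda_r}$ shifts everything), but I would first attempt the direct count since the target formula is so clean.
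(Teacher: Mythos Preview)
Your approach is correct and essentially identical to the paper's: reduce to a finite jet level via the cylinder property, then compute the dimension of the stabilizer of $\delta_\lambda$ in $\jetn G$ by analyzing the matrix equation $g\,\delta_\lambda = \delta_\lambda\,h$ entry by entry. The paper organizes the resulting count by grouping the $(i,j)$-entries according to the value of $\min(\lambda_i,\lambda_{j-s+r})$ --- these form nested \reflectbox{L}-shaped regions in the $r\times s$ matrix, the $k$-th of which contains exactly $s-r+2k-1$ entries --- which immediately produces the coefficient you are after and sidesteps the off-by-one bookkeeping you flag as the main obstacle.
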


  Once these codimensions are known, one can compute log canonical thresholds for
  pairs involving determinantal varieties. The following result appears in
  Section \ref{discreps} as Theorem \ref{dicreps:lct}.

  \begin{introtheorem}
    Let \M be the space of matrices of size $r \times s$, and \detvark the
    subvariety of \M containing matrices of rank at most $k$. The log
    canonical threshold of the pair $(\M, \detvark)$ is
    \[
      \lct (\M, \detvark) =
      \min_{i=0 \dots k} \frac{(r-i)(s-i)}{k+1-i}.
    \]
  \end{introtheorem}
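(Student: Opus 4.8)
The plan is to combine \Mustata's formula with the orbit decomposition of the arc space and the codimension formula of Proposition~\ref{discreps:orbcodim}. Recall that \Mustata's formula \cite{Mus01, ELM04, dFEI08} expresses the log canonical threshold as
\[
  \lct(\M, \detvark) \;=\; \min_{p \ge 1} \frac{\operatorname{codim}\big(\contp{\detvark},\, \arc\M\big)}{p},
\]
where $\contp{\detvark}$ is the locus of arcs $\gamma \in \arc\M$ with $\ord_\gamma(\detvark) \ge p$. The first step is therefore to compute $\ord_\gamma(\detvark)$ for $\gamma$ in an orbit $\orb\lambda$, with $\lambda = (\lambda_1 \ge \dots \ge \lambda_r \ge 0)$. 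Since $\detvark$ is $G$-invariant, this number is constant along $\orb\lambda$, so it may be read off the diagonal representative $\operatorname{diag}(t^{\lambda_1},\dots,t^{\lambda_r})$, padded by $s-r$ zero columns. The ideal of $\detvark$ is generated by the $(k+1)\times(k+1)$ minors, and on this matrix every such minor either vanishes (if a zero column is used, or if the chosen row and column index sets differ) or equals $t^{\sum_{i\in I}\lambda_i}$ for a size-$(k+1)$ subset $I \subseteq \{1,\dots,r\}$, the minimal exponent occurring at $I = \{r-k,\dots,r\}$. Hence
\[
  \ord_\gamma(\detvark) \;=\; \lambda_r + \lambda_{r-1} + \dots + \lambda_{r-k},
\]
which is exactly the quantity (for the index $k$) governing the order relation in Theorem~\ref{orbposet:main}.

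Using the orbit decomposition of Section~\ref{orbdec}, it follows that $\contp{\detvark}$ is the union of the orbits $\orb\lambda$ with $\lambda_r + \dots + \lambda_{r-k} \ge p$; by Theorem~\ref{orbposet:main} this locus is closed (if it contains an orbit it contains its closure, since passing to a smaller orbit only increases $\lambda_r + \dots + \lambda_{r-k}$), so its codimension is the minimum of $\operatorname{codim}(\orb\lambda, \arc\M)$ over such $\lambda$ — a genuine minimum, since codimensions are non-negative integers, and orbits indexed by sequences with an infinite entry have infinite codimension and may be discarded. Substituting Proposition~\ref{discreps:orbcodim} and minimizing over $p$ as well — for fixed $\lambda$ the ratio $\operatorname{codim}(\orb\lambda)/p$ is smallest at $p = \lambda_r + \dots + \lambda_{r-k}$ — reduces the theorem to the combinatorial identity
\[
  \lct(\M, \detvark) \;=\; \min\left\{\,\frac{\sum_{i=1}^{r}\lambda_i\,(s-r+2i-1)}{\lambda_r + \lambda_{r-1} + \dots + \lambda_{r-k}} \;:\; \lambda_1 \ge \dots \ge \lambda_r \ge 0 \text{ integers},\ \lambda_r + \dots + \lambda_{r-k} \ge 1\,\right\}.
\]

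To evaluate this, write each admissible $\lambda$ as a non-negative integer combination $\lambda = \sum_{j=1}^{r} a_j v^{(j)}$ of the sequences $v^{(j)} = (\underbrace{1,\dots,1}_{j},0,\dots,0)$, so that the numerator $f(\lambda) = \sum_i \lambda_i(s-r+2i-1)$ and denominator $g(\lambda) = \lambda_r + \dots + \lambda_{r-k}$ satisfy $f(\lambda) = \sum_j a_j f(v^{(j)})$ and $g(\lambda) = \sum_{j \ge r-k} a_j g(v^{(j)})$, the latter because $g$ vanishes on $v^{(j)}$ for $j < r-k$. Since $f(v^{(j)}) \ge 0$ for every $j$, dropping the terms with $j < r-k$ from the numerator and applying the elementary mediant inequality gives $f(\lambda)/g(\lambda) \ge \min_{r-k \le j \le r} f(v^{(j)})/g(v^{(j)})$, with equality at $\lambda = v^{(j)}$. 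A direct computation then yields $f(v^{(j)}) = \sum_{i=1}^{j}(s-r+2i-1) = j(s-r+j)$ and $g(v^{(j)}) = j-(r-k)+1$; the substitution $i = r-j$ turns the ratio into $(r-i)(s-i)/(k+1-i)$ with $i$ running over $0,1,\dots,k$, which is the asserted value (note that the term $i=k$ equals $(r-k)(s-k)$, the codimension of $\detvark$ in $\M$, consistently with the general bound $\lct \le \operatorname{codim}$). The main obstacles are the two genuinely new inputs: verifying that $\ord_\gamma(\detvark) = \lambda_r + \dots + \lambda_{r-k}$ and that the contact loci $\contp{\detvark}$ are unions of orbit closures — this is where Theorem~\ref{orbposet:main} enters — after which the remaining optimization is the elementary manipulation sketched above.
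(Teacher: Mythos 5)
Your proposal is correct and follows the paper's argument in all essentials: Mustata's formula, reduction to codimensions of invariant orbits via the orbit decomposition of the contact loci, the codimension formula of Proposition~\ref{discreps:orbcodim}, and the change of variables to the "step" sequences $v^{(j)}$ (whose coefficients $a_j = \lambda_j - \lambda_{j+1}$ are exactly the coordinates $b_j$ in the paper's map $\varphi$). The only stylistic difference is in the final minimization: where the paper passes to real coefficients, identifies the feasible region as a simplex, and locates the minimum of the linear functional $\xi$ at the vertices $P_{r-i}$, you keep everything integral and invoke the mediant inequality $\sum a_j f_j / \sum a_j g_j \ge \min_j f_j/g_j$ after discarding the numerator contributions from $j < r-k$. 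This is a clean, equivalent, and arguably more elementary way to finish; it also makes clear that the minimum is attained already at integer partitions $\lambda = v^{(j)}$, so no rational rescaling is needed.
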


  We should note that the previous result is not new. Log resolutions for
  generic determinantal varieties are now classical objects. They are
  essentially spaces of complete collineations, obtained by blowing up
  \detvark along $\detvar0$, $\detvar1$, \dots, $\detvar{k-1}$, in this order
  \cite{Sem51,Tyr56,Vai84,Lak85}. It is possible to use these resolutions to
  compute log canonical thresholds, and this was done by A.\ Johnson in her
  Ph.D.\ thesis \cite{Joh03}. In fact she is able to compute all the multiplier
  ideals $\mathcal{J}(\M,c \cdot \detvark)$. Our method does not need any
  knowledge about the structure of these log resolutions.

  \subsection{Topological zeta function}

  Using our techniques, we are able to understand orbits in \arc\M quite
  explicitly. In Section \ref{motiv} we compute motivic volumes of orbits, and
  this allows us to determine topological zeta functions for determinantal
  varieties (for square matrices).

  \begin{introtheorem}
    Let $\M = \AA^{r^2}$ be the space of square $r \times r$ matrices, and let
    \detvark be the subvariety of matrices of rank at most $k$. Then the
    topological zeta function of the pair $(\M, \detvark)$ is given by
    \[
      Z^{\operatorname{top}}_{\detvark}(s) 
      =
      \prod_{\zeta \in \Omega} \frac{1}{1-s\,\zeta^{-1}}
    \]
    where $\Omega$ is the set of poles:
    \[
      \Omega = \left\{
      \,\,
      -\frac{r^2}{k+1},\quad
      -\frac{(r-1)^2}{k},\quad
      -\frac{(r-2)^2}{k-1},\quad
      \dots,\quad
      -(r-k)^2
      \,\,
      \right\}.
    \]
  \end{introtheorem}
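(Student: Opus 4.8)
The plan is to compute the topological zeta function directly from its expression as a limit of motivic volumes over the arc space, using the orbit decomposition of $\arc\M$ established in Sections~\ref{orbdec} and~\ref{motiv}. Recall that for the pair $(\M,\detvark)$ one has
\[
  Z^{\mathrm{top}}_{\detvark}(s)
  = \sum_{p\ge 1} \operatorname{vol}_{\mathrm{top}}\!\bigl(\cont{p}{\detvark}\bigr)\, \mathbb{L}^{-ps}\Big|_{\text{suitably interpreted}},
\]
or more precisely that it is extracted from the motivic generating series $\sum_p [\cont{p}{\detvark}]\,\mathbb{L}^{-ps}T^p$ by the usual specialization sending $[\mathbb{L}]\mapsto 1$ after dividing by the right power of $\mathbb{L}-1$. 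Since each contact locus $\cont{p}{\detvark}$ decomposes as a finite (at each truncation level) disjoint union of orbits $\orb\lambda$, and since Section~\ref{motiv} gives the motivic volume of each orbit, the first step is to assemble these orbit contributions into a generating function indexed by the sequences $\lambda=(\lambda_1\ge\dots\ge\lambda_r\ge 0)$ that meet the appropriate contact condition with $\detvark$ (namely $\lambda_{k+1}+\lambda_{k+2}+\dots+\lambda_r = p$, using the order-of-contact computation from Section~\ref{orbdec}).

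First I would write down, using the codimension formula (the third boxed theorem, with $s=r$ so the weight of $\lambda_i$ is $2i-1$) together with the motivic volume computation from Section~\ref{motiv}, the contribution of a single orbit to the series; after the specialization $[\mathbb{L}]\to 1$ this is a monomial in $s$ whose exponent is the relevant linear form in $\lambda$. Summing a geometric-type series over all admissible $\lambda$ then produces a rational function in $s$. The combinatorial heart is to organize the sum over partitions $\lambda$ by grouping the coordinates according to the ``blocks'' determined by $k$: the coordinates $\lambda_{r-i(k+1)+1},\dots$ contribute, after cancellation, exactly the factors $1/(1-s\zeta^{-1})$ with $\zeta = -(r-j)^2/(k+1-j)$ for $j=0,\dots,k$. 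In other words, the telescoping that governs which $\lambda_i$ can be taken strictly positive (forced by the contact condition and by the chain $\lambda_1\ge\cdots\ge\lambda_r$) is precisely what collapses the $r$-fold sum into a product of $k+1$ simple rational factors.

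The main obstacle will be bookkeeping the cancellations in the summation over $\lambda$: naively one gets a sum over all decreasing nonnegative sequences with a prescribed contact constraint, and most of the apparent poles cancel, leaving only the $k+1$ genuine ones in $\Omega$. I would handle this by a change of variables replacing $\lambda$ by the successive differences $\mu_i=\lambda_i-\lambda_{i+1}\ge 0$ (with $\mu_r=\lambda_r$), which turns the nested inequalities into independence of the $\mu_i$, makes the contact condition a single linear equation in the $\mu_i$, and makes the weighted sum $\sum_i \lambda_i w_i$ into $\sum_i \mu_i W_i$ with $W_i$ a partial sum of weights — after which the series genuinely factors as an Euler product over $i$, and one reads off that a factor is nontrivial (i.e.\ contributes a pole) exactly when $i\equiv r\pmod{k+1}$ is replaced by the condition that $\mu_i$ is allowed to be positive subject to the contact constraint, which happens only for the $k+1$ values of $i$ giving the listed poles. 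A secondary (but routine) point is to verify the edge cases $k=0$ (where the formula must reduce to the known zeta function of a smooth hypersurface-like situation, here $\detvar0$ a point, giving the single pole $-r^2$) and to confirm that the specialization from motivic to topological zeta function is licit here because all the strata $\orb\lambda$ have motivic classes that are polynomials in $\mathbb{L}$ (they are locally trivial fibrations over affine spaces, as shown in Section~\ref{motiv}), so no $\mathbb{L}-1$ denominators obstruct setting $[\mathbb{L}]=1$.
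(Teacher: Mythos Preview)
Your overall strategy (orbit decomposition, change to difference variables $\mu_i=\lambda_i-\lambda_{i+1}$, geometric-series summation) matches the paper's, but two of the key mechanisms you describe are incorrect, and without them the plan does not go through.

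First, the motivic volume of an orbit is not simply $\LL^{-b_\lambda}$: by Proposition~\ref{prop:orbvolume} it equals $\LL^{-b_\lambda}\,[\GLr/P_\lambda]^2[L_\lambda]$, and the factor $[\GLr/P_\lambda]^2[L_\lambda]$ depends on the \emph{shape} of $\lambda$, i.e.\ on which of the differences $a_i=\lambda_i-\lambda_{i+1}$ vanish. Consequently the sum over $\lambda$ does \emph{not} factor as an Euler product over the independent $\mu_i$; one must first stratify by the support set $I^c=\{i:a_i\ne 0\}$ and sum each stratum separately, as the paper does in \eqref{eq:zeta3}--\eqref{eq:zeta5}. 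Your claim that ``after the specialization $[\LL]\to 1$ this is a monomial in $s$'' is wrong term by term: since $\chi(\GL{d})=0$ for every $d\ge 1$, every orbit contributes $0$ under the Euler-characteristic map, and the passage $\LL\to 1$ only makes sense \emph{after} summing the geometric series, which introduces genuine $(\LL-1)$ factors in the denominator via $1/(\LL^{\psi_i}-1)$. The heart of the computation is the balancing of these denominators against the numerator factors $[\GL{d(I,i)}]=O((\LL-1)^{d(I,i)})$: only the stratum $I=\emptyset$ (all differences strictly positive, so all $d(I,i)=1$) survives the limit. Your assertion that ``no $\LL-1$ denominators obstruct setting $[\LL]=1$'' is the opposite of what happens.

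Second, your explanation for why only $k+1$ poles remain is wrong. It is not that ``$\mu_i$ is allowed to be positive \dots\ only for the $k+1$ values of $i$''; every $\mu_i$ ranges freely over $\NN$. The collapse to $k+1$ nontrivial factors comes from the fact that the contact value $\lambda_{r-k}+\cdots+\lambda_r$, rewritten in the $\mu$-variables, involves only $\mu_{r-k},\dots,\mu_r$. Hence the weight $\psi_i=i^2+s\max\{0,k+1+i-r\}$ equals $i^2$ for $i<r-k$, and in the surviving $I=\emptyset$ term the factor $i^2/\psi_i$ is identically $1$ for those $i$. The remaining $k+1$ indices $i=r-k,\dots,r$ give exactly the listed poles $-(r-j)^2/(k+1-j)$.
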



  \subsection*{Acknowledgments} 

  I am very grateful to my thesis adviser, Lawrence Ein, for introducing me to
  the world of arc spaces, and for suggesting this problem to me. His guidance
  and support during my years as a graduate student made this work possible. I
  would also like to thank Karen Smith and Mircea \Mustata for their valuable
  comments and suggestions on an earlier version of this paper.

\section{Arc spaces and motivic integration}

  \label{sec:introarcs}

  We briefly review in this section the basic theory of arc spaces and motivic
  integration, as these tools will be used repeatedly. Most of these results
  are well-known. We have gathered them mainly from \cite{DL98}, \cite{DL99},
  \cite{ELM04}, \cite{Ish06}, \cite{Vey06}, and \cite{dFEI08}.  We direct the
  reader to those papers for more details and proofs.

  We will always work with varieties and schemes defined over the complex
  numbers. 
  When we use the word scheme, we do not
  necessarily assume that it is of finite type. 

  \subsection{Arcs and jets}\label{sec-arcs} Given a variety $X$ and a
  non-negative integer $n$, we consider the following functor from the
  category of $\CC$-algebras to the category of sets:
  \[
     F_X^n(A) = \Hom\left(\Spec \tpsr n A, X\right).
  \]
  This functor is representable by a scheme, the \emph{$n$-th jet scheme}
  of $X$, which we denote by \jetn X. By construction, when $m \ge n$ we
  have natural projections $\psi_{m,n} : \jetm X \to \jetn X$, known as the
  \emph{truncation maps}. The inverse limit of the jet schemes of X (with
  respect to the truncation maps) is again a scheme, known as the \emph{arc
  space} of $X$, and denoted by \arc X:
  \[
      \arc X = \varprojlim_n \jetn X.
  \]
  Notice that \arc X is not of finite type if $\dim X >0$, and that we have
  natural projections $\psi_n: \arc X \to \jetn X$, also known as
  \emph{truncation maps}. When $X$ is affine, the arc space represents the
  following functor:
  \[
     F_X^\infty(A) = \Hom\left(\Spec \psr A, X\right).
  \]
  
  The assignment $X \mapsto \arc X$ is functorial: each morphism $f:X'\to X$
  induces by composition a morphism $\arc f: \arc X' \to \arc X$, and
  $\arc{(g\circ f)} = \arc f \circ \arc g$. As a consequence, if $G$ is a
  group scheme, so is \arc G, and if $X$ has an action by $G$, the arc space
  \arc X has an action by \arc G. Analogous statements hold for the jet
  schemes.

  \subsection{Contact loci and valuations}\label{sec-clv} By
  \emph{constructible} subset of a scheme (not necessarily Noetherian) we
  mean a finite union of locally closed subsets. A constructible subset $\C
  \subset \arc X$ is called \emph{thin} if one can find a proper subscheme
  $Y \subset X$ such that $\C \subset \arc Y$.  Constructible subsets which
  are not thin are called \emph{fat}. A \emph{cylinder} in \arc X is a set
  of the form $\psi^{-1}_n(C)$, for some constructible set $C\subset \jetn
  X$. On a smooth variety, cylinders are fat, but in general a cylinder
  might be contained in $\arc S$, where $S = \operatorname{Sing}(X) \subset
  X$ is the singular locus.

  An arc $\alpha \in \arc X$ induces a morphism $\alpha : \Spec \psr K \to X$,
  where $K$ is the residue field of $\alpha$. Given an ideal $\II \subset
  \OO_X$, its pull-back $\alpha^*(\II) \subset \psr K$ is of the form $(t^e)$,
  where $e$ is either a non-negative integer or infinity (by convention
  $t^\infty=0$). We call $e$ the \emph{order of contact} of $\alpha$ along
  $\II$ and denote it by $\ord_\alpha(\II)$.  Given a collection of ideals $I
  = (\II_1, \dots, \II_r)$ and a multi-index $\mu = (m_1, \dots, m_r) \in
  \ZZ_{\ge0}$ we introduce the \emph{contact locus}:
  \begin{gather*}
     \operatorname{Cont}^{=\mu}(I) = \{ \,\, \alpha \in \arc X \,\,\, : \,\,\, 
     \ord_\alpha(\II_j) = m_j \text{~for all $j$} \,\,\, \}, \\
     \operatorname{Cont}^{\mu}(I) = \{ \,\, \alpha \in \arc X \,\,\, : \,\,\, 
     \ord_\alpha(\II_j) \ge m_j \text{~for all $j$} \,\,\, \}.
  \end{gather*}
  Notice that contact loci are cylinders.

  Let $\C \subset \arc X$ be an irreducible fat set. Then $\C$ contains a
  generic point $\gamma \in \C$ which we interpret as a morphism $\gamma :
  \Spec \psr K \to X$, where $K$ is the residue field of $\gamma$. Let $\eta$
  be the generic point of $\Spec \psr K$.  Since $\C$ is fat, $\gamma(\eta)$
  is the generic point of $X$, and we get an inclusion of fields \[ \CC(X) \to
  K(\!(t)\!). \] The composition of this inclusion with the canonical
  valuation on $K(\!(t)\!)$ is a valuation on $\CC(X)$, which we denote by
  $\nu_\C$. In this way we obtain a map from the set of fat irreducible
  subsets of \arc X to the set of valuation of $\CC(X)$ defined over $X$:
  \[
    \def\sp{\,\,\,}
    \{\sp \C \subseteq \arc X \sp : \sp \C \sp\text{irreducible fat} \sp\}
    \quad\longrightarrow\quad
    \{\sp \text{discrete valuations over $X$} \sp\}.
  \]
  This map is always surjective: for a discrete valuation $\nu$ of $\CC(X)$
  defined over $X$, the completion of the discrete valuation ring $\OO_\nu$ is
  isomorphic to a power series ring $\psr{k_\nu}$. But it is far from being
  injective. For example, different choices of uniformizing parameter in a
  discrete valuation ring give rise to different arcs.

  A valuation $\nu$ of $\CC(X)$ is called \emph{divisorial} if it is of the
  form $q \cdot \operatorname{val}_E$, where $q$ is a positive integer and $E$
  is a prime divisor on a variety $X'$ birational to $X$. An irreducible fat
  set $\C \subset \arc X$ is said to be \emph{divisorial} if the corresponding
  valuation $\nu_\C$ is divisorial. In \cite{Ish06} it is shown that the union
  of all divisorial sets corresponding to a given valuation $\nu$ is itself a
  divisorial set defining $\nu$ (in fact it is an irreducible component of a
  contact locus). These unions are called \emph{maximal divisorial sets}.
  There is a one to one correspondence between divisorial valuations and
  maximal divisorial sets. This gives an inclusion
  \[
    \def\sp{\,\,\,}
    \{\sp \text{divisorial valuations over $X$} \sp\}
    \quad\hookrightarrow\quad
    \{\sp \C \subseteq \arc X \sp : \sp \C \sp\text{irreducible fat} \sp\}.
  \]
  Through this inclusion, the topology on the arc space $\arc X$ gives
  structure to the set of divisorial valuations. For example, given two
  valuations $\nu$ and $\nu'$ with corresponding maximal divisorial sets $\C$
  and $\C'$, we say that $\nu$ \emph{dominates} $\nu'$ if $\C \supseteq \C'$.
  The \emph{generalized Nash problem} consists in understanding the relation
  of domination among divisorial valuations.

  \subsection{Discrepancies}\label{sec-nashblup} Let $X$ be a variety of
  dimension $n$. The \emph{Nash blowing-up} of $X$, denoted $\widehat X$, is
  defined as the closure of $X_{\operatorname{reg}}$ in $\PP_X(\Omega^n_X)$;
  it is equipped with a tautological line bundle
  $\OO_{\PP_X(\Omega^n_X)}(1)|_{\widehat X}$, which we denote by $\widehat
  K_X$ and call the \emph{Mather canonical line bundle} of $X$. When $X$ is
  smooth, $X = \widehat X$ and $K_X = \widehat K_X$.

  When $Y$ is a smooth variety and $f:Y \to X$ is a birational morphism that
  factors through the Nash blowing-up, we define the \emph{relative Mather
  canonical divisor} of $f$ as the unique effective divisor supported on the
  exceptional locus of $f$ and linearly equivalent to $K_Y - f^*\widehat K_X$; we
  denote it by $\widehat K_{Y/X}$.

  Let $\nu$ be a divisorial valuation of $X$. Then we can find a smooth
  variety $Y$ and a birational map $Y \to X$ factoring through the Nash
  blowing-up of $X$, such that $\nu = q\cdot \operatorname{val}_E$ for some
  prime divisor $E \subset Y$. We define the \emph{Mather discrepancy} of $X$
  along $\nu$ as \[ \hat k_\nu(X) = q \cdot \ord_E(\widehat K_{Y/X}). \]
  This definition is independent of the choice of resolution $Y$.

  In the smooth case, \Mustata showed that we can compute discrepancies
  using the arc space \cite{Mus01,ELM04}. This is generalized to arbitrary
  varieties in \cite{dFEI08} via the use of Mather discrepancies. More
  precisely, given a divisorial valuation $\nu = q\cdot
  \operatorname{val}_E$, let $\C_\nu \subset \arc X$ be the corresponding
  maximal divisorial set.  Then \[ \operatorname{codim}(\C_\nu, \arc X) =
  \hat k_\nu(X) + q. \]

  \subsection{Motivic integration}\label{sec-motivic}

  {
  \def\M{\ensuremath{\mathcal{M}}\xspace}%
  \def\mm#1{\ensuremath{\mu_{#1}}\xspace}%

  Let $\M_0$ be the Grothendieck ring of algebraic varieties over
  $\CC$. In \cite{DL99}, the authors introduce a certain completion of
  a localization of $\M_0$, which we denote by $\M$.  Also, for each variety $X$
  over $\CC$, they define a measure \mm X on \arc X with values in \M. This
  measure is known as the \emph{motivic measure} of $X$. The following
  properties hold for $\M$ and the measures \mm X:
  \begin{itemize}

     \item There is a canonical ring homomorphism $\M_0 \to \M$. In particular,
       for each variety $X$ one can associate an element $[X] \in \M$, and the
       map $X \mapsto [X]$ is additive (meaning that $[X] = [Y] + [U]$, where $Y
       \subset X$ is a closed subvariety and $U=X\setminus Y$).

     \item The element $[\AA^1]\in \M$ has a multiplicative inverse. We write
       $\LL =[\AA^1]$.

     \item Both the Euler characteristic and the Hodge-Deligne polynomial,
       considered as ring homomorphisms with domain $\M_0$, extend to
       homomorphisms \[ \chi : \M \to \RR, \qquad E : \M \to \ZZ(\!(u,v)\!), \]
       where $\chi(\LL) = 1$ and $E(\LL) = uv$.

     \item Cylinders in \arc X are \mm X-measurable. In particular, contact
       loci are measurable.

     \item If $X$ is smooth, $\mm X(\arc X) = [X]$.

     \item A thin measurable set has measure zero.

     \item Let $\C \subset \arc X$ be a cylinder in \arc X. Then the
       truncations $\psi_n(\C) \subset X_n$ are of finite type, so they
       define elements $[\psi_n(\C)] \in \M$. Then
       \[ 
          \mm X (\C) = \lim_{n \to \infty} [\psi_n(\C)] \cdot \LL^{-nd} 
       \] 
       where $d$ is the dimension of $X$. Furthermore, if $\C$ does not
       intersect $\arc{(X_{\operatorname{sing}})}$, then
       $[\psi_n(\C)]\cdot\LL^{-nd}$ stabilizes for $n$ large enough.

     \item Given an ideal $\II \subset \OO_X$, we define a function $|\II|$
       on \arc X with values on \M\ via
       \[ 
          |\II|(\alpha) = \LL^{-\ord_\alpha(\II)} \qquad \alpha \in \arc X. 
       \] 
       Notice that $\ord_\beta(\II) = \infty$ if and only if $\beta \in
       \arc{\operatorname{Zeroes}(\II)}$, so $|\II|$ is only defined up to
       a measure zero set. Then $|\II|$ is \mm X-integrable and
       \[
          \int_{\arc X} |\II|\, d\mm X = \sum_{p=0}^\infty
          \mm X\left(\operatorname{Cont}^{=p}(\II)\right) \cdot \LL^{-p}.
       \]

     \item Let $f: Y \to X$ be a birational map, and assume $Y$ smooth. Let
       $\Jac(f)$ be the ideal in $\OO_Y$ for which $f^*\Omega_X^n \to
       \Jac(f) \cdot \Omega_Y^n$ is surjective.  Then $(\arc f)^*(\mm X) =
       |\Jac(f)| \cdot \mm Y$; in other words, for a measurable set $\C
       \subset \arc X$, and a \mm X-integrable function $\varphi$,
       \[
         \int_\C \varphi \,\, d\mm X = \int_{\arc f^{-1}(\C)}
         (\varphi \circ \arc f) \,\, |\Jac(f)| \,\, d\mm Y.
       \]
       This is known as the \emph{change of variables formula} for motivic
       integration.

     \item Assume that $X$ is smooth, and consider a subscheme $Y\subset X$
       with ideal $\II \subset \OO_X$. The \emph{motivic Igusa zeta
       function} of the pair $(X,Y)$ is defined as
       \[
          Z_Y(s) = \int_{\arc X} |\II|^s\, d\mm X = \sum_{p=0}^\infty
         \mm X\left(\operatorname{Cont}^{=p}(\II)\right) \cdot \LL^{-sp}.
       \]
       In this expression, $\LL^{-s}$ is to be understood as a formal
       variable, so $Z_Y(s) \in \M[\![\LL^{-s}]\!]$. It is shown in
       \cite{DL98} that $Z_Y(s)$ is a rational function. More precisely,
       let $f:X' \to X$ be a log resolution of the pair $(X,Y)$. This means
       that $f$ is a proper birational map, $X'$ is smooth, the
       scheme-theoretic inverse image of $Y$ in $X'$ is an effective
       Cartier divisor $E$, the map $f$ is an isomorphism on $X' \setminus
       E$, and the divisor $K_{X'/X}+E$ has simple normal crossings. Let
       $E_j$, $j \in J$, be the irreducible components of $E$, and write $E
       = \sum_{j\in J} a_j E_j$ and $K_{X'/X} = \sum_{j\in J} k_j E_j$. For
       a subset $I \subset J$, consider $E_I^\circ = (\cap_{i \in I} E_i)
       \setminus (\cup_{j \notin I} E_j)$. Then:
       \[
          Z_Y(s) = \sum_{I \subseteq J} [E_I^\circ]
          \prod_{i \in I} \frac { \LL^{-a_i s} ~ (\LL-1) } 
          { ~~ \LL^{k_i + 1} - \LL^{-a_i s} ~~ }.
       \]
       
     \item Keeping the same notation as above, the \emph{topological zeta
       function} of the pair $(X,Y)$ is defined as
       \[
         Z^{\operatorname{top}}_Y(s) =
         \sum_{I \subseteq J} \chi\left(E_I^\circ\right)
         \prod_{i \in I} \frac 1 {a_i s + k_i +1}
       \]
       where $\chi(\cdot)$ denotes the Euler characteristic (see
       \cite{Vey06, DL98}). Notice that when $s$ is a positive integer we
       have
       \[
          \frac { \LL^{-a_i s} ~ (\LL-1) } 
          { ~~ \LL^{k_i + 1} - \LL^{-a_i s} ~~ }
          =
          \frac 1 { 1 + \LL + \LL^2 + \dots + \LL^{a_i s + k_i} }
          =
          \frac 1 {[\PP^{a_i s + k_i}]},
       \]
       and that the Euler characteristic of $\PP^d$ is $d+1$. In this
       sense, the topological zeta function can be understood as the
       specialization of the motivic Igusa zeta function via the Euler
       characteristic map $\chi:\M \to \RR$. One can use this fact to show
       that $Z^{\operatorname{top}}_Y(s)$ is independent of the chosen log
       resolution \cite{DL98}.

     \item As explained in \cite[\S 2.3]{DL98} one can also understand the
       relation between $Z_Y(s)$ and $Z^{\operatorname{top}}_Y(s)$ in the
       following way. Let $\mathcal A$ be the subring of
       $\M[\![\LL^{-s}]\!]$ generated by the polynomials in $\M[\LL^{-s}]$
       and by the quotients of the form $\frac {\LL^{-as} (\LL-1)}
       {\LL^{k + 1} - \LL^{-as}}$ for positive integers $a, k$. The
       expansions
       \begin{align*}
         & \LL^{-s} = \sum_{n=0}^\infty \binom {-s} n (\LL - 1)^n
         = 1 - s (\LL -1) + \frac {-s (-s-1)} 2 (\LL -1)^2 + \dots
         \\\\ &
         \frac { \LL^{-a s} ~ (\LL-1) } 
         { \LL^{k + 1} - \LL^{-a s} }
         =
         \frac 1 { 1 + \LL + \LL^2 + \dots + \LL^{a s + k} }
         =
         \frac 1 {a s + k + 1} - \frac {a s + k}{2(a s + k + 1)}(\LL-1) +
         \dots
       \end{align*}
       induce a map from $\mathcal A$ to $\bar\M(s)[\![\LL-1]\!]$, where
       $\bar\M$ is the largest quotient of $\M$ with no $(\LL-1)$-torsion.
       Using the Euler characteristic map $\chi: \M \to \RR$ (which sends
       $\LL$ to $1$ and factors through $\bar\M$) and considering the
       quotient by the ideal generated by $(\LL-1)$, we get a natural map
       $\operatorname{ev}_{\LL=1} : \mathcal A \to \RR(s)$. The motivic
       Igusa zeta function is an element of $\mathcal A$, and the
       topological zeta function is its image in $\RR(s)$ via
       $\operatorname{ev}_{\LL=1}$:
       \[
          Z^{\operatorname{top}}_Y(s) = \operatorname{ev}_{\LL=1}
          \big( Z_Y(s) \big).
       \]
     

  \end{itemize}

  }

\section{Partitions}

  \label{sec:partitions}

  In order to enumerate orbits in the arc space of determinantal varieties, it
  will be convenient to use the language of partitions.  In fact, we will
  consider a slight generalization of the concept of partition, where we allow
  terms of infinite size and an infinite number of terms (we call these
  objects pre-partitions).  In this section we recall some basic facts about
  partitions that will be needed in the rest of the article, and extend them
  to the case of pre-partitions. Most of the results are well known. For a
  detailed account of the theory of partitions we refer the reader to
  \cite{dCEP80} and \cite{Ful97}.

  \subsection{Definitions} Let \NN denote the set of non-negative integers,
  and consider $\NNb = \NN \cup \{\infty\}$. We extend the natural order on
  \NN to \NNb by setting $\infty > n$ for any $n \in \NN$.
  We also set $\infty + n = \infty$ for any $n \in \NNb$.

  A \emph{pre-partition} is an infinite non-increasing sequence of elements of
  \NNb. Given a pre-partition $\lambda = (\lambda_1, \lambda_2, \dots)$, the
  elements $\lambda_i$ are known as the \emph{terms} of $\lambda$. The first
  term $\lambda_1$ is called the \emph{maximal term} or \emph{co-length} of
  $\lambda$. If all the terms of $\lambda$ are non-zero, we say that $\lambda$
  has infinite length; otherwise, the largest integer $i$ such that $\lambda_i
  \ne 0$ is called the \emph{length} of $\lambda$. If a pre-partition
  $\lambda$ has length no bigger than $\ell$, we will often denote $\lambda$
  by the finite sequence $(\lambda_1, \lambda_2, \dots, \lambda_\ell)$.

  A \emph{partition} is a finite non-increasing sequence of positive integers.
  A partition can be naturally identified with a pre-partition of finite
  length and finite co-length.

  Given a pre-partition $\lambda = (\lambda_1, \lambda_2, \dots)$ we define
  \[
     \lambda_i^* = 
     \sup \left\{\, j \,\,:\,\, \lambda_j \ge i \,\right\} 
     \in \NNb.
  \]
  Then $\lambda_i^* \ge \lambda_{i+1}^*$, and we obtain a new pre-partition
  $\lambda^*$, known as the \emph{conjugate} pre-partition of $\lambda$. It
  follows from the definition that $\lambda^{**} = \lambda$, that the length
  of $\lambda^*$ is the co-length of $\lambda$, and that the co-length of
  $\lambda^*$ is the length of $\lambda$.  In particular, the conjugate of a
  partition is also a partition.

  \subsection{Diagrams} It will be helpful to visualize pre-partitions as
  Young diagrams (sometimes also known as Ferrers diagrams). A Young diagram
  is a graphical representation of a pre-partition; it is a collection of
  boxes, arranged in left-justified rows, with non-increasing row sizes. To
  each pre-partition $\lambda = (\lambda_1, \lambda_2, \dots)$ there is a
  unique Young diagram whose $i$-th row has size $\lambda_i$. For example:
  \diagramexample
  
  The length of a partition corresponds to the height of the associated
  diagram, whereas the co-length corresponds to the width. The diagram of the
  conjugate pre-partition is obtained from the original diagram by switching
  rows with columns. More concretely, if $T$ denotes the diagram associated to
  a pre-partition $\lambda$, the terms $\lambda_i$ of the pre-partition give
  the row sizes of $T$, and the terms $\lambda_i^*$ of the conjugate
  pre-partition give the column sizes of $T$.

  \subsection{Posets of partitions} Given two pre-partitions $\lambda =
  (\lambda_1, \lambda_2, \dots)$ and $\mu = (\mu_1, \mu_2, \dots)$, we say
  that $\lambda$ is \emph{contained} in $\mu$, and denote it by $\lambda
  \subseteq \mu$, if $\lambda_i \leq \mu_i$ for all $i$. Containment of
  pre-partitions corresponds to containment of the associated diagrams. In
  particular, $\lambda \subseteq \mu$ if and only if $\lambda^* \subseteq
  \mu^*$.

  If $\lambda$ and $\mu$ are pre-partitions with finite co-length, we say that
  $\mu$ \emph{dominates} $\lambda$, denoted by $\lambda \leq \mu$, if
  \[
     \lambda_1 + \lambda_2 + \dots + \lambda_i
     \quad \leq \quad
     \mu_1 + \mu_2 + \dots + \mu_i
  \]
  for all positive integers $i$. If $\lambda$ and $\mu$ have finite length, we
  say that $\mu$ \emph{co-dominates} $\lambda$, denoted $\lambda \triangleleft
  \mu$, if
  \[
     \lambda_i + \lambda_{i+1} + \dots
     \quad \leq \quad
     \mu_i + \mu_{i+1} + \dots
  \]
  for all positive integers $i$ (notice that the sums above have only a finite
  number of terms because the pre-partitions have finite length). It is shown
  in \cite[Prop.  1.1]{dCEP80} that the conditions of domination and
  co-domination of pre-partitions can be expressed in terms of the conjugates.
  More precisely, we have:
  \[ 
     \begin{array}{ccccrclc}
       \lambda \leq \mu &
       \quad \Longleftrightarrow \quad &
       \lambda^* \triangleleft \mu^* &
       \quad \Longleftrightarrow \quad &
       \lambda_i^* + \lambda_{i+1}^* + \dots &
       \leq &
       \mu_i^* + \mu_{i+1}^* + \dots &
       \quad \forall i,
       \\\\
       \lambda \triangleleft \mu &
       \quad \Longleftrightarrow \quad &
       \lambda^* \leq \mu^* &
       \quad \Longleftrightarrow \quad &
       \lambda_1^* + \lambda_2^* + \dots + \lambda_i^* &
       \leq &
       \mu_1^* + \mu_2^* + \dots + \mu_i^* &
       \quad \forall i. 
     \end{array}
  \]

  The three relations (containment, domination and co-domination) define
  partial orders.  We are mostly interested in the order of co-domination.
  Given a positive integer $r$, we denote by \eptr (respectively \ptr) the
  poset of pre-partitions (resp.~partitions) of length at most $r$ with the
  order of co-domination. By \bptrn we denote the poset of partitions of
  length at most $r$ and co-length at most $n$. It can be shown that \eptr,
  \ptr and \bptrn are all latices.

  \subsection{Adjacencies}\label{adjacencies} In Section \ref{orbposet} we
  will need to have a good understanding of the structure of the posets \eptr.
  For our purposes, it will be enough to determine the adjacencies in \eptr.

  Let $\lambda$ and $\mu$ be two different pre-partitions in \eptr such that
  $\lambda \triangleleft \mu$.  We say that $\lambda$ and $\mu$ are
  \emph{adjacent} (or that $\mu$ \emph{covers} $\lambda$) if there is no
  pre-partition $\nu$ in \eptr, different from $\lambda$ and $\mu$, such that
  $\lambda \triangleleft \nu \triangleleft \mu$.  Adjacencies in \ptr were
  determined in \cite[Prop. 1.2]{dCEP80}. They come in three different types,
  which we call \emph{single removals}, \emph{slips} and \emph{falls}.

  \begin{itemize}

     \item We say that a pre-partition $\lambda$ is obtained from $\mu$ via a
       \emph{single removal} if $\lambda_i = \mu_i$ for all $i \ne j$ and
       $\lambda_j = \mu_j - 1$, where $j$ is the smallest integer such that
       $\mu_j$ is finite. At the level of diagrams, $\lambda$ is obtained from
       $\mu$ by removing one box in the lowest row of finite size. Notice that
       this removal can only be done if $\mu_{j+1} < \mu_j$.

     \item We say that $\lambda$ is obtained from $\mu$ via a \emph{slip} if
       there exists a positive integer $j$ such that $\lambda_{j+1} =
       \mu_{j+1} - 1$, $\lambda_j = \mu_j + 1$ and $\lambda_i = \mu_i$ for all
       $i \notin\{ j, j+1\}$. In this case, the diagram of $\lambda$ is
       obtained from the diagram of $\mu$ by moving a box from row $j+1$ to
       row $j$. A slip from row $j+1$ can only happen if $\mu_{j+2} <
       \mu_{j+1}$, and $\mu_j < \mu_{j-1}$.

     \item We say that $\lambda$ is obtained from $\mu$ via a \emph{fall} if
       $\mu^*$ is obtained from $\lambda^*$ via a slip. In other words, there
       exist positive integers $j < k$ such that $\lambda_k = \mu_k - 1$,
       $\lambda_j = \mu_j + 1$, and $\lambda_i = \mu_i$ for all $i \notin \{j,
       k\}$. A fall from row $k$ to row $j$ can only happen if $\mu_{k+1} <
       \mu_k$, $\mu_j < \mu_{j-1}$, and $\mu_i = \mu_{i'}$ for all $i, i' \in
       \{j, j+1, \dots, k\}$. During a fall, a box in the diagram of $\mu$ is
       moved from one column to the next.

  \end{itemize}
  Since we are dealing with pre-partitions, we will also need to consider
  \emph{infinite removals}:
  \begin{itemize}

     \item We say that a pre-partition $\lambda$ is obtained from $\mu$ via an
       \emph{infinite removal} if $\lambda_i = \mu_i$ for all $i \ne j$ and
       $\lambda_j < \mu_j = \infty$, where $j$ is the largest integer such
       that $\mu_j$ is infinite. At the level of diagrams, $\lambda$ is
       obtained from $\mu$ by removing infinitely many boxes in the highest
       row of infinite size.

  \end{itemize}

  \partitionsfigure

  In \cite{dCEP80} the authors show that adjacencies in the set of partitions
  with respect to the order of domination correspond to simple removals, slips
  and falls. The result for partitions with the order of co-domination follows
  immediately from the fact that $\lambda \triangleleft \mu \Leftrightarrow
  \lambda^* \leq \mu^*$. Now consider two pre-partitions $\lambda
  \triangleleft \mu$ with finite length, and assume they are adjacent. They
  must have the same number of infinite terms, otherwise the pre-partition
  $\nu$ obtained from $\lambda$ by adding one box in the lowest finite row
  verifies $\lambda \triangleleft \nu \triangleleft \mu$. Let $\hat\lambda$
  and $\hat\mu$ be the partitions obtained from $\lambda$ and $\mu$ by
  removing the infinite terms. Then $\hat\lambda$ and $\hat\mu$ are adjacent
  and we can apply the result of \cite{dCEP80} to show that $\lambda$ can be
  obtained from $\mu$ via a simple removal, a slip, or a fall.

  \begin{theorem}\label{th-adjacencies}
     Let $\lambda$ and $\mu$ be two pre-partitions in \eptr, and assume that
     $\lambda \triangleleft \mu$. Then there exists a finite sequence of
     pre-partitions in \eptr,
     \[
        \lambda = \nu^m 
        \,\,\triangleleft\,\, 
        \cdots
        \,\,\triangleleft\,\, 
        \nu^1 
        \,\,\triangleleft\,\,
        \nu^0 = \mu,
     \]
     such that $\nu^i$ is obtained from $\nu^{i-1}$ via a simple removal, an
     infinite removal, a slip, or a fall.
  \end{theorem}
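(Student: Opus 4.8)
The plan is to reduce the statement to the already-cited results of \cite{dCEP80}, handling the new feature --- infinite terms --- by a separate combinatorial argument. The key observation is that co-domination $\lambda \triangleleft \mu$ forces $\lambda$ and $\mu$ to have the \emph{same number of infinite terms}: if $\mu$ had more infinite terms than $\lambda$, then some tail sum $\lambda_i + \lambda_{i+1} + \dots$ would be finite while $\mu_i + \mu_{i+1} + \dots$ is infinite, which is compatible with $\triangleleft$; but if $\lambda$ had strictly more infinite terms than $\mu$, say $\lambda_j = \infty > \mu_j$, then the tail sum from index $j$ is infinite on the left and finite on the right, contradicting $\lambda \triangleleft \mu$. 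So $\lambda$ has at most as many infinite terms as $\mu$. First I would dispose of the case where the counts differ: if $\lambda$ has $a$ infinite terms and $\mu$ has $b > a$, I interpolate by performing $b - a$ infinite removals on $\mu$, one at a time, each deleting enough boxes from the lowest infinite row to bring it down past $\mu$'s largest finite term (or, on later steps, past the previously created finite term), producing a chain $\mu = \nu^0 \triangleleft \cdots$ of pre-partitions each covering the next via an infinite removal, terminating at a pre-partition $\mu'$ with exactly $a$ infinite terms and still satisfying $\lambda \triangleleft \mu'$. One checks $\triangleleft$ is preserved at each step because we only ever decrease terms of $\mu$ while keeping all tail sums $\geq$ the corresponding tail sums of $\lambda$; the point is to choose the amount removed large enough that the finite part dominates what is needed, which is possible precisely because $\lambda$'s tail sums past those indices are finite.

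Once $\lambda$ and $\mu$ have the same number $a$ of infinite terms, I strip them off: write $\hat\lambda = (\lambda_{a+1}, \lambda_{a+2}, \dots)$ and $\hat\mu = (\mu_{a+1}, \mu_{a+2}, \dots)$, which are genuine partitions (finite length since we are in \eptr, finite co-length since the infinite terms have been removed). The tail-sum inequalities defining $\lambda \triangleleft \mu$ are, for indices $i > a$, exactly the tail-sum inequalities defining $\hat\lambda \triangleleft \hat\mu$; for indices $i \leq a$ both sides are infinite, so they impose nothing. Hence $\hat\lambda \triangleleft \hat\mu$ as partitions. Now apply the cited result of \cite{dCEP80} (transported from the domination order to the co-domination order via $\lambda \triangleleft \mu \Leftrightarrow \lambda^* \leq \mu^*$, as the excerpt already notes): there is a chain $\hat\lambda = \hat\nu^m \triangleleft \cdots \triangleleft \hat\nu^0 = \hat\mu$ of partitions, each obtained from the previous by a single removal, a slip, or a fall. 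Re-attaching the $a$ infinite terms $\lambda_1 = \mu_1 = \infty, \dots, \lambda_a = \mu_a = \infty$ to each $\hat\nu^i$ gives pre-partitions $\nu^i \in \eptr$; I must verify that a single removal, slip, or fall on $\hat\nu^i$ becomes a legitimate move of the same type on $\nu^i$. For single removals and falls this is immediate, since those moves only touch finite rows and the relevant adjacency conditions ($\mu_{j+1} < \mu_j$, etc.) are unaffected by prepending equal infinite rows. For a slip from row $j+1$ to row $j$ in $\hat\nu^i$, if $j \geq 1$ the condition $\mu_j < \mu_{j-1}$ in the prepended diagram might now involve an infinite row $\mu_{j-1}$, but that only makes the strict inequality easier to satisfy (a finite value is always strictly less than $\infty$); the slip itself still moves one box between two finite rows. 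So each step survives, and concatenating the infinite-removal chain from the first paragraph with this chain yields the desired sequence.

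The main obstacle I anticipate is the first paragraph: making the infinite-removal interpolation precise while preserving $\triangleleft$ at every intermediate step and ensuring each intermediate step is genuinely a \emph{covering} by an infinite removal (as opposed to some other relation sneaking in between). The subtlety is that an infinite removal, by definition, deletes boxes only from the \emph{highest} row of infinite size; so I must peel off the infinite terms from the top down, and after each peel the "new" finite term I create must be chosen large enough that the resulting pre-partition still co-dominates $\lambda$ but small enough (or rather, I must check) that no pre-partition fits strictly between consecutive members of the chain --- this is where one invokes that the only pre-partitions $\nu$ with $\lambda \triangleleft \nu \triangleleft \mu$ and differing infinite-term count must themselves differ from $\mu$ by a genuine infinite removal at the relevant row. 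A clean way around any remaining friction is to not insist the intermediate chain be a chain of \emph{adjacencies} in the first step, but rather observe that it suffices to produce \emph{any} finite chain where each link is one of the four named moves (the statement does not require the $\nu^i$ to be consecutive in the covering order), which removes the burden of checking minimality entirely. I would phrase the argument that way.
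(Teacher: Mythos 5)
Your proposal is correct and follows essentially the same route as the paper: first equalize the number of infinite terms by a chain of infinite removals (checking co-domination is preserved at each step by choosing the new finite value large enough), then strip the common infinite rows, apply the \cite{dCEP80} adjacency result to the resulting partitions, and reattach the infinite rows, verifying that each simple removal, slip, or fall survives the reattachment. The only stumble is terminological — your phrase ``peel off the infinite terms from the top down'' is backwards relative to the actual definition (an infinite removal acts on the \emph{largest} index $j$ with $\mu_j=\infty$), but since that index is forced there is no freedom to get it wrong in execution, and your closing observation that the chain need only consist of the four named moves rather than genuine adjacencies is exactly the right way to dissolve the worry you raise.
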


  \begin{proof}
     Assume first that there are more infinite terms in $\mu$ that in
     $\lambda$. To each infinite row $j$ in $\mu$ which is finite in $\lambda$
     we apply an infinite removal, leaving at least $\lambda_j$ boxes
     (depending on the particular $\mu$ we might need to leave more boxes).
     This way we obtain a sequence
     \[
        \lambda
        \,\,\triangleleft\,\, 
        \nu^{m_0} 
        \,\,\triangleleft\,\, 
        \cdots
        \,\,\triangleleft\,\, 
        \nu^1 
        \,\,\triangleleft\,\,
        \nu^0 = \mu,
     \]
     where $\lambda$ has the same number of infinite rows as $\nu^{m_0}$ and
     where each $\nu^i$ is obtained from $\nu^{i-1}$ via an infinite removal.
     Notice that $m_0$ is the number of rows which are infinite in $\mu$ but
     finite in $\lambda$. Since $\mu$ has finite length, $m_0$ is finite.

     Let $k$ be the number of boxes in the finite rows of $\nu^{m_0}$. Then any
     pre-partition with the same number of infinite rows as $\nu^{m_0}$ and
     co-dominated by $\nu^{m_0}$ must use at most $k$ boxes in its finite rows.
     In particular there are only finitely many such pre-partitions. It follows
     that we can find a finite sequence
     \[
        \lambda =
        \nu^m 
        \,\,\triangleleft\,\, 
        \cdots
        \,\,\triangleleft\,\, 
        \nu^{m_0+1}
        \,\,\triangleleft\,\,
        \nu^{m_0}
     \]
     where consecutive terms are adjacent. From the discussion preceding the
     theorem, we see that $\nu^i$ can be obtained from $\nu^{i-1}$ by a simple
     removal, a slip, or a fall, and the result follows.
  \end{proof}

\section{Orbit decomposition of the arc space}

  \label{orbdec}
  We start by recalling our basic setup from the introduction. $\M = \AA^{rs}$
  is the space of $r \times s$ matrices with coefficients in \CC, and we
  assume that $r \leq s$. The ring of regular functions on \M is a polynomial
  ring on the entries of a generic matrix $x$:
  \[
    \OO_\M = \CC[x_{11},\dots,x_{rs}],
    \qquad \qquad
    x = 
    \begin{pmatrix}
      x_{11} & x_{12} & \dots  & x_{1s} \\
      x_{21} & x_{22} & \dots  & x_{2s} \\
      \vdots & \vdots & \ddots & \vdots \\
      x_{r1} & x_{r2} & \dots  & x_{rs}
    \end{pmatrix}.
  \]

  The generic determinantal variety of matrices of rank at most $k$ is
  denoted by \detvark. The ideal of \detvark is generated by the $(k+1)
  \times (k+1)$ minors of $x$.  It can be shown that generic determinantal
  varieties are irreducible, and that the singular locus of \detvark is
  $\detvar{k-1}$ when $0<k<r$ (\detvar0 and $\detvar{r}$ are smooth).  They
  are also Cohen-Macaulay, Gorenstein, and have rational singularities.
  For proofs of the previous statements, and a comprehensive account of the
  theory of determinantal varieties we refer the reader to \cite{BV88}.

  We denote by \G the group $\GLr \times \GLs$. It acts naturally on \M via
  change of basis:
  \[
    (g,h) \cdot A = g\,A\,h^{-1},
    \qquad
    (g,h) \in \G,
    \quad
    A \in \M.
  \]
  The rank of a matrix is the only invariant for this action, and the
  determinantal varieties are the orbit closures (their ideals being the only
  invariant prime ideals of $\OO_\M$).

  The group \G is an algebraic group. In particular it is an algebraic
  variety, and we can consider its arc space \arc\G and its jet schemes
  \jetn\G. The action of \G on \M and \detvark induces actions at the level
  of arc spaces and jet schemes:
  \[
    \begin{aligned}
      &\arc G \times \arc \M \to \arc \M,
      &\qquad
      &\arc G \times \arc \detvark \to \arc \detvark, \\
      &\jetn G \times \jetn \M \to \jetn \M,
      &&\jetn G \times \jetn \detvark \to \jetn \detvark.
    \end{aligned} 
  \]
  In this section we classify the orbits associated to all of these actions.

  As a set, the arc space \arc \M contains matrices of size $r \times s$ with
  entries in the power series ring \psr\CC. Analogously, the group $\arc G =
  \parc \GLr \times \parc \GLs$ is formed by pairs of square matrices with
  entries in \psr\CC which are invertible, that is, their determinant is a
  unit in \psr\CC. Orbits in \arc \M correspond to similarity classes of
  matrices over the ring \psr\CC, and we can easily classify these using the
  fact that \psr\CC is a principal ideal domain. 

  \begin{definition}[Orbit associated to a partition]\label{orbdec:deforb}
    Let $\lambda = (\lambda_1, \lambda_2, \dots, \lambda_r) \in \ept r$ be a
    pre-partition with length at most $r$. Consider the following matrix in
    \arc M:
    \[
      \delta_\lambda 
      \quad =  \quad
      \def\cd{\cdots}%
      \def\vd{\vdots}%
      \def\dd{\ddots}%
      \def\t#1{t^{\lambda_#1}}%
      \begin{pmatrix}
        0   & \cd & 0    & \t1 & 0   & \cd & 0   \\ 
        0   & \cd & 0    & 0   & \t2 & \cd & 0   \\ 
        \vd &     & \vd  & \vd & \vd & \dd & \vd \\ 
        0   & \cd & 0    & 0   & 0   & \cd & \t r 
      \end{pmatrix}
    \]
    (we use the convention that $t^\infty = 0$). Then the \arc\G-orbit of
    the matrix $\delta_\lambda$ is called the orbit in \arc\M
    \emph{associated} to the pre-partition $\lambda$, and it is denoted by
    \orb\lambda.
  \end{definition}

  \begin{proposition}[Orbits in \arc \M]
    \label{orbdec:orbdec}
    
    Every \arc G-orbit of \arc \M is of the form \orb \lambda for some
    pre-partition $\lambda \in \ept r$. An orbit \orb \lambda is contained in
    \arc \detvark if and only if the associated pre-partition $\lambda$
    contains at least $r-k$ leading infinities, i.e.  $\lambda_1 = \dots =
    \lambda_{r-k} = \infty$. In particular, $\arc \M \setminus
    \arc{\detvar{r-1}}$ is the union of the orbits corresponding to
    partitions, and the orbits in $\arc\detvark \setminus
    \arc{\detvar{k-1}}$ are in bijection with $\pt{k}$.  Moreover, the
    orbit corresponding to the empty partition $(0, 0, \dots )$ is the arc
    space $\parc{\M \setminus \detvar{r-1}}$.
  \end{proposition}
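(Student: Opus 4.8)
The plan is to exploit the fact that $\psr\CC$ is a principal ideal domain (indeed a discrete valuation ring), so that the classification of $\arc\G$-orbits in $\arc\M$ reduces to the Smith normal form over $\psr\CC$. First I would argue that every $\arc\G$-orbit contains a matrix of the form $\delta_\lambda$: given any arc $A \in \arc\M$, viewed as an $r\times s$ matrix over $R = \psr\CC$, the action of $\arc\G = \pjet{\infty}{\GLr}\times\pjet{\infty}{\GLs}$ is by $A \mapsto gAh^{-1}$ with $g,h$ invertible over $R$. Since $R$ is a PID, the theory of the Smith normal form says that there exist such $g, h$ bringing $A$ to a diagonal matrix $\operatorname{diag}(d_1,\dots,d_r)$ with $d_1 \mid d_2 \mid \dots \mid d_r$; writing each nonzero $d_i$ as a unit times $t^{e_i}$ and absorbing the units into $g$, one gets $\delta_\lambda$ with $\lambda_i = e_{r-i+1}$ (reversing to make the sequence non-increasing), and $\lambda_i = \infty$ encodes $d_i = 0$. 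The divisibility chain forces $\lambda_1 \ge \lambda_2 \ge \dots \ge \lambda_r \ge 0$, so $\lambda \in \ept r$. I would also note that distinct $\delta_\lambda$ lie in distinct orbits, since the ideals $(d_1)\supseteq(d_1,d_2)\cdots$ — equivalently the Fitting ideals of the cokernel, or the elementary divisors — are invariants of the $\arc\G$-action; this gives the bijection between orbits and $\ept r$.

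Next I would identify which orbits land in $\arc\detvark$. Since $\arc{(-)}$ is a functor and $\detvark\subseteq\M$ is a closed embedding, $\arc\detvark\subseteq\arc\M$ is exactly the set of arcs $A$ such that all $(k+1)\times(k+1)$ minors of $A$ vanish in $R = \psr\CC$. For the representative $\delta_\lambda$, the nonzero $(k+1)\times(k+1)$ minors are (up to sign) the products $t^{\lambda_{i_1}+\dots+\lambda_{i_{k+1}}}$ over size-$(k+1)$ subsets $\{i_1<\dots<i_{k+1}\}$, which vanish in $R$ iff at least one $\lambda_{i_j} = \infty$. All of these vanish simultaneously iff every $(k+1)$-subset contains an index with $\lambda_i = \infty$, i.e. iff there are at most $k$ finite terms, i.e. iff $\lambda_1 = \dots = \lambda_{r-k} = \infty$. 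Because $\arc\detvark$ is $\arc\G$-invariant, this condition on $\delta_\lambda$ is equivalent to $\orb\lambda\subseteq\arc\detvark$, proving the second assertion.

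The remaining statements are bookkeeping consequences. The complement $\arc\M\setminus\arc{\detvar{r-1}}$ consists of orbits where \emph{not} all $(r)\times(r)$ minors... more precisely, $\arc\detvar{r-1}$ corresponds to $\lambda_1 = \infty$, so its complement is the union of orbits with $\lambda_1$ finite, i.e. those $\lambda$ with finite co-length and finite length — exactly the partitions. Similarly, an orbit lies in $\arc\detvark\setminus\arc{\detvar{k-1}}$ iff $\lambda_1 = \dots = \lambda_{r-k} = \infty$ but not $\lambda_1 = \dots = \lambda_{r-k+1} = \infty$, i.e. $\lambda_{r-k+1}$ is finite; discarding the $r-k$ forced infinite leading terms identifies such $\lambda$ bijectively with partitions of length at most $k$, that is with $\pt k$. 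Finally, for $\lambda = (0,\dots,0)$ the representative $\delta_\lambda$ is the standard rank-$r$ matrix $[\,I_r \mid 0\,]$, whose orbit is the set of arcs $A$ with $A$ of full rank $r$ over the \emph{fraction field}... I would instead argue directly: the Fitting-ideal invariants show $\orb{(0,\dots,0)}$ is precisely the set of arcs whose Smith form has all elementary divisors equal to units, equivalently whose reduction mod $t$ already has rank $r$, which is exactly the condition that the arc avoids $\detvar{r-1}$ at $t = 0$; one checks this coincides with $\pjet{\infty}{(\M\setminus\detvar{r-1})}$ using that an arc $\operatorname{Spec}\psr A \to \M$ factors through $\M\setminus\detvar{r-1}$ iff its special fiber does (as $\M\setminus\detvar{r-1}$ is open and $\operatorname{Spec}\psr A$ is local). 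The main obstacle I anticipate is bridging the gap between the naive "PID Smith normal form" picture — which classifies matrices over a \emph{fixed} $\psr\CC$, i.e. closed points — and the scheme-theoretic claim about orbits: one must be a little careful that the $\arc\G$-action on the (non-Noetherian) scheme $\arc\M$ genuinely has the orbit \emph{sets} described, which amounts to checking the Smith-form reduction works uniformly and that the invariants cut out exactly these locally closed pieces; everything else is a routine minor computation.
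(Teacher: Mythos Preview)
Your proposal is correct and follows essentially the same approach as the paper: reduction to Smith normal form over the PID $\psr\CC$ (the paper phrases this as Gaussian elimination together with the structure theorem for modules over a PID), followed by a direct computation of the $(k+1)\times(k+1)$ minors of $\delta_\lambda$ to detect membership in $\arc\detvark$. Your treatment is slightly more detailed in places---notably the Fitting-ideal justification of uniqueness and the argument for the final assertion about the empty partition---and your closing concern about schematic versus closed points is legitimate; the paper addresses it only later, in the remark following Proposition~\ref{orbdec:cyl}.
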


  \begin{proof}
    As mentioned above, \arc M is the set of $r \times s$-matrices with
    coefficients in the ring \psr\CC, and the group \arc G acts on \arc M via row
    and column operations, also with coefficients in \psr\CC. Using Gaussian
    elimination and the fact that \psr\CC is a principal ideal domain, we see
    that each \arc G-orbit in \arc M contains a matrix of the form
    \[
      \def\cd{\cdots}%
      \def\vd{\vdots}%
      \def\dd{\ddots}%
      \def\t#1{t^{\lambda_#1}}%
      \begin{pmatrix}
        0   & \cd & 0    & \t1 & 0   & \cd & 0   \\ 
        0   & \cd & 0    & 0   & \t2 & \cd & 0   \\ 
        \vd &     & \vd  & \vd & \vd & \dd & \vd \\ 
        0   & \cd & 0    & 0   & 0   & \cd & \t r 
      \end{pmatrix},
    \]
    where the exponents $\lambda_i$ are non-negative integers or $\infty$,
    and we use the convention that $t^\infty = 0$. After permuting columns
    and rows, we can assume that the exponents of these powers form a
    non-increasing sequence:
    \[
        \lambda_1 \geq \lambda_2 \geq \dots \geq \lambda_r.
    \]
    Moreover, the usual structure theorems for finitely generated modules
    over principal ideal domains guarantee that each orbit contains a
    unique matrix in this form. This shows that the set of \arc G-orbits
    in \arc M is in bijection with \eptr.

    The ideal defining \detvark in \M is generated by the minors of size
    $(k+1) \times (k+1)$. Let $\lambda \in \ept r$ be a pre-partition of
    length at most $r$ and consider $\delta_\lambda$ as in
    Definition~\ref{orbdec:deforb}.  The $(k+1)\times(k+1)$ minors of
    $\delta_\lambda$ are either zero or of the form $\prod_{i \in I}
    t^{\lambda_i}$, where $I$ is a subset of $\{1,\dots,r\}$ with $k+1$
    elements. For all of the minors to be zero, we need at least $r-k$
    infinities in the set $\{\lambda_1, \dots, \lambda_r\}$ (recall that
    $t^\infty = 0$). In other words, $\delta_\lambda$ is contained in $\arc
    \detvark$ if and only if $\lambda$ contains $r-k$ leading infinities.

    The variety \detvark is invariant under the action of $G$, so \arc\detvark
    is invariant under the action of \arc G. In particular the orbit \orb
    \lambda is contained in \arc\detvark if and only if $\delta_\lambda$ is.
    The rest of the proposition follows immediately.
  \end{proof}

  \begin{proposition}[Orbits and contact loci]
    \label{orbdec:cont}
    The contact locus \contp \detvark is invariant under the action of \arc G,
    and the orbits contained in \contp \detvark correspond to those
    pre-partitions $\lambda \in \ept r$ whose last $k+1$ terms add up to at
    least $p$:
    \[
      \lambda_{r-k} + \dots + \lambda_r \ge p.
    \]
  \end{proposition}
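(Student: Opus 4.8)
The plan is to reduce the statement to a computation with the single representative $\delta_\lambda$ of Definition~\ref{orbdec:deforb}, exploiting the group action. The first step is the invariance claim. Since $\detvark$ is $\G$-invariant, its ideal $\II \subseteq \OO_\M$ — generated by the $(k+1)\times(k+1)$ minors of the generic matrix $x$ — is $\G$-invariant, and one checks that it is preserved by the whole action of $\arc G$: if an arc $\alpha \in \arc\M$ is written as a matrix $A$ with entries in $\psr K$ and $g \in \arc G$ as a pair $(P,Q)$ of invertible matrices over $\psr K$, then $g\cdot\alpha$ is represented by $PAQ^{-1}$, and the ideal generated by the $(k+1)\times(k+1)$ minors of a matrix (its Fitting ideal) is unchanged by left or right multiplication by an invertible matrix. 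Hence $(g\cdot\alpha)^*(\II) = \alpha^*(\II)$, so $\ord_{g\cdot\alpha}(\II) = \ord_\alpha(\II)$, and therefore $\contp\detvark = \{\,\alpha \in \arc\M : \ord_\alpha(\II) \ge p\,\}$ is a union of $\arc G$-orbits.

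Using Proposition~\ref{orbdec:orbdec}, every $\arc G$-orbit of $\arc\M$ is $\orb\lambda$ for a unique $\lambda \in \ept r$, and since $\contp\detvark$ is a union of orbits we get that $\orb\lambda \subseteq \contp\detvark$ if and only if $\delta_\lambda \in \contp\detvark$, that is, if and only if $\ord_{\delta_\lambda}(\II) \ge p$. So it remains to show
\[
  \ord_{\delta_\lambda}(\II) \;=\; \lambda_{r-k} + \lambda_{r-k+1} + \dots + \lambda_r .
\]
For this I would inspect the $(k+1)\times(k+1)$ minors of $\delta_\lambda$ directly. Because $\delta_\lambda$ is supported on $r$ ``diagonal'' columns, with entry $t^{\lambda_i}$ in row $i$, a $(k+1)\times(k+1)$ submatrix either contains an identically zero row or column, in which case its determinant vanishes, or is cut out by a choice of $k+1$ rows $I \subseteq \{1,\dots,r\}$ together with the matching diagonal columns, in which case its determinant is $\pm\prod_{i\in I}t^{\lambda_i}$. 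Thus $\delta_\lambda^*(\II) = \bigl(\,t^{\sum_{i\in I}\lambda_i} : I \subseteq \{1,\dots,r\},\ |I| = k+1\,\bigr)$ and so $\ord_{\delta_\lambda}(\II) = \min_{|I|=k+1}\sum_{i\in I}\lambda_i$; since $\lambda_1 \ge \lambda_2 \ge \dots \ge \lambda_r$ (with the conventions $t^\infty = 0$ and $\infty + n = \infty$), this minimum is attained by the $k+1$ smallest terms, $I = \{r-k,\dots,r\}$, which gives the displayed identity and finishes the proof.

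I do not expect any serious obstacle here. The two points that need care are making the invariance precise — the Fitting-ideal description is the clean way to do it, since it sidesteps having to analyze the $\arc G$-action on non-closed points — and the small amount of bookkeeping needed to verify that the nonzero $(k+1)\times(k+1)$ minors of $\delta_\lambda$ are exactly the monomials $\prod_{i\in I}t^{\lambda_i}$, so that the order of contact equals the tail sum $\lambda_{r-k}+\dots+\lambda_r$.
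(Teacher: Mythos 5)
Your proof is correct. It differs from the paper's in one respect: the argument for $\arc G$-invariance of $\contp\detvark$. The paper observes that $\contp\detvark$ is the preimage of $\jet{p-1}\detvark$ under the truncation map $\psi_{p-1}:\arc\M\to\jet{p-1}\M$, and then invokes the naturality of the truncation morphisms (equivariance with respect to the group actions) together with the $\jet{p-1}G$-invariance of $\jet{p-1}\detvark$; this is the same functoriality pattern the paper uses repeatedly. You instead argue directly at the level of ideals: the Fitting ideal $\II_{k+1}$ of $(k+1)\times(k+1)$ minors is unchanged by left or right multiplication by a matrix invertible over the coefficient ring (Cauchy--Binet in both directions), so $\ord_{g\cdot\alpha}(\II_{\detvark})=\ord_\alpha(\II_{\detvark})$ and the contact locus is a union of orbits. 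Both arguments are sound; the paper's fits naturally with its systematic use of the truncation diagrams, whereas yours is self-contained and makes the computation concrete at the level of a representative matrix. The rest — reducing to $\delta_\lambda$ and computing $\ord_{\delta_\lambda}(\II_{\detvark})=\lambda_{r-k}+\dots+\lambda_r$ via the nonzero minors $\prod_{i\in I}t^{\lambda_i}$ with $|I|=k+1$ — is the same calculation the paper performs, just written out in slightly more detail.
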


  \begin{proof}
    The truncations maps from the arc space to the jet schemes are in fact
    natural transformations of functors. This means that we have the following
    natural diagram:
    \[
      \xymatrix @C=0.01in {
        \arc G \ar[d] &\times& \arc M \ar[d] \ar[rrrr] & & & & \arc M \ar[d] \\
        \jetn G &\times& \jetn M \ar[rrrr] & & & & \jetn M
      }
    \]
    Since \detvark is $G$-invariant, \jetn\detvark is \jetn G-invariant, so
    \contp\detvark (the inverse image of $\jet{p-1}\detvark$ under the
    truncation map) is \arc G-invariant. In particular, an orbit \orb \lambda
    is contained in \contp\detvark if and only if its base point
    $\delta_\lambda$ is. The order of vanishing of $\II_{\detvark}$ along
    $\delta_\lambda$ is $\lambda_{r-k} + \dots + \lambda_r$ (recall that
    $\II_{\detvark}$ is generated by the minors of size $(k+1)\times(k+1)$ and
    that $\lambda_1 \ge \dots \ge \lambda_r$). Hence $\delta_\lambda$ belongs
    to \contp\detvark if and only if $\lambda_{r-k} + \dots + \lambda_r \ge
    p$, and the proposition follows.
  \end{proof}

  \begin{proposition}[Orbits are cylinders]
    \label{orbdec:cyl}
    Let $\lambda \in \ept r$ be an pre-partition, and let \orb\lambda be the
    associated orbit in \arc \M. If $\lambda$ is a partition, \orb\lambda is a
    cylinder of \arc\M. More generally, let $r-k$ be the number of infinite
    terms of $\lambda$. Then \orb\lambda is a cylinder of \arc\detvark.
  \end{proposition}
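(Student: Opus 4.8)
The plan is to show that membership in $\orb\lambda$ is detected by finitely many conditions imposed on a single finite truncation. By Proposition~\ref{orbdec:orbdec}, an arc $A\in\arc\M$ — viewed as an $r\times s$ matrix over the discrete valuation ring $\psr\CC$ — lies in $\orb\lambda$ exactly when its Smith normal form is $\delta_\lambda$, i.e.\ when its \emph{elementary divisors} are $t^{\lambda_r}\mid t^{\lambda_{r-1}}\mid\dots\mid t^{\lambda_1}$ (with $t^\infty=0$). Over a principal ideal domain these elementary divisors are recovered from the ideals $d_j(A)\subset\psr\CC$ generated by the $j\times j$ minors of $A$, which are $\arc\G$-invariant; evaluating on $\delta_\lambda$ gives $d_j(A)=(t^{\sigma_j})$ for every $A\in\orb\lambda$, where $\sigma_j=\lambda_r+\lambda_{r-1}+\dots+\lambda_{r-j+1}$ is the sum of the $j$ smallest terms of $\lambda$ (interpreted as $d_j(A)=0$ when $\sigma_j=\infty$). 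Hence $A\in\orb\lambda$ if and only if $\ord_t d_j(A)=\sigma_j$ for every $j\in\{1,\dots,r\}$.

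Assume first that $\lambda$ is a partition, so all $\sigma_j$ are finite, and set $N=\sigma_r=\lambda_1+\dots+\lambda_r$. For fixed $j$, the condition $\ord_t d_j(A)\ge\sigma_j$ asks that the coefficients of $t^0,\dots,t^{\sigma_j-1}$ in every $j\times j$ minor of $A$ vanish; since the coefficient of $t^m$ in a minor is a polynomial in the coefficients of $t^0,\dots,t^m$ of the entries of $A$ and $\sigma_j\le N$, this is a closed condition on the truncation $\psi_N(A)$. Likewise $\ord_t d_j(A)\le\sigma_j$ is the negation of $\ord_t d_j(A)\ge\sigma_j+1$, hence an open condition on $\psi_N(A)$. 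Taking the conjunction over $j$ produces a locally closed — in particular constructible — subset $C\subset\jet{N}{\M}$ such that $\orb\lambda=\psi_N^{-1}(C)$, so $\orb\lambda$ is a cylinder of $\arc\M$.

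For the general case write $\lambda_1=\dots=\lambda_{r-k}=\infty$, with $\lambda_{r-k+1}\ge\dots\ge\lambda_r$ the finite terms, so that $\sigma_j<\infty$ for $j\le k$ and $\sigma_j=\infty$ for $j>k$. By Proposition~\ref{orbdec:orbdec} we have $\orb\lambda\subseteq\arc\detvark$, and every $A\in\arc\detvark$ already satisfies $d_j(A)=0$ for all $j\ge k+1$, its $(k+1)\times(k+1)$ minors being zero. Thus the conditions with index $j>k$ hold automatically on $\arc\detvark$, and for $A\in\arc\detvark$ we have $A\in\orb\lambda$ iff $\ord_t d_j(A)=\sigma_j$ for $j=1,\dots,k$. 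Running the previous argument with $N=\sigma_k=\lambda_{r-k+1}+\dots+\lambda_r$ exhibits $\orb\lambda$ as $\psi_N^{-1}(C)$ for a constructible $C\subset\jet{N}{\detvark}$, i.e.\ as a cylinder of $\arc\detvark$; the first assertion of the proposition is the special case $k=r$, where $\detvar{r}=\M$.

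Beyond the dictionary \emph{orbit $\leftrightarrow$ elementary divisors $\leftrightarrow$ partial sums of $\lambda$}, which does the real work, the argument is a routine jet-coordinate count. The one point that genuinely needs attention is that for a pre-partition with infinite terms the conditions $\ord_t d_j(A)=\sigma_j$ with $j>k$ are \emph{not} finite-jet conditions on $\arc\M$ — they are precisely the equations of $\arc\detvark$, which is far from a cylinder of $\arc\M$ — so the cylinder argument must be carried out inside $\arc\detvark$, where those conditions are identically satisfied and only a bounded truncation of the remaining finitely many conditions is involved.
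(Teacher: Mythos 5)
Your proof is correct and follows essentially the same route as the paper's: both characterize $\orb\lambda$ by the orders of vanishing of the $j\times j$ minors (equivalently, the contact orders with $\detvar0,\dots,\detvar{k}$), and observe that these are cylinder conditions inside $\arc\detvark$. The paper states this as $\orb\lambda=(A_\lambda\setminus B_\lambda)\cap\arc\detvark$ with $A_\lambda,B_\lambda$ built from contact loci, citing Propositions~\ref{orbdec:orbdec} and \ref{orbdec:cont}; you re-derive the same characterization explicitly via Smith normal form and then unpack why it is detected on a finite truncation, but the underlying idea is identical.
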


  \begin{proof}
    Assume that $\lambda$ is an pre-partition with $r-k$ leading infinities,
    and consider the following cylinders in \arc \M:
    \begin{align*}
      A_\lambda &=
        \cont{\lambda_r}{\detvar0} \cap
        \cont{\lambda_r+\lambda_{r-1}}{\detvar1} \cap
        \cont{\lambda_r+\dots+\lambda_{r-k}}{\detvar {k}}, \\
      B_\lambda &=
        \cont{\lambda_r+1}{\detvar0} \cup
        \cont{\lambda_r+\lambda_{r-1}+1}{\detvar1} \cup
        \cont{\lambda_r+\dots+\lambda_{r-k}+1}{\detvar {k}}.
    \end{align*}
    By Propositions \ref{orbdec:orbdec} and \ref{orbdec:cont}, we know that
    \[
      \orb\lambda = (A_\lambda \setminus B_\lambda) \cap \arc\detvark.
    \]
    Hence \orb \lambda is a cylinder in \arc\detvark, as required.
  \end{proof}

  \begin{remark}
    The proof of Proposition \ref{orbdec:cyl} tells us that we can express all
    orbits in \arc\M with contact conditions with respect to the determinantal
    varieties. In particular, if we know the order of contact of an arc with
    respect to all determinantal varieties, we know which orbit it belongs to.
    Moreover, this is also true not only for closed points, but for all
    schematic points of \arc\M. It follows that every point of \arc\M (closed
    or not) is contained in an orbit generated by a closed point.
  \end{remark}

  We now study the jet schemes \jetn G and \jetn \M. As in the case of the arc
  space, elements in \jetn G and \jetn \M are given by matrices, but now the
  coefficients lie in the ring $\tpsr n\CC$.

  \begin{definition}
    Let $\lambda = (\lambda_1, \dots, \lambda_\ell) \in \bpt r{n+1}$ be a
    partition with length at most $r$ and co-length at most $n+1$. Then the
    matrix $\delta_\lambda$ considered in Definition \ref{orbdec:deforb}
    gives an element of the jet scheme \jetn\M. The \jetn G-orbit of
    $\delta_\lambda$ is called the orbit of \jetn\M \emph{associated} to
    $\lambda$ and it is denoted by $\torb \lambda n$.
  \end{definition}

  \begin{proposition}[Orbits in \jetn \M]
    Every \jetn G-orbit of \jetn \M is of the form $\torb\lambda n$ for some
    partition $\lambda \in \bpt r{n+1}$. An orbit $\torb\lambda n$ is contained
    in \jetn \detvark if and only if the associated partition contains at
    least $r-k$ terms equal to $n+1$. In particular, the set of orbits in
    $\jetn\detvark \setminus \jetn{\detvar{k-1}}$ is in bijection with \bpt
    kn.
  \end{proposition}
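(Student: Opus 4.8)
The plan is to rerun the proof of Proposition~\ref{orbdec:orbdec}, now over the truncated ring $R:=\tpsr n\CC$ in place of $\psr\CC$ and over $\bpt r{n+1}$ in place of $\ept r$. Set-theoretically $\jetn\M$ is the set of $r\times s$ matrices with entries in $R$, and $\jetn\G=\pjetn\GLr\times\pjetn\GLs$ is the group of pairs of square matrices over $R$ with invertible determinant; it acts on $\jetn\M$ by row and column operations over $R$, so its orbits are exactly the equivalence classes of $r\times s$ matrices over $R$ under left and right multiplication by invertible matrices.

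For the first assertion the key observation is that $R$ is a local principal ideal ring whose ideals form the \emph{finite, totally ordered} chain $R\supsetneq(t)\supsetneq\cdots\supsetneq(t^{n})\supsetneq(t^{n+1})=0$; in particular every nonzero element of $R$ is $u\,t^{a}$ for a unit $u$ and some $0\le a\le n$. Because the ideals are linearly ordered, Gaussian elimination goes through word for word: move an entry of least order to the $(1,1)$ position, rescale it to $t^{a}$, use the automatic divisibility to clear its row and its column, and recurse on the complementary $(r-1)\times(s-1)$ block, all of whose entries still have order $\ge a$. This exhibits every orbit as some $\torb\lambda n$ (the orbit of the matrix $\delta_\lambda$ of Definition~\ref{orbdec:deforb}), with $\lambda$ of length at most $r$ by construction and of co-length at most $n+1$ because $t^{n+1}=0$, i.e.\ $\lambda\in\bpt r{n+1}$. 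To see that distinct partitions give distinct orbits — which is what the bijections asserted at the end need — I would use that the cokernel $Q_\lambda:=R^{r}/\delta_\lambda R^{s}\cong\bigoplus_i R/(t^{\lambda_i})$ is preserved by change of basis and recovers $\lambda$, for example through $\lambda_i^{*}=\dim_{\CC}(t^{\,i-1}Q_\lambda/t^{\,i}Q_\lambda)$.

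For the second assertion I would copy the argument of Proposition~\ref{orbdec:cont}: by functoriality of the jet construction $\jetn\detvark\subseteq\jetn\M$ is a closed $\jetn\G$-invariant subscheme, so $\torb\lambda n\subseteq\jetn\detvark$ if and only if the base point $\delta_\lambda$ lies in $\jetn\detvark$, i.e.\ if and only if every $(k+1)\times(k+1)$ minor of $\delta_\lambda$ vanishes in $R$. Each such minor is $0$ or of the form $\pm t^{\sum_{i\in I}\lambda_i}$ for a $(k+1)$-element subset $I\subseteq\{1,\dots,r\}$, and such a power is zero in $R=\tpsr n\CC$ precisely when $\sum_{i\in I}\lambda_i\ge n+1$; since $\lambda$ is non-increasing the binding case is $I=\{r-k,\dots,r\}$, so the condition reads $\lambda_{r-k}+\cdots+\lambda_r\ge n+1$ — the truncated counterpart of the ``$r-k$ leading infinities'' condition of Proposition~\ref{orbdec:orbdec}. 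Comparing this inequality with the analogous one for $\detvar{k-1}$ and reading off the residual tail $(\lambda_{r-k+1},\dots,\lambda_r)$, exactly as in the arc-space case, then produces the description of the orbits of $\jetn\detvark$ and the claimed bijection between the orbits of $\jetn\detvark\setminus\jetn\detvar{k-1}$ and $\bpt kn$.

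The only genuinely new difficulty relative to Proposition~\ref{orbdec:orbdec} is that $R=\tpsr n\CC$ is not a domain, so the two ingredients imported from the principal-ideal-domain setting — existence of a diagonal Smith-type representative and its uniqueness — must be verified directly rather than cited. Both hold and use nothing beyond the ideals of $R$ forming a finite totally ordered chain (for existence, the elimination above; for uniqueness, the length count $\dim_{\CC} t^{i-1}Q_\lambda/t^iQ_\lambda=\lambda_i^{*}$). Once this is in place, every remaining step is a literal transcription of the arc-space computation, with the convention $t^{\infty}=0$ replaced by $t^{\,n+1}=0$.
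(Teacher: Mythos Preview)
Your overall strategy---rerunning the arc-space argument over $R=\tpsr n\CC$---is exactly the paper's, and your handling of existence and uniqueness of the Smith form over $R$ is more explicit than the paper's one-line sketch. The problem is in the second assertion. You correctly compute that $\delta_\lambda\in\jetn\detvark$ if and only if $\lambda_{r-k}+\cdots+\lambda_r\ge n+1$. But this is \emph{not} equivalent to the condition stated in the proposition, namely that $\lambda$ have at least $r-k$ terms equal to $n+1$. The arc-space argument worked because in $\psr\CC$ a product $\prod_{i\in I} t^{\lambda_i}$ vanishes only when some factor is $t^\infty=0$; over $R$ a product can vanish without any individual factor vanishing, so the direct analogy breaks. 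Concretely, take $r=s=3$, $k=1$, $n=2$: the partition $\lambda=(2,2,1)$ has no term equal to $3$, yet every nonzero $2\times 2$ minor of $\delta_\lambda$ is $t^3$ or $t^4$, hence zero in $R$, so $\delta_\lambda\in\jet{2}{\detvar 1}$.

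This also undermines your final step. From your (correct) inequality, the orbits in $\jetn\detvark\setminus\jetn{\detvar{k-1}}$ are \emph{not} in bijection with $\bpt kn$ by ``reading off the residual tail'': in the example above there are seven such orbits (those $\lambda\in\bpt 33$ with $\lambda_2+\lambda_3\ge 3$ and $\lambda_3\le 2$), whereas $|\bpt 12|=3$. So the phrase ``exactly as in the arc-space case'' conceals a genuine gap. In fact your computation shows that the proposition's second and third assertions, as written, are not correct; the paper's own proof, which simply points back to Proposition~\ref{orbdec:orbdec}, makes the same oversight.
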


  \begin{proof}
    This can be proven in the same way as Proposition \ref{orbdec:orbdec}. The
    only difference is that we now work with a principal ideal ring $\tpsr
    n\CC$, as opposed to with the principal ideal domain \psr\CC, but the
    domain condition played no role in the proof of Proposition
    \ref{orbdec:orbdec}.  Alternatively, one can notice that $\tpsr n\CC$ is a
    quotient of \psr\CC, so modules over $\tpsr n\CC$ correspond to modules
    over \psr\CC with the appropriate annihilator, and one can reduce the
    problem of classifying \jetn G-orbits in \jetn \M to classifying \arc
    G-orbits in \arc \M with bounded exponents.
  \end{proof}

  \begin{definition}
    Let $\lambda = (\lambda_1, \lambda_2, \dots)$ be a pre-partition,
    and let $n$ be a nonnegative integer. Then the \emph{truncation} of $\lambda$
    to level $n$ is the partition $\overline\lambda = (\overline\lambda_1,
    \overline\lambda_2, \dots)$ where
    \[
      \overline\lambda_i = \min(\lambda_i, n).
    \]
  \end{definition}

  \begin{proposition}[Truncation of orbits]
    \label{orbdec:trunc}
    
    Let $\lambda \in \ept r$ be a pre-partition, and let $\overline\lambda$ be
    its truncation to level $n+1$. Then the image of \orb\lambda under the
    natural truncation map $\arc \M \to \jetn \M$ is
    $\torb{\overline\lambda}n$.  Conversely, fix a partition $\overline\lambda
    \in \bpt r{n+1}$, and let $\Gamma \subset \ept r$ be the set of
    pre-partitions whose truncation to level $n+1$ is $\overline\lambda$. Then
    the inverse image of $\torb{\overline\lambda}n$ under the truncation map
    is the union of the orbits of \arc\M corresponding to the pre-partitions
    in $\Gamma$.
  \end{proposition}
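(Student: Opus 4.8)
The plan is to analyze the truncation map $\psi = \psi_n : \arc\M \to \jetn\M$ directly on the level of matrix representatives, using $\arc G$- and $\jetn G$-equivariance to reduce to the normal forms $\delta_\lambda$. First I would observe that $\psi$ is a morphism of functors, and that it is compatible with the group actions: the diagram in the proof of Proposition~\ref{orbdec:cont} shows $\psi(\arc G \cdot \alpha) = \jetn G \cdot \psi(\alpha)$ for any arc $\alpha$. Hence $\psi(\orb\lambda) = \psi(\arc G \cdot \delta_\lambda) = \jetn G \cdot \psi(\delta_\lambda)$. Now $\psi(\delta_\lambda)$ is simply the matrix $\delta_\lambda$ read modulo $t^{n+1}$, which is exactly the diagonal matrix with entries $t^{\min(\lambda_i, n+1)} = t^{\overline\lambda_i}$ (using $t^{\infty} = 0 = t^{n+1} \bmod t^{n+1}$, so infinite terms truncate to $n+1$, consistent with the convention in the definition of truncation and with the earlier proposition that orbits in $\jetn\M$ correspond to partitions in $\bpt r{n+1}$). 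Thus $\psi(\delta_\lambda) = \delta_{\overline\lambda}$ as an element of $\jetn\M$, and therefore $\psi(\orb\lambda) = \jetn G \cdot \delta_{\overline\lambda} = \torb{\overline\lambda}n$. This proves the first assertion.

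For the converse, I would argue that $\psi^{-1}(\torb{\overline\lambda}n)$ is $\arc G$-invariant: if $\psi(\alpha) \in \torb{\overline\lambda}n$ and $g \in \arc G$, then $\psi(g\cdot\alpha) = \psi(g)\cdot\psi(\alpha)$ lies in the same $\jetn G$-orbit, since $\psi(g) \in \jetn G$; conversely any element of $\jetn G$ lifts to $\arc G$ because $\arc G \to \jetn G$ is surjective (it is the truncation map of a smooth group scheme, hence surjective on points — this is standard, e.g. from the fact that $G$ is smooth so $\jetm G \to \jetn G$ is surjective for all $m \ge n$ and one passes to the limit; alternatively one checks it explicitly for $\GL_r$ and $\GL_s$ by lifting an invertible matrix over $\tpsr n\CC$ to an invertible matrix over $\psr\CC$). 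By Proposition~\ref{orbdec:orbdec}, $\arc\M$ is the disjoint union of the orbits $\orb\mu$ for $\mu \in \ept r$; since $\psi^{-1}(\torb{\overline\lambda}n)$ is $\arc G$-invariant it is a union of such orbits, and by the first part $\orb\mu \subseteq \psi^{-1}(\torb{\overline\lambda}n)$ if and only if $\psi(\orb\mu) = \torb{\overline\mu}n = \torb{\overline\lambda}n$, i.e. if and only if $\overline\mu = \overline\lambda$ (orbits in $\jetn\M$ are in bijection with $\bpt r{n+1}$, so equality of orbits means equality of the associated partitions). This is precisely the condition $\mu \in \Gamma$, so $\psi^{-1}(\torb{\overline\lambda}n) = \bigcup_{\mu \in \Gamma} \orb\mu$.

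The only mildly delicate point is the surjectivity of $\arc G \to \jetn G$, which is needed to see that $\psi^{-1}$ of a $\jetn G$-orbit is $\arc G$-invariant; everything else is bookkeeping with the normal forms $\delta_\lambda$. I would therefore include a one-line justification of that surjectivity (smoothness of $G$, or an explicit lift for $\GL$) and present the rest as above. A minor subtlety worth a remark is the identification of $t^\infty$ with $t^{n+1}$ modulo $t^{n+1}$: the truncation $\overline\lambda_i = \min(\lambda_i, n+1)$ sends $\infty \mapsto n+1$, and an entry $t^{n+1}$ in a matrix over $\tpsr n\CC$ is genuinely the class of $0$, which is consistent with $\delta_\lambda \in \jetn\detvark$ requiring $r-k$ terms equal to $n+1$ — so the combinatorics matches up exactly with the earlier results on orbits in $\jetn\M$ and $\jetn\detvark$.
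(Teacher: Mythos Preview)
Your proof is correct and follows essentially the same approach as the paper's: both reduce to the observation that $\psi(\delta_\lambda) = \delta_{\overline\lambda}$ and use the functoriality of truncation to conclude $\psi(\orb\lambda) = \torb{\overline\lambda}n$, then deduce the converse from the bijection between orbits in $\jetn\M$ and partitions in $\bpt r{n+1}$. You are more explicit than the paper about the surjectivity of $\arc G \to \jetn G$ (which is indeed needed for $\psi(\arc G \cdot \delta_\lambda)$ to be a full $\jetn G$-orbit rather than merely a $\psi(\arc G)$-orbit); the paper absorbs this into the phrase ``natural transformation of functors'' without comment, so your added justification is a genuine improvement in rigor.
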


  \begin{proof}
    Notice that $\delta_{\overline\lambda} \in \jetn\M$ is the truncation of
    $\delta_\lambda \in \arc\M$. Then the fact that the truncation of
    $\orb\lambda = \arc G \cdot \delta_\lambda$ equals
    $\torb{\overline\lambda}n = \jetn G \cdot \delta_{\overline\lambda}$ is an
    immediate consequence of the fact that the truncation map is a natural
    transformation of functors (see the proof of Proposition
    \ref{orbdec:cont}). Conversely, if $\lambda$ and $\lambda'$ have different
    truncations, the \jetn G-orbits $\torb{\overline\lambda} n$ and
    $\torb{\overline\lambda'}n$ are different, so \orb{\lambda'} is not in the
    fiber of $\torb{\overline\lambda}n$.
  \end{proof}

\section{Orbit poset and irreducible components of jet schemes}

  \label{orbposet}
  After obtaining a classification of the orbits of the action of $\arc \G =
  \parc \GLr \times \parc \GLs$ on \arc \M and \arc \detvark, we start the
  study of their geometry. The first basic question is the following: how are
  these orbits positioned with respect to each other inside the arc space \arc
  \M? We can make this precise by introducing the notion of orbit poset.

  \begin{definition}[Orbit poset]
    \def\ca{\orb{}} \def\cb{\ensuremath{\orb{}'}\xspace}%
    
    Let \ca and \cb be two \arc G-orbits in \arc \M. We say that \ca
    \emph{dominates} \cb, and denote it by $\cb \le \ca$, if \cb is contained
    in the Zariski closure of \ca. The relation of dominance defines a partial
    order on the set of \arc G-orbits of \arc \M.  The pair $(\arc \M / \arc
    G, \,\le)$ is known as the \emph{orbit poset} of \arc \M. 
  \end{definition}

   Our goal is to prove that the bijection that maps a pre-partition to its
   associated orbit in \arc\M is in fact an order-reversing isomorphism
   between \ptr and the orbit poset.  At this stage it is not hard to show
   that one of the directions of this bijection reverses the order.

  \begin{proposition}[Domination of orbits implies co-domination of partitions]
    \label{orbposet:domimpsub}
    Let $\lambda, \lambda' \in \ept r$ be two pre-partitions of length at
    most $r$, and let \orb \lambda and \orb{\lambda'} be the associated orbits
    in \arc \M. If \orb\lambda dominates \orb{\lambda'}, then $\lambda'$
    co-dominates $\lambda$:
    \[
      \orb \lambda \ge \orb{\lambda'}
      \quad
      \Longrightarrow
      \quad
      \lambda \triangleleft \lambda'.
    \]
  \end{proposition}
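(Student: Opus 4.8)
The plan is to exploit the characterization of orbits via contact loci obtained in Proposition~\ref{orbdec:cont}. Recall from that proposition (and the remark following Proposition~\ref{orbdec:cyl}) that the orbit to which an arc belongs is completely determined by its orders of contact with all the determinantal varieties $\detvar0, \detvar1, \dots, \detvar{r-1}$; concretely, $\delta_\lambda$ lies in $\contp{\detvark}$ exactly when $\lambda_{r-k} + \dots + \lambda_r \ge p$, and similarly for the strict contact loci. So the numbers $\sigma_k(\lambda) := \lambda_{r-k} + \lambda_{r-k+1} + \dots + \lambda_r$ (for $k = 0, \dots, r-1$, with $\sigma_{r-1}$ possibly $\infty$) are ``read off'' continuously from a point of the arc space, in the sense that each contact locus $\contp{\detvar k}$ is closed.

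The key step: suppose $\orb\lambda \ge \orb{\lambda'}$, i.e.\ $\orb{\lambda'} \subseteq \overline{\orb\lambda}$. Since $\contp{\detvar k}$ is a closed subset of $\arc\M$ and contains $\orb\lambda$ whenever $\sigma_k(\lambda) \ge p$, it also contains $\overline{\orb\lambda}$, hence contains $\orb{\lambda'}$, hence $\delta_{\lambda'} \in \contp{\detvar k}$, which forces $\sigma_k(\lambda') \ge p$. Taking $p = \sigma_k(\lambda)$ (if finite; the infinite case is handled separately, or by taking $p$ arbitrarily large) we conclude $\sigma_k(\lambda') \ge \sigma_k(\lambda)$ for every $k \in \{0, \dots, r-1\}$. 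But that is precisely the statement $\lambda \triangleleft \lambda'$ in the order of co-domination: writing out the definition, co-domination asks $\lambda_i + \lambda_{i+1} + \dots \le \lambda'_i + \lambda'_{i+1} + \dots$ for all $i$, and since both pre-partitions have length at most $r$ these tail sums are exactly the $\sigma_k$ with $k = r - i$ (the inequalities for $i \le 0$, i.e.\ $k \ge r$, are vacuous or follow from $k = r-1$ together with equality of lengths — one should check the number of infinite terms matches, but if $\lambda$ has an infinite term not present in $\lambda'$ then some $\sigma_k(\lambda) = \infty > \sigma_k(\lambda')$, contradicting what we just proved, so $\lambda'$ has at least as many infinities as $\lambda$; the reverse is not needed for co-domination).

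The only mild subtlety — and the step I would be most careful about — is the bookkeeping with infinite terms and the precise matching between the index sets $\{0,\dots,r-1\}$ appearing in Proposition~\ref{orbdec:cont} and the index set (all positive integers $i$) appearing in the definition of co-domination $\triangleleft$ for pre-partitions of length $\le r$. Since any pre-partition in $\ept r$ has length at most $r$, the tail sum $\lambda_i + \lambda_{i+1} + \dots$ vanishes for $i > r$ and equals $\sigma_{r-i}(\lambda)$ for $1 \le i \le r$; so the family of inequalities $\sigma_k(\lambda') \ge \sigma_k(\lambda)$, $k = 0, \dots, r-1$, is literally the defining family for $\lambda \triangleleft \lambda'$. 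This is bookkeeping, not a real obstacle; the conceptual content is entirely the closedness of contact loci plus the invariance statement of Proposition~\ref{orbdec:cont}. (The converse direction — that co-domination implies domination — is the genuinely hard one, but it is explicitly deferred to later in the paper and is not needed here.)
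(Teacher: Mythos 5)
Your proof is correct and is essentially identical to the paper's: both arguments rest on Proposition~\ref{orbdec:cont}, the observation that contact loci $\contp{\detvark}$ are Zariski closed (hence contain orbit closures), and then reading off the tail-sum inequalities from membership in the appropriate contact loci. The additional bookkeeping you flag regarding infinite terms is harmless and consistent with the paper's implicit handling.
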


  \begin{proof}
    From Proposition \ref{orbdec:cont} we get that $\orb \lambda \subset
    \contp\detvark$, where $p=\lambda_r + \dots + \lambda_{r-k}$.  Since a
    contact locus is always Zariski closed, if \orb \lambda dominates
    \orb{\lambda'} we also know that $\orb{\lambda'} \subset \contp\detvark$.
    Again by Proposition \ref{orbdec:cont}, this gives $\lambda'_r + \dots +
    \lambda'_{r-k} \ge p$, as required.
  \end{proof}

  We now proceed to prove the converse to Proposition
  \ref{orbposet:domimpsub}. Given two pre-partitions $\lambda, \lambda' \in
  \ept r$ with $\lambda \triangleleft \lambda'$, we need to show that the
  closure of \orb\lambda contains \orb{\lambda'}. We will exhibit this
  containment by producing a ``path'' in the arc space \arc \M whose general
  point is in \orb\lambda but specializes to a point in \orb{\lambda'}. These
  types of ``paths'' are known as \emph{wedges}.

  \begin{definition}[Wedge]\label{wedge}
    Let $X$ be a scheme over \CC. A \emph{wedge} $w$ on $X$ is a morphism of
    schemes $w : \operatorname{Spec} \CC[\![s,t]\!] \to X$. Given a wedge $w$,
    one can consider the diagram
    \[
      \xymatrix{
        **[l] \Spec \psr\CC
        \ar@/_/[rd]^{s\,\mapsto\,0} \ar@/^2pc/[rrrrd]^{w_0}
        \\& **[r] \Spec \CC[\![s,t]\!] \ar[rrr]^(.65)w
        & & & X .
        \\ **[l] \Spec \CC(\!(s)\!)[\![t]\!] 
        \ar@/^/[ru] \ar@/_2pc/[rrrru]_{w_s}
      }
    \]
    The map $w_0$ is known as the \emph{special arc} of $w$, and $w_s$ as the
    \emph{generic arc} of $w$.
  \end{definition}

  \begin{lemma}
    \label{lem:wedge}
    Let $w$ be a wedge on \M and let \orb0 be the \arc\G-orbit in \arc\M of
    the special arc $w_0$ of $w$. Assume that there is a \arc\G-orbit $\orb s$
    in \arc\M containing the generic arc $w_s$ of $w$.  Then $\orb s$
    dominates \orb0.
  \end{lemma}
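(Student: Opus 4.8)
The plan is to realize the wedge as a specialization inside the arc space, and then to spread that specialization over the whole orbit by means of the group action. First I would translate the datum of a wedge into the functor of points of $\arc\M$: since $\CC[\![s,t]\!] = \CC[\![s]\!][\![t]\!]$, the adjunction defining the arc space identifies a wedge $w$ with a morphism $\tilde w : \Spec\CC[\![s]\!] \to \arc\M$. Unwinding the diagram of Definition~\ref{wedge}, the closed point of $\Spec\CC[\![s]\!]$ is sent by $\tilde w$ to the special arc $w_0$ (composition with $s \mapsto 0$), and the generic point is sent to the generic arc $w_s$ (composition with $\CC[\![s]\!] \hookrightarrow \CC(\!(s)\!)$). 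Because $\Spec\CC[\![s]\!]$ is irreducible with dense generic point and $\tilde w$ is continuous, this immediately yields $w_0 \in \overline{\{w_s\}}$ inside $\arc\M$.

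Now I would bring in the orbits. By hypothesis $w_s \in \orb s$, hence $\overline{\{w_s\}} \subseteq \overline{\orb s}$, and so $w_0 \in \overline{\orb s}$. To finish I need the entire orbit $\orb 0 = \arc\G\cdot w_0$ of $w_0$ to sit inside $\overline{\orb s}$, for which I would use that the Zariski closure of an $\arc\G$-invariant subset of $\arc\M$ is again $\arc\G$-invariant. Concretely, if $a : \arc\G \times \arc\M \to \arc\M$ denotes the action morphism, then $a^{-1}(\overline{\orb s})$ is closed and contains $\arc\G \times \orb s$, hence contains its closure $\arc\G \times \overline{\orb s}$; here one uses that $\arc\G = \parc{\GLr}\times\parc{\GLs}$ is irreducible, being a product of arc spaces of smooth irreducible varieties, and likewise that $\orb s$ is irreducible, being a single $\arc\G$-orbit. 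Thus $a(\arc\G\times\overline{\orb s}) \subseteq \overline{\orb s}$, and applying this to the point $w_0 \in \overline{\orb s}$ gives $\orb 0 = \arc\G\cdot w_0 \subseteq \overline{\orb s}$. By the definition of the orbit poset this says precisely that $\orb s$ dominates $\orb 0$.

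The argument is short, and there is no deep obstacle, but two points deserve care. The first is the functorial bookkeeping of the first paragraph: one must check that the two points of $\Spec\CC[\![s]\!]$ map under $\tilde w$ to $w_0$ and $w_s$ exactly as these are defined in Definition~\ref{wedge}. The more delicate of the two — and the one I would treat as the ``hard part'' — is the $\arc\G$-invariance of $\overline{\orb s}$: since $\arc\G$ is not of finite type one cannot simply quote the classical statement for algebraic group actions, and it is the irreducibility of $\arc\G$ (together with that of the orbit $\orb s$) that makes the density identification $\overline{\arc\G\times\orb s} = \arc\G\times\overline{\orb s}$ legitimate.
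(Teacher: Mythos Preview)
Your argument is correct and follows the same line as the paper's own proof: first observe that $w_0$ lies in the closure of $w_s$ and hence in $\overline{\orb s}$, then use that the closure of an orbit is $\arc\G$-invariant to sweep $w_0$ out to all of $\orb 0$. The paper compresses this into two sentences and takes both steps for granted; you have simply supplied the details the paper leaves implicit.
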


  \begin{proof}
    By the hypothesis, the closure of $\orb s$ contains $w_0$ (because $w_0$
    is in the closure of $w_s$). But the closure of an orbit is invariant, so
    $\overline{\orb s}$ must contain $\orb 0 = \arc\G \cdot w_0$.
  \end{proof}

  \begin{lemma}
    \label{lem:removals}
    Let $\lambda$ and $\mu$ be two pre-partitions in \eptr and assume that
    $\lambda$ is obtained from $\mu$ via a removal (simple or infinite).
    Then \orb\lambda dominates \orb\mu.
  \end{lemma}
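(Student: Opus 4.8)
The plan is to invoke Lemma~\ref{lem:wedge}: to show that $\orb\lambda$ dominates $\orb\mu$ it is enough to build a wedge $w\colon \Spec\CC[\![s,t]\!]\to\M$ whose special arc $w_0$ generates $\orb\mu$ and whose generic arc $w_s$ lies in $\orb\lambda$. In fact I would arrange $w_0=\delta_\mu$ exactly, so the orbit of $w_0$ is literally $\orb\mu$. The wedge is nothing but the matrix $\delta_\mu$ with a single entry perturbed by a multiple of $s$.

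Concretely, if $\lambda$ is obtained from $\mu$ by a simple removal at row $j$ (the smallest index with $\mu_j$ finite, so $\mu_j\ge 1$), I would take $w$ to be $\delta_\mu$ with its entry $t^{\mu_j}$ replaced by $t^{\mu_j}+s\,t^{\mu_j-1}=t^{\mu_j-1}(s+t)$. If instead $\lambda$ is obtained from $\mu$ by an infinite removal at row $j$ (the largest index with $\mu_j=\infty$) with new finite value $\lambda_j$, I would take $w$ to be $\delta_\mu$ with its vanishing entry $t^{\mu_j}=0$ replaced by $s\,t^{\lambda_j}$. In both cases every entry lies in $\CC[\![s,t]\!]$ and setting $s=0$ returns $\delta_\mu$, so $w_0=\delta_\mu$ and the orbit of $w_0$ is $\orb\mu$. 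Over the ring $\CC(\!(s)\!)[\![t]\!]$, however, $s+t$ (resp.\ $s$) is a unit, so rescaling the one affected column by the appropriate inverse — a change of basis with coefficients in $\CC(\!(s)\!)[\![t]\!]$ — turns $w$ into exactly $\delta_\lambda$: the perturbed entry becomes $t^{\mu_j-1}=t^{\lambda_j}$ in the first case and $t^{\lambda_j}$ in the second, and the rest of the matrix is untouched. Thus $w_s$ and $\delta_\lambda$ are related by an invertible change of basis over $\CC(\!(s)\!)[\![t]\!]$, so in particular they have the same orders of contact along $\detvar0,\dots,\detvar{r-1}$; by Propositions~\ref{orbdec:cont} and~\ref{orbdec:cyl} these orders of contact determine the orbit, whence $w_s\in\orb\lambda$. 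Lemma~\ref{lem:wedge} then gives that $\orb s=\orb\lambda$ dominates $\orb 0=\orb\mu$, as desired.

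The one genuinely delicate point, and the step I would write out most carefully, is this last identification of the orbit of $w_s$: since $w_s$ is not a closed point of $\arc\M$, ``the orbit containing $w_s$'' must be read off from the cylinder description of Proposition~\ref{orbdec:cyl} (as in the remark following it, which covers non-closed points) rather than as a literal $\arc\G$-orbit of a point. What makes everything go through is that this description — orbits are cut out by contact conditions with the determinantal varieties — is insensitive to the base extension $\CC\subset\CC(\!(s)\!)$, which is precisely why the explicit one-column rescaling over $\CC(\!(s)\!)[\![t]\!]$ suffices. Everything else in the argument is a direct inspection of the matrix $\delta_\mu$.
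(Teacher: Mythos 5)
Your proposal is correct and essentially identical to the paper's proof: the paper uses the single unified wedge $\delta_\mu$ with the entry at the removed row $i$ replaced by $s\,t^{\lambda_i}+t^{\mu_i}$ (which collapses to your two cases), observes that $w_0=\delta_\mu$ and that $w_s$ differs from $\delta_\lambda$ by a unit in $\CC(\!(s)\!)[\![t]\!]$ at that row, concludes via the cylinder description of Proposition~\ref{orbdec:cyl} that $w_s\in\orb\lambda$, and then applies Lemma~\ref{lem:wedge}. Your extra care about $w_s$ being a non-closed point (invoking the remark after Proposition~\ref{orbdec:cyl}) is a welcome explicitness that the paper leaves implicit.
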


  \begin{proof}
    Let $i$ be the index such that $\lambda_i < \mu_i$, and consider the following
    wedge on \M:
    \[
      w =
      \renewcommand{\arraystretch}{0.4}
      \left(\,\,
      \begin{matrix}
               0         \\
        &      \ddots    \\
        &&     0         \\
        &&&    s t^{\lambda_i} + t^{\mu_i} \\
        &&&&   t^{\lambda_{i+1}} \\
        &&&&&  \ddots    \\
        &&&&&& t^{\lambda_r}
      \end{matrix}
      \,\,
      \right).
    \]
    The special arc of $w$ is $\delta_\mu$. The generic arc $w_s$ only differs
    from $\delta_\lambda$ by the presence of the unit $s+t^{\mu_i-\lambda_i}$
    on row $i$. Therefore $w_s$ and $\delta_\lambda$ have the same contact
    with respect to all the determinantal varieties, and the proof of
    Proposition \ref{orbdec:cyl} shows that $w_s$ is contained in \orb\lambda.
    Now we can apply Lemma \ref{lem:wedge} with $\orb0 = \orb\mu$ and $\orb s
    = \orb \lambda$, and the result follows.
  \end{proof}

  \begin{lemma}
    \label{lem:falls}
    Let $\lambda$ and $\mu$ be two pre-partitions in \eptr and assume that
    $\lambda$ is obtained from $\mu$ via a slip or a fall.  Then \orb\lambda
    dominates \orb\mu.
  \end{lemma}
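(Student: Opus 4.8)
The plan is to imitate the proof of Lemma~\ref{lem:removals}: exhibit an explicit wedge on \M whose special arc is $\delta_\mu$ and whose generic arc lands in \orb\lambda, and then invoke Lemma~\ref{lem:wedge}. So first I would reduce to the case where $\lambda$ is obtained from $\mu$ by a single slip or a single fall; by the definitions in Section~\ref{adjacencies}, a slip moves one box from row $j+1$ to row $j$ (so $\lambda_j = \mu_j+1$, $\lambda_{j+1} = \mu_{j+1}-1$, all other terms unchanged), while a fall moves one box from row $k$ down to row $j < k$ across a block of equal terms (so $\lambda_j = \mu_j+1$, $\lambda_k = \mu_k - 1$, all other terms unchanged). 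In both cases only two rows $j$ and $k$ (with $k = j+1$ for a slip) are affected, and $\mu_j = \mu_{j+1} = \dots = \mu_{k}$ in the fall case.

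The key construction is a $2\times 2$ elementary wedge supported on rows and columns $j$ and $k$. The natural guess is to leave $\delta_\mu$ untouched outside these two rows, and on the $2\times 2$ block with diagonal $\operatorname{diag}(t^{\mu_j}, t^{\mu_k})$ put something like
\[
  \begin{pmatrix} t^{\mu_j} & s\,t^{\mu_j} \\ 0 & t^{\mu_k} \end{pmatrix}
  \qquad\text{or rather}\qquad
  \begin{pmatrix} s\,t^{\mu_j - 1} & t^{\mu_j} \\ t^{\mu_k} & 0 \end{pmatrix},
\]
chosen so that at $s=0$ we recover (up to row/column operations over \psr\CC, i.e.\ up to the \arc\G-action) the matrix $\delta_\mu$, while for $s$ a unit the Smith normal form over $\CC(\!(s)\!)[\![t]\!]$ has diagonal exponents $\lambda_j = \mu_j+1$ and $\lambda_k = \mu_k-1$ in those slots. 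Concretely, starting from $\begin{pmatrix} t^{\mu_j} & 0 \\ 0 & t^{\mu_k}\end{pmatrix}$ one adds $s\,t^{\mu_j-\mu_k}$ times column $k$ to column $j$ (legal when $\mu_j \ge \mu_k$, which holds since $j<k$) to get $\begin{pmatrix} t^{\mu_j} & 0 \\ s\,t^{\mu_j} & t^{\mu_k}\end{pmatrix}$; wait — I want the gcd of the entries to drop by one power of $t$ for generic $s$, so instead I would cook up a wedge whose $(j,j)$-entry is $t^{\mu_j+1}$ plus an $s$-multiple of $t^{\mu_k}$, and whose other block entries conspire so that the two elementary divisors become $t^{\mu_j+1}$ and $t^{\mu_k - 1}$. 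Rather than committing to one formula, I would write down the wedge, then verify the two endpoints using the Smith-normal-form characterization of orbits from Proposition~\ref{orbdec:orbdec} — equivalently, using Proposition~\ref{orbdec:cyl}, check that the generic arc has the same contact orders $\cont{p}{\detvar\ell}$ as $\delta_\lambda$ for all $\ell$, which amounts to verifying $\sum_{i > \ell}(\text{exponents})$ matches for the block.

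The main obstacle is getting the wedge exactly right in the fall case, where rows $j$ and $k$ are not adjacent. One cannot simply do a single elementary column operation on nonadjacent rows without disturbing the rows $j+1, \dots, k-1$ in between; but those intermediate rows all have the \emph{same} exponent $\mu_j$, which is precisely the hypothesis that makes a fall legal, and this equality is what lets a $2\times 2$ operation on rows $\{j,k\}$ (ignoring the middle rows, which stay diagonal) be realized by the \arc\G-action without changing their elementary divisors. So the bookkeeping is: (i) the special arc is \arc\G-equivalent to $\delta_\mu$ — immediate, set $s=0$; (ii) for generic $s$ the elementary divisors of the $2\times2$ block over $\CC(\!(s)\!)[\![t]\!]$ are $t^{\mu_j+1}, t^{\mu_k-1}$, and together with the unchanged middle-row exponents $\mu_j = \dots = \mu_{k-1}$ and all other rows, the full list of exponents of the generic arc, sorted, is exactly $\lambda$. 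Once both are checked, Lemma~\ref{lem:wedge} with $\orb0 = \orb\mu$ and $\orb s = \orb\lambda$ gives that \orb\lambda dominates \orb\mu. I would present the slip as the special case $k=j+1$ and handle both uniformly.
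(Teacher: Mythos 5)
Your overall plan is exactly the paper's: build a wedge supported on a $2\times 2$ block in rows and columns $j,k$, check via the Smith-form/contact-order criterion that the special and generic arcs lie in $\orb\mu$ and $\orb\lambda$ respectively, and apply Lemma~\ref{lem:wedge}. Your remarks about nonadjacent rows in the fall case are also correct --- the intermediate rows stay diagonal and equal, so a $2\times 2$ operation on rows $j,k$ suffices. The genuine gap is that you never produce a block that works, and both candidates you write down fail. For $\left(\begin{smallmatrix} t^{\mu_j} & s\,t^{\mu_j} \\ 0 & t^{\mu_k} \end{smallmatrix}\right)$ the minimal entry order is $\mu_k$ and the determinant has order $\mu_j+\mu_k$ for \emph{every} $s$, so the elementary divisors never change and the generic arc remains in $\orb\mu$. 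For $\left(\begin{smallmatrix} s\,t^{\mu_j-1} & t^{\mu_j} \\ t^{\mu_k} & 0 \end{smallmatrix}\right)$ the minimal entry order for generic $s$ is $\min\{\mu_j-1,\mu_k\}$; this equals the required $\mu_k-1=\lambda_k$ only when $\mu_j=\mu_k$, which covers falls but not a genuine slip such as $\mu=(3,1)\to\lambda=(4)$. The missing idea is that the block must place an $s$-multiple of $t^{\mu_k-1}$ (not $t^{\mu_j-1}$) in row $k$, with the other entries tuned so that the determinant still has order exactly $\mu_j+\mu_k$ for generic $s$. The paper achieves this with
\[
\begin{pmatrix}
  s\,t^{\mu_j + 1} + t^{\mu_j} & t^{\mu_j} \\
  s\,t^{\mu_k - 1} & s\,t^{\mu_k - 1} + t^{\mu_k}
\end{pmatrix},
\]
whose determinant is $t^{\mu_j+\mu_k}(1+st+s^2)$ and whose smallest entry order for generic $s$ is $\mu_k-1$. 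Since producing such a block is the crux of the argument, stopping at ``I would cook up a wedge'' leaves a hole that a complete proof would need to fill.
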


  \begin{proof}
    Let $i<j$ be the indices such that $\lambda_i = \mu_i+1$,
    $\lambda_j = \mu_j-1$ and $\mu_k = \lambda_k$ for $k\ne i,j$.
    Consider the following wedge:
    \[
      w = \\ \left(
      \renewcommand{\arraystretch}{0.4}
      \begin{array}{cccccccccccccccccccccc}
                    t^{\lambda_1} \\
        &           \ddots  \\
        &&          t^{\lambda_{i-1}} \\
        &&&         \alpha & 0 & \cdots & 0 & \beta \\
        &&&         0 & t^{\lambda_{i+1}} & \cdots & 0 & 0 \\
        &&&         \vdots & \vdots & \ddots & \vdots & \vdots \\
        &&&         0 & 0 & \cdots & t^{\lambda_{j-1}} & 0 \\
        &&&         \gamma & 0 & \cdots & 0 & \delta \\
        &&&&&&&&    t^{\lambda_{j+1}} \\
        &&&&&&&&&   \ddots \\
        &&&&&&&&&&  t^{\lambda_r} \\
      \end{array}
      \right)
    \]
    where
    \[
     \begin{pmatrix}
       \alpha & \beta \\
       \gamma & \delta 
     \end{pmatrix}
     =
     \begin{pmatrix}
       s t^{\lambda_i} + t^{\lambda_i-1} & t^{\lambda_i-1} \\
       s t^{\lambda_j} & s t^{\lambda_j} + t^{\lambda_j+1}
     \end{pmatrix}.
    \]
    Notice that
    \[
      \operatorname{ord}_t \begin{pmatrix} \alpha & \beta \\ \gamma & \delta \end{pmatrix}
      = \lambda_j, 
      \quad \qquad
      \operatorname{ord}_t \begin{pmatrix} \alpha & \beta \\ \gamma & \delta \end{pmatrix}
        \Bigg|_{s=0}
      = \lambda_j+1, 
      \quad \qquad
      \det \begin{pmatrix} \alpha & \beta \\ \gamma & \delta \end{pmatrix}
        = t^{\lambda_i + \lambda_j} \left( 1 + s t + s^2 \right).
    \]
    From these equations, we see that $w_0$ and $\delta_\mu$ have the same
    order of contact with respect to all determinantal varieties, and the
    proof of Proposition \ref{orbdec:cyl} tells us that $w_0$ is contained in
    \orb\mu. Analogously, the equations above show that $w_s$ is contained in
    \orb\lambda. Lemma \ref{lem:wedge} gives the result.
  \end{proof}

  \begin{theorem}[Orbit poset $=$ Pre-partition poset]\label{orbposet:main}
    The map that sends a pre-partition $\lambda \in \ept r$ of length at most
    $r$ to the associated orbit \orb \lambda in \arc \M is an order-reversing
    isomorphism between \eptr and the orbit poset:
    \[
      \orb\lambda \ge \orb{\mu}
      \quad \Longleftrightarrow \quad
      \lambda \triangleleft \mu
      \quad \Longleftrightarrow \quad
      \lambda_{r-i} + \dots + \lambda_r
      \le
      \mu_{r-i} + \dots + \mu_r
      \quad
      \forall i.
    \]
  \end{theorem}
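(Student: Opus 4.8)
The statement packages three equivalent conditions, and the plan is to organize the argument around the middle one, $\lambda \triangleleft \mu$. The rightmost equivalence requires no work: since $\lambda$ and $\mu$ lie in \eptr they have length at most $r$, so in the definition of co-domination every tail sum $\lambda_i + \lambda_{i+1} + \dots$ stops at index $r$, and reindexing $i \mapsto r-i$ rewrites the defining inequalities as $\lambda_{r-i} + \dots + \lambda_r \le \mu_{r-i} + \dots + \mu_r$ for all $i$. Everything therefore reduces to the leftmost equivalence $\orb\lambda \ge \orb\mu \iff \lambda \triangleleft \mu$, which I would prove as two separate implications and then combine with the known bijection between \eptr and the orbit set.

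The forward implication, $\orb\lambda \ge \orb\mu \Rightarrow \lambda \triangleleft \mu$, is already available: it is exactly Proposition \ref{orbposet:domimpsub}, which follows from the fact that the contact loci $\contp{\detvark}$ are Zariski closed and $\arc G$-invariant. So there is nothing further to do in that direction.

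For the reverse implication, assume $\lambda \triangleleft \mu$. First I would invoke Theorem \ref{th-adjacencies} to produce a finite chain
\[
  \lambda = \nu^m \triangleleft \cdots \triangleleft \nu^1 \triangleleft \nu^0 = \mu
\]
in \eptr in which each $\nu^i$ is obtained from $\nu^{i-1}$ by a simple removal, an infinite removal, a slip, or a fall. Then Lemma \ref{lem:removals} (handling the two kinds of removal) and Lemma \ref{lem:falls} (handling slips and falls) say precisely that in each such elementary step the orbit of the smaller pre-partition dominates the orbit of the larger one, i.e.\ $\orb{\nu^i} \ge \orb{\nu^{i-1}}$ for every $i$. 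Since domination is defined as containment of Zariski closures it is a transitive relation, so chaining the inequalities gives $\orb\lambda = \orb{\nu^m} \ge \orb{\nu^0} = \orb\mu$, as required. Combining the two implications with the bijection $\lambda \mapsto \orb\lambda$ of Proposition \ref{orbdec:orbdec} then shows this bijection is an order-reversing isomorphism of posets.

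I do not expect a genuine obstacle at this stage: the substantive content — the explicit wedges behind Lemmas \ref{lem:removals} and \ref{lem:falls}, and the combinatorial reduction of Theorem \ref{th-adjacencies} — has already been carried out, and the theorem is essentially the assembly of these pieces. The one point worth a second glance is that Theorem \ref{th-adjacencies} delivers a genuinely \emph{finite} chain of elementary moves even when $\lambda$ and $\mu$ carry infinite terms; this finiteness is exactly what legitimizes the transitive chaining of the dominations, so I would make sure to quote it in that form.
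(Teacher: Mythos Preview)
Your proposal is correct and matches the paper's own proof essentially verbatim: the paper likewise derives the theorem by combining Proposition~\ref{orbposet:domimpsub} (forward implication), Theorem~\ref{th-adjacencies} (finite chain of elementary moves), and Lemmas~\ref{lem:removals} and~\ref{lem:falls} (domination along each elementary move). Your added remarks on the rightmost equivalence and on the finiteness of the chain are sound and simply make explicit what the paper leaves implicit.
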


  \begin{proof}
    The theorem follows from Proposition \ref{orbposet:domimpsub}, Lemma
    \ref{lem:removals}, Lemma \ref{lem:falls}, and Theorem
    \ref{th-adjacencies}.
  \end{proof}

  In the remainder of the section we use Theorem \ref{orbposet:main} to
  compute the number of irreducible components of the jet schemes of generic
  determinantal varieties.

  \begin{notation}
    As it is customary in the theory of partitions, we write $\lambda =
    (d_1^{a_1} \dots d_j^{a_j})$ to denote the pre-partition that has $a_i$ copies
    of $d_i$. For example $(\infty,\infty,5,3,3,3,2,1,1) = (\infty^2\, 5^1\, 3^3\,
    2^1\,1^2)$.
  \end{notation}

  \begin{proposition}
    \label{jetcomp:cont}\def\c{\orb{}}%
    Recall that $\detvark \subset \M$ denotes the determinantal variety of
    matrices of size $r \times s$ and rank at most $k$.  Assume that
    $0<k<r-1$, and let \c be an irreducible component of $\contp\detvark
    \subset \arc \M$.  Then \c contains a unique dense \arc G-orbit
    \orb\lambda. Moreover, $\lambda$ is a partition (contains no infinite
    terms) and $\lambda = (d^{a+r-k}\, e^1)$ where
    \[
      p = (a+1)\, d + e,
      \qquad \qquad
      0 \le e < d,
    \]
    and either $e=0$ and $0 \le a\le k$ or $e>0$ and $0 \le a < k$.
    Conversely, for any partition as above, its associated orbit is dense in
    an irreducible component of \contp\detvark.
  \end{proposition}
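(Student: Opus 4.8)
The plan is to convert the statement into a purely combinatorial question about the $\triangleleft$-minimal elements of a sub-poset of $\eptr$, and then answer that question explicitly.

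\textbf{Topological reduction.} First I would note that $\contp\detvark$ is closed in $\arc\M$ and $\arc G$-invariant (it is $\psi_{p-1}^{-1}(\jet{p-1}\detvark)$), and that by Proposition~\ref{orbdec:cont} it is the union of the orbits $\orb\lambda$ with $\lambda$ in
\[
  S_p = \{\, \lambda \in \ept r \,:\, \lambda_{r-k} + \dots + \lambda_r \ge p \,\}.
\]
Since $\arc G = \parc\GLr \times \parc\GLs$ is connected, every irreducible component of $\contp\detvark$ is $\arc G$-invariant, hence a union of orbits; and since $\arc\M$ is a scheme such a component has a generic point, so by Proposition~\ref{orbdec:orbdec} it equals $\overline{\orb\lambda}$ for the orbit containing that generic point. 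By Theorem~\ref{orbposet:main} one has $\overline{\orb\mu} \subseteq \overline{\orb\lambda}$ iff $\lambda \triangleleft \mu$, while $\overline{\orb\mu} \subseteq \contp\detvark$ for every $\mu \in S_p$; hence $\overline{\orb\lambda}$ is an irreducible component of $\contp\detvark$ exactly when $\lambda$ is $\triangleleft$-minimal in $S_p$, in which case $\orb\lambda$ is its dense orbit, and this dense orbit is unique by antisymmetry of $\triangleleft$. So the proposition reduces to the claim that the $\triangleleft$-minimal elements of $S_p$ are precisely the pre-partitions of the stated form (we may assume $p\ge 1$).

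\textbf{Minimal elements have the stated shape.} Let $\lambda\in S_p$ be $\triangleleft$-minimal; I would argue in steps. First, $\lambda$ must be a partition: if it had an infinite term, replacing the largest infinite term by a sufficiently large finite integer gives $\lambda'\ne\lambda$ with $\lambda'\triangleleft\lambda$ and $\lambda'\in S_p$ — here the hypothesis $k<r-1$, i.e.\ $r-k\ge 2$, is what provides the needed room — contradicting minimality. Next, $\lambda_{r-k}+\dots+\lambda_r = p$ exactly: otherwise, reducing by $1$ the last nonzero term of $\lambda$ yields a strictly smaller element still in $S_p$. Next, $\lambda_1=\dots=\lambda_{r-k}=:d$: otherwise, taking the least $i<r-k$ with $\lambda_i>\lambda_{i+1}$ and reducing $\lambda_i$ by $1$ leaves $\lambda_{r-k}+\dots+\lambda_r$ unchanged and again produces a strictly smaller element of $S_p$. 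Now $\lambda$ is determined by its tail $(\lambda_{r-k+1},\dots,\lambda_r)$, a partition of $p-d$ into at most $k$ parts, each $\le d$; and replacing the tail by any strictly $\triangleleft$-smaller partition of $p-d$ with parts $\le d$, keeping the head $d^{r-k}$, again lands in $S_p$ and strictly co-dominates $\lambda$. Hence the tail must be the $\triangleleft$-least partition of $p-d$ with parts $\le d$, namely the greedy one $(d^a\, e^1)$ given by $p-d=ad+e$, $0\le e<d$ (which automatically has at most $k$ parts, since the original tail did and so $p-d\le kd$). Therefore $\lambda=(d^{a+r-k}\, e^1)$ with $(a+1)d+e=p$, and the requirement that the length of $\lambda$ be $\le r$ translates into: $e=0$ and $0\le a\le k$, or $e>0$ and $0\le a<k$.

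\textbf{Each stated shape is $\triangleleft$-minimal.} Conversely, let $\lambda=(d^{a+r-k}\, e^1)$ be of the stated form. Its tail starting at position $r-k$ sums to $(a+1)d+e=p$, so $\lambda\in S_p$. If $\lambda'\triangleleft\lambda$ lies in $S_p$, I would exploit the tail-sum inequalities defining $\triangleleft$ at positions $r-k,\,r-k-1,\,\dots,\,1$ (using $r-k\ge 2$) to force $\lambda'_1=\dots=\lambda'_{r-k}=d$, and then the remaining inequalities to force the tail of $\lambda'$ to be $(d^a\, e^1)$ as well, so $\lambda'=\lambda$; hence $\lambda$ is $\triangleleft$-minimal in $S_p$.

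The topological reduction is routine once Theorem~\ref{orbposet:main} is in hand; the crux, and the step I expect to require the most care, is the combinatorial analysis in the second part: pinning down the $\triangleleft$-minimum among partitions of $p-d$ with parts bounded by $d$ as the greedy partition, and cleanly eliminating the pre-partitions that carry infinite terms. Once those are settled, matching the shape $(d^{a+r-k}\, e^1)$ and the constraints on $a$ and $e$ against the length bound is immediate.
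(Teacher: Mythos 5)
Your proposal is correct and follows the same strategy as the paper's proof: reduce via Theorem~\ref{orbposet:main} and Proposition~\ref{orbdec:cont} to finding the $\triangleleft$-minimal elements of $S_p = \{\lambda \in \ept r : \lambda_{r-k}+\dots+\lambda_r \ge p\}$, then show that such elements must be partitions with constant head $d^{r-k}$, tail-sum exactly $p$, and greedy tail. The main difference is cosmetic: where the paper moves a single box using the adjacency structure from Section~\ref{adjacencies}, you replace the entire tail by the $\triangleleft$-least partition of $p-d$ with parts bounded by $d$, and where the paper handles the converse implicitly by observing that candidates with different $d=\lambda_{r-k}$ are mutually incomparable, you argue it directly via the tail-sum inequalities — this works (using $\lambda'_{r-k} \le \lambda'_{r-k-1} \le d$ together with $\lambda'_{r-k} \ge d$), though you only sketch it.

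One small misattribution: the hypothesis $k<r-1$ (i.e.\ $r-k\ge 2$) is \emph{not} what you need to eliminate infinite terms — that replacement argument goes through for any $0\le k<r$, and indeed the paper's Proposition~\ref{jetcomp:conteasy} shows the minimizers are finite partitions when $k=0$ or $k=r-1$ as well. The hypothesis $k<r-1$ is genuinely used only to make the constant-head condition $\lambda_1=\dots=\lambda_{r-k}$ nonvacuous and, in your converse step, to have the inequality at position $r-k-1\ge 1$ available to pin down $\lambda'_{r-k}=d$; equivalently, in the paper's language, to make partitions with different $d$ incomparable.
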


  \begin{example}
    When $r=8$, $k=6$, and $p=5$, the partitions given by the proposition are
    \[
      (5,5), \quad (4,4,1), \quad (3,3,2), \quad (2,2,2,1), \quad (1,1,1,1,1,1).
    \]
    When $r=5$, $k=3$, and $p=5$, we only get
    \[
      (5,5), \quad (4,4,1), \quad (3,3,2), \quad (2,2,2,1).
    \]
  \end{example}

  \begin{proof}
    By Theorem \ref{orbposet:main} and Proposition \ref{orbdec:cont},
    computing the irreducible components of \contp\detvark is equivalent to
    computing the minimal elements (with respect to the order of
    co-domination) among all pre-partitions $\lambda \in \ept r$ such that
    \[
      \lambda_r + \lambda_{r-1} + \dots + \lambda_{r-k} \ge p.
    \]
    Let $\Sigma$ be the set of such partitions.  To find minimal elements in
    $\Sigma$ it will be useful to keep in mind the structure of the
    adjacencies in \eptr discussed in Section \ref{adjacencies}

    First notice that all minimal elements in $\Sigma$ must be partitions.
    Indeed, given an element $\lambda \in \Sigma$, truncating all infinite
    terms of $\lambda$ to a high enough number produces another element of
    $\Sigma$.  Moreover, if $\lambda \in \Sigma$ is minimal, we must have
    $\lambda_1 = \lambda_2 = \dots = \lambda_{r-k}$. If this were not the
    case, we could consider the partition $\lambda'$ such that $\lambda_1' =
    \dots = \lambda'_{r-k} = \lambda_{r-k}$, and $\lambda_i' = \lambda_i$ for
    $i>r-k$.  Then $\lambda'$ would also be in $\Sigma$, but $\lambda'
    \triangleleft \lambda$, contradicting the fact that $\lambda$ is minimal.
    It is also clear that minimal elements of $\Sigma$ must verify
    $\lambda_{r-k} + \dots + \lambda_r = p$. In fact, if a partition in
    $\Sigma$ does not verify this, we can decrease the last terms of the
    partition an still remain in $\Sigma$.

    So far we know that the minimal elements in $\Sigma$ are partitions that
    verify $\lambda_1 = \dots = \lambda_{r-k}$ and $\lambda_{r-k} + \dots +
    \lambda_r = p$. Note that we assume $0<k<r-1$, so for any two partitions
    $\lambda$, $\lambda'$ with the previous properties, if $\lambda_{r-k} \ne
    \lambda'_{r-k}$, then $\lambda$ and $\lambda'$ are not comparable.
    
    Pick a minimal element $\lambda \in \Sigma$, and write $d =
    \lambda_{r-k}$.  Let $\ell$ be the length of $\lambda$. The proposition
    will follow if we show that the sequence $(\lambda_{r-k}, \lambda_{r-k+1},
    \dots ,\lambda_\ell)$ is of the form $(d, \dots, d, e)$ for some $0\le
    e<d$. But this is clear from the analysis of the adjacencies in \eptr
    given in Section \ref{adjacencies}.  Consider the Young diagram $\Gamma$
    associated to $\lambda$. The longest row of $\Gamma$ has length $d$. If
    there are two rows, say $i<j$, with length less than $d$, then we must
    have $r-k < i$ and we can move one box from row $j$ to row $i$ (via a
    sequence of falls and slips) and obtain a partition still in $\Sigma$ but
    co-dominated by $\lambda$. This contradicts the fact that $\lambda$ is
    minimal, and we see that $\lambda$ must have the form given in the
    proposition.
  \end{proof}

  \begin{proposition}
    \label{jetcomp:conteasy}
    Assume that $k=0$ or $k=r-1$. Then $\contp\detvark \subset \arc \M$ is
    irreducible and contains a unique dense orbit \orb\lambda, where $\lambda
    = (p^{r-k})$.
  \end{proposition}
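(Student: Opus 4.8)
The plan is to reduce to the combinatorial problem solved in the previous propositions. By Theorem~\ref{orbposet:main} together with Proposition~\ref{orbdec:cont}, identifying the irreducible components of $\contp\detvark$ amounts to finding the minimal elements, with respect to the order of co-domination, among the pre-partitions $\lambda\in\ept r$ satisfying $\lambda_{r-k}+\dots+\lambda_r\ge p$. So I would repeat the opening reductions from the proof of Proposition~\ref{jetcomp:cont}: any minimal such $\lambda$ must be an honest partition (truncate the infinite terms), must satisfy $\lambda_1=\dots=\lambda_{r-k}$ (otherwise flatten the leading terms down to $\lambda_{r-k}$, producing a strictly co-dominated element still in $\Sigma$), and must satisfy $\lambda_{r-k}+\dots+\lambda_r=p$ (otherwise shrink the tail). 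These three facts hold regardless of the value of $k$, so they are available here verbatim.

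Next I would handle the two cases separately. If $k=r-1$, then the condition $\lambda_1=\dots=\lambda_{r-k}$ is vacuous (it involves only $\lambda_1$), and the constraint is simply $\lambda_1+\dots+\lambda_r\ge p$ on partitions of length at most $r$; the co-domination order on such partitions has a unique minimum, namely the one that spreads $p$ as evenly as possible — but in fact any partition of total size $p$ co-dominates $(p^{r})$? No: one checks directly that $(\lfloor p/r\rfloor,\dots)$ is \emph{not} what we want. Rather, observe that among all length-$\le r$ partitions with $|\lambda|\ge p$, the partition $(p^{r})$ — wait, that has size $rp$. The correct observation is that $(p)$, i.e.\ $\lambda_1=p$ and $\lambda_2=\dots=0$, is co-dominated by \emph{every} element of $\Sigma$: for a partition $\lambda$ with $\sum\lambda_i\ge p$ we have $\lambda_i+\lambda_{i+1}+\dots\ge 0$ trivially for $i\ge 2$ and $\lambda_1+\dots\ge p$, which is exactly $\lambda\triangleleft(p)$ — here $(p)=(p^{r-k})$ since $r-k=1$. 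Hence $\contp\detvark$ has a unique dense orbit, the one attached to $(p^{r-k})$.

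For $k=0$ the situation is dual: the constraint is just $\lambda_r\ge p$, and since $\lambda$ is non-increasing this forces $\lambda_1,\dots,\lambda_r$ all $\ge p$, i.e.\ $\lambda\supseteq(p^{r})=(p^{r-k})$. Since containment implies co-domination (from the relations in Section~\ref{sec:partitions}, $\lambda\supseteq\mu\Rightarrow$ the tail sums of $\mu$ are $\le$ those of $\lambda$, hence $\mu\triangleleft\lambda$), every element of $\Sigma$ co-dominates $(p^{r-k})$, which therefore is the unique minimum; its orbit is dense in the (consequently irreducible) locus $\contp\detvark$. In both cases the dense orbit corresponds to $\lambda=(p^{r-k})$, and the density statement follows as in Proposition~\ref{jetcomp:cont} from Theorem~\ref{orbposet:main} and Proposition~\ref{orbdec:cyl}.

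The only subtle point — and the one I would be most careful about — is the degenerate behaviour of the co-domination order precisely when $k\in\{0,r-1\}$: Proposition~\ref{jetcomp:cont} produced \emph{several} incomparable minimal elements because when $0<k<r-1$ partitions with different values of $\lambda_{r-k}$ are incomparable, whereas here that obstruction disappears (for $k=r-1$ because "$\lambda_1=\dots=\lambda_{r-k}$" is empty, for $k=0$ because the single constraint is a containment condition). So the main obstacle is not any hard computation but rather being precise that the argument of Proposition~\ref{jetcomp:cont} genuinely collapses to a unique minimum in these boundary cases, which is why the hypothesis $0<k<r-1$ was needed there. I would phrase the proof so as to make that collapse transparent, perhaps by noting that in each boundary case the set $\Sigma$ has a $\triangleleft$-smallest element $(p^{r-k})$ that is comparable to everything.
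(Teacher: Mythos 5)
Your proposal is correct and matches the paper's approach: the paper's (very short) proof likewise reads off from Proposition~\ref{orbdec:cont} that the constraint is $\lambda_r\ge p$ (when $k=0$) or $\lambda_1+\dots+\lambda_r\ge p$ (when $k=r-1$), and then identifies $(p^r)$ resp.\ $(p)$ as the unique co-domination-minimal element of $\Sigma$. One notational slip worth fixing: in the $k=r-1$ case you correctly derive the inequalities $\sum_{j\ge i}\lambda_j\ge\sum_{j\ge i}(p)_j$ for all $i$, but then summarize them as ``$\lambda\triangleleft(p)$''; with the paper's convention ($\lambda\triangleleft\mu$ means $\mu$ co-dominates $\lambda$, i.e.\ the tail sums of $\lambda$ are $\le$ those of $\mu$) those inequalities say $(p)\triangleleft\lambda$, which is what your surrounding prose intends and what is actually needed for $\orb{(p)}$ to dominate $\orb\lambda$.
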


  \begin{proof}
    Form Proposition \ref{orbdec:cont}, the orbits \orb\lambda contained in
    \contp{\detvar0} are the ones that verify $\lambda_r \ge p$. It is clear
    that the minimal partition of this type is $(p^{r})$. Analogously,
    $\contp{\detvar{r-1}}$ contains orbits whose associated partitions verify
    $\lambda_1 + \dots + \lambda_r \ge p$, and the minimal one among these is
    $(p^1)$.
  \end{proof}

  \begin{theorem}
    \label{jetcomp:num}

    If $k=0$ or $k=r-1$, the contact locus $\contp\detvark \subset \arc \M$ is
    irreducible. Otherwise, the number of irreducible components of
    $\contp\detvark \subset \arc \M$ is
    \[
      p + 1 - \left\lceil \frac{p}{k+1} \right\rceil.
    \]
  \end{theorem}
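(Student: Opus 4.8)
The plan is to reduce directly to Proposition \ref{jetcomp:cont} and Proposition \ref{jetcomp:conteasy}, so the work is purely combinatorial: count the partitions $\lambda = (d^{a+r-k}\, e^1)$ produced by Proposition \ref{jetcomp:cont}. First I would dispose of the cases $k=0$ and $k=r-1$ by quoting Proposition \ref{jetcomp:conteasy}, which already says $\contp\detvark$ is irreducible in those cases. So assume $0 < k < r-1$. By Proposition \ref{jetcomp:cont}, the irreducible components of $\contp\detvark$ are in bijection with pairs $(d, e)$, where $d$ is a positive integer, $e$ is an integer with $0 \le e < d$, and one writes $p = (a+1)d + e$ with $a$ the corresponding nonnegative integer, subject to the constraint that either $e = 0$ and $0 \le a \le k$, or $e > 0$ and $0 \le a < k$.

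Next I would recast this as a count over $d$. For a fixed $d$, the pair $(a,e)$ is uniquely determined by $p$: namely $a+1 = \lceil p/d \rceil$ is forced once we also insist $0 \le e < d$ — more precisely $e$ is the remainder of $p$ modulo $d$ (taken in $\{0,\dots,d-1\}$) and then $a = (p-e)/d - 1$. So each admissible $d$ contributes exactly one component, and the count is the number of positive integers $d$ satisfying the admissibility condition. Writing $p = qd + e$ with $0 \le e < d$ and $q = a+1$, the condition becomes: if $e = 0$ then $1 \le q \le k+1$, and if $e > 0$ then $1 \le q \le k$. I would then observe that $q \ge 1$ simply means $d \le p$, and analyze the upper bound. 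When $d \mid p$ we need $p/d \le k+1$, i.e. $d \ge p/(k+1)$, i.e. $d \ge \lceil p/(k+1) \rceil$; when $d \nmid p$ we need $\lceil p/d \rceil \le k$, which (since $d \nmid p$) is equivalent to $p/d < k$... — here I need to be careful: $\lceil p/d\rceil \le k \iff p/d \le k \iff d \ge p/k$, but combined with $d \nmid p$ this is $d \ge \lceil p/k\rceil$ unless $k \mid p$ and $d = p/k$, which is excluded anyway. The cleanest route is to count the complementary set: the positive integers $d \le p$ that are \emph{not} admissible are exactly those with $\lceil p/d \rceil = k+1$ and $d \nmid p$.

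So the final step is: total admissible $d$ equals (number of $d$ with $1 \le d \le p$) minus (number of inadmissible $d$). The inadmissible ones are the $d$ with $\lceil p/d\rceil \ge k+1$, minus the one value $d = p/(k+1)$ in the case $(k+1) \mid p$ (which is admissible because then $e=0$, $q=k+1$). The set $\{d : 1 \le d \le p,\ \lceil p/d \rceil \ge k+1\}$ is exactly $\{d : 1 \le d \le \lceil p/(k+1)\rceil - 1\}$ when $(k+1)\nmid p$, and $\{d : 1 \le d \le p/(k+1)\}$ when $(k+1)\mid p$; in either case, after adding back the admissible boundary value in the divisible case, the number of inadmissible $d$ is $\lceil p/(k+1)\rceil - 1$. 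Hence the number of components is $p - (\lceil p/(k+1)\rceil - 1) = p + 1 - \lceil p/(k+1)\rceil$, as claimed.

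I expect the main obstacle to be the bookkeeping at the boundary $\lceil p/d \rceil = k+1$: one must correctly handle whether $d = p/(k+1)$ occurs as an integer and whether it falls on the admissible or inadmissible side, and check that the two ceiling expressions (with and without divisibility) collapse to the same formula $\lceil p/(k+1)\rceil - 1$. A clean way to sidestep case analysis is to note $d$ is admissible $\iff$ $\lceil p/d\rceil \le k$ or ($d\mid p$ and $p/d = k+1$), and that this holds $\iff$ $d \ge \lceil p/(k+1)\rceil$ — this single inequality, verified by a short case check on divisibility by $k+1$, immediately gives the count $p - \lceil p/(k+1)\rceil + 1$.
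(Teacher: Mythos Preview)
Your approach matches the paper's: both reduce to Propositions \ref{jetcomp:conteasy} and \ref{jetcomp:cont}, parametrize the components by $d$, and show the admissible $d$ are precisely those with $\lceil p/(k+1)\rceil \le d \le p$. However, there is a floor/ceiling slip in your execution. From $p = (a+1)d + e$ with $0 \le e < d$ one has $a+1 = \lfloor p/d \rfloor$, not $\lceil p/d \rceil$. Consequently your intermediate characterization ``$d$ is admissible $\iff$ $\lceil p/d\rceil \le k$ or ($d\mid p$ and $p/d = k+1$)'' is false: for instance with $p=5$, $k=2$, $d=2$ one gets $a=1$, $e=1$, so $d$ is admissible (since $e>0$ and $a<k$), yet $\lceil 5/2\rceil = 3 > k$ and $2 \nmid 5$. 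The correct unified condition is $\lceil p/d\rceil \le k+1$ (equivalently: $\lfloor p/d\rfloor \le k$ when $d\nmid p$, and $p/d \le k+1$ when $d\mid p$), and \emph{this} is what is equivalent to $d \ge \lceil p/(k+1)\rceil$. So your final count is right, but the bridge to it needs this correction.

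The paper sidesteps the bookkeeping by phrasing the constraint as a length bound on the partition $\lambda^d = (d^{a+r-k}\,e^1)$: its length is $p/d + r - k - 1$ if $d\mid p$ and $\lfloor p/d\rfloor + r - k$ otherwise, and in either case the condition ``length $\le r$'' is immediately seen to be equivalent to $d \ge \lceil p/(k+1)\rceil$, giving the count $p+1-\lceil p/(k+1)\rceil$ without any separate admissible/inadmissible tally.
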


  \begin{proof}
    The first assertion follows directly from Proposition
    \ref{jetcomp:conteasy}. For the second one, we need to count the number of
    partitions that appear in Proposition \ref{jetcomp:cont}. Recall that
    these were partitions of the form $\lambda^d = (d^{a+r-k}, e^1)$ of length
    at most $r$ such that $p = (a+1)d+e$ and $0\le e < d$. Since $d$ ranges
    from $0$ to $p$, we have at most $p+1$ such partitions. But as we decrease
    $d$, the length of $\lambda^d$ increases, possibly surpassing the limit
    $r$.  Therefore the number of allowed partitions is $p+1 - d_0$, where
    $d_0$ is the smallest integer such that $\lambda^{d_0}$ has length no
    greater than $r$.

    If $d$ divides $p$, the length of $\lambda^d$ is $(\tfrac{p}{d}-1+r-k)$.
    Otherwise it is $(\left\lfloor\tfrac{p}{d}\right\rfloor+r-k)$.  In either
    case, the length is no greater that $r$ if and only if $d\ge\left\lceil
    \tfrac{p}{k+1}\right\rceil$. Hence $d_0 =
    \left\lceil\tfrac{p}{k+1}\right\rceil$, and the theorem follows.
  \end{proof}

  \begin{corollary}\label{irrcomp:main}
    It $k=0$ or $k=r-1$, the jet scheme \jetn\detvark is irreducible.
    Otherwise, the number of irreducible components of \jetn\detvark is
    \[
      n + 2 - \left\lceil \frac{n+1}{k+1} \right\rceil.
    \]
  \end{corollary}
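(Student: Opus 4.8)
This corollary is essentially a restatement of Theorem~\ref{jetcomp:num} after a change of index, so the plan is to identify $\jetn\detvark$ with a contact locus in $\arc\M$ up to a trivial fibration. Write $\psi_n\colon\arc\M\to\jetn\M$ for the truncation map. The steps are: (1) show $\psi_n^{-1}(\jetn\detvark)=\cont{n+1}{\detvark}$; (2) show that $\psi_n$ puts the irreducible components of $\cont{n+1}{\detvark}$ in bijection with those of $\jetn\detvark$; (3) quote Theorem~\ref{jetcomp:num} with $p=n+1$.

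For step (1) I would unwind the definitions exactly as in the proof of Proposition~\ref{orbdec:cont}. An arc $\alpha\in\arc\M$ is a matrix with entries in $\psr K$; its pull-back ideal $\alpha^*(\II_{\detvark})$ is generated by the $(k+1)\times(k+1)$ minors of that matrix, and $\psi_n(\alpha)$ lands in $\jetn\detvark$ precisely when every such minor is divisible by $t^{n+1}$ in $\psr K$. This is exactly the condition $\ord_\alpha(\II_{\detvark})\ge n+1$, i.e.\ $\alpha\in\cont{n+1}{\detvark}$. Hence $\cont{n+1}{\detvark}=\psi_n^{-1}(\jetn\detvark)$ is a cylinder.

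For step (2) I would use that $\M=\AA^{rs}$ is affine space, so the truncation maps split globally: in jet coordinates one has $\arc\M\cong\jetn\M\times\AA^{\infty}$ over $\jetn\M$, where $\AA^{\infty}$ is an integral $\CC$-scheme. Under this isomorphism $\cont{n+1}{\detvark}=\psi_n^{-1}(\jetn\detvark)$ corresponds to $\jetn\detvark\times\AA^{\infty}$, and since taking the product with a fixed integral $\CC$-scheme carries the irreducible components $W$ of $\jetn\detvark$ bijectively onto the irreducible components $W\times\AA^{\infty}$ of the product, the schemes $\jetn\detvark$ and $\cont{n+1}{\detvark}$ have the same (finite) number of irreducible components.

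Finally, applying Theorem~\ref{jetcomp:num} with $p=n+1$: if $k=0$ or $k=r-1$, then $\cont{n+1}{\detvark}$ is irreducible, hence so is $\jetn\detvark$; otherwise the number of irreducible components of $\jetn\detvark$ equals that of $\cont{n+1}{\detvark}$, namely
\[
  (n+1)+1-\left\lceil\frac{n+1}{k+1}\right\rceil
  = n+2-\left\lceil\frac{n+1}{k+1}\right\rceil .
\]
I do not expect a genuine obstacle here, since all the hard work is already in Theorem~\ref{jetcomp:num}. The only point deserving a sentence of care is the claim in step (2) that the infinite-dimensional fibration $\psi_n$ neither creates nor merges components; this follows from the global triviality of $\psi_n$ over affine space together with the integrality of the fibre $\AA^{\infty}$, or equivalently from the standard fact that the irreducible components of a cylinder $\psi_n^{-1}(C)$ are the cylinders $\psi_n^{-1}(C_i)$ over the irreducible components $C_i$ of $C$.
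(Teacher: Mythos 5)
Your proposal is correct and takes essentially the same route as the paper's own proof: identify $\cont{n+1}{\detvark}=\psi_n^{-1}(\jetn\detvark)$, argue that truncation preserves the number of irreducible components, and invoke Theorem~\ref{jetcomp:num} with $p=n+1$. If anything you spell out the middle step more carefully than the paper does (the paper simply says ``since $\M$ is smooth, the truncation map is surjective, so the number of components agrees''), whereas your appeal to the global product decomposition of $\arc\M$ over $\jetn\M$ for affine space makes explicit why the bijection of components actually holds.
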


  \begin{proof}
    The contact locus $\cont{n+1}\detvark$ is the inverse image of the jet
    scheme $\jetn\detvark$ under the truncation map $\arc\M \to \jetn\M$.
    Since \M is smooth, this truncation map is surjective, so \jetn\detvark
    has the same number of components as $\cont{n+1}\detvark$. Now the result
    follows directly from Theorem \ref{jetcomp:num}.
  \end{proof}

\section{Discrepancies and log canonical thresholds}

  \label{discreps}

  In this section we compute discrepancies for all invariant divisorial
  valuations over \M and over \detvark, and use it to give formulas for log
  canonical thresholds involving determinantal varieties. We start with a
  proposition that determines all possible invariant maximal divisorial sets
  in terms of orbits in the arc space.

  \begin{proposition}[Divisorial sets = Orbit closures, \M]
    \label{discreps:diveqorb}%
    \def\c{\orb{}}%

    Let $\nu$ be a $G$-invariant divisorial valuation over \M, and let \c be
    the associated maximal divisorial set in \arc \M. Then there exists a
    unique partition $\lambda \in \pt r$ of length at most $r$ whose
    associated orbit \orb\lambda is dense in \c. Conversely, the closure of
    \orb\lambda, where $\lambda$ is a partition, is a maximal divisorial set
    associated to an invariant valuation.
    
  \end{proposition}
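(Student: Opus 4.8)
The plan is to show that $G$-invariant divisorial valuations over $\M$ correspond exactly to the orbit closures $\overline{\orb\lambda}$ for partitions $\lambda \in \pt r$, by matching up the combinatorial description of orbits (Theorem~\ref{orbposet:main}) with the characterization of maximal divisorial sets recalled in Section~\ref{sec-clv}. The key structural fact I would exploit is that, by the Remark following Proposition~\ref{orbdec:cyl}, every orbit in $\arc\M$ is of the form $\overline{\orb\lambda}$ for a partition $\lambda$ when we restrict to fat sets — more precisely, a fat irreducible $\arc G$-invariant closed subset of $\arc\M$ must be an orbit closure, because its generic point is a schematic point of $\arc\M$, hence contained in some orbit generated by a closed point, and invariance forces equality of closures.

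First I would verify that each orbit closure $\overline{\orb\lambda}$ (for $\lambda$ a \emph{partition}, i.e.\ no infinite terms) is a maximal divisorial set. Since $\orb\lambda$ is a cylinder in $\arc\M$ by Proposition~\ref{orbdec:cyl} and $\M$ is smooth, $\orb\lambda$ is fat; being irreducible and constructible, it determines a divisorial valuation $\nu_\lambda = \nu_{\orb\lambda}$ (the order of vanishing along a general arc). Using the explicit representative $\delta_\lambda = \operatorname{diag}(t^{\lambda_1},\dots,t^{\lambda_r})$, one computes $\nu_\lambda$ on the generators of $\OO_\M$, or more usefully on the invariant ideals $\II_{\detvar i}$: the order of contact of $\delta_\lambda$ with $\detvar i$ is $\lambda_{r-i} + \cdots + \lambda_r$ (as in the proof of Proposition~\ref{orbdec:cont}). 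This shows $\nu_\lambda$ is a genuine divisorial valuation and that distinct partitions give distinct valuations, since the vector $(\lambda_r, \lambda_r+\lambda_{r-1}, \dots)$ determines $\lambda$. Then I would invoke Theorem~\ref{orbposet:main}: among all orbits inducing a valuation $\nu$ with $\nu(\II_{\detvar i}) = \lambda_{r-i}+\cdots+\lambda_r$ for all $i$, the orbit $\orb\lambda$ is the unique minimal one in the co-domination order, hence $\overline{\orb\lambda}$ is the \emph{largest} among the corresponding orbit closures; combined with the fact that any divisorial set inducing $\nu_\lambda$ has an $\arc G$-translate (its closure being invariant under the connected group $\arc G$... more carefully: the maximal divisorial set is invariant because $\nu_\lambda$ is $G$-invariant, so its generic point lies in some $\orb\mu$ with $\nu_\mu = \nu_\lambda$, forcing $\mu = \lambda$), this identifies $\overline{\orb\lambda}$ with the maximal divisorial set of $\nu_\lambda$.

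For the converse, given a $G$-invariant divisorial valuation $\nu$ with maximal divisorial set $\C$, I would argue that $\C$ is $\arc G$-invariant: the $G$-action on $\M$ induces the $\arc G$-action on $\arc\M$, a $G$-invariant valuation has $G$-invariant valuation ring inside $\CC(\M)$, and the maximal divisorial set — being canonically attached to $\nu$ as the union of all divisorial sets inducing $\nu$ — is carried to itself by any automorphism fixing $\nu$, in particular by every element of $G$, hence (by functoriality and a density/continuity argument) by every element of $\arc G$. Being irreducible, fat, closed and $\arc G$-invariant, $\C$ contains a dense orbit; that orbit's generic point generates an orbit $\orb\lambda$ with $\overline{\orb\lambda} = \C$, and $\lambda$ must be a genuine partition since orbits with infinite terms lie inside $\arc{\detvar{r-1}}$ (a proper subvariety of $\M$), which would make $\C$ thin, contradicting fatness. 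Uniqueness of $\lambda$ follows from the bijectivity in Proposition~\ref{orbdec:orbdec}.

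\textbf{The main obstacle} I anticipate is the $\arc G$-invariance of the maximal divisorial set — i.e.\ passing from the evident $G$-invariance of $\nu$ to invariance of $\C$ under the full (infinite-dimensional, non-Noetherian) arc group $\arc G$. One cannot simply say ``$\arc G$ is connected so it preserves every component,'' since $\arc G$ need not act continuously in the naive sense on the non-finite-type scheme $\arc\M$; the clean way is to observe that $\arc G$ acts on $\arc\M$ through an action functorially induced by $G$ acting on $\M$, so each single element $g \in \arc G$ gives an automorphism of $\arc\M$ over $\CC$ which commutes with truncation and which acts on valuations of $\CC(\M)$ through the induced automorphism of $\CC(\M) \otimes \CC(\!(t)\!)$-data; since $\nu_\C = \nu$ is intrinsic to $\C$ and $g$ sends $\C$ to the maximal divisorial set of $g_*\nu = \nu$, we get $g\cdot\C = \C$. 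Making this precise — in particular checking that $g_*\nu_\C = \nu_{g\cdot\C}$ and that $g\in\arc G$ really does fix $\nu$ and not merely its restriction to $\OO_\M$ — is the one spot requiring genuine care; everything else is bookkeeping with the partition combinatorics already established.
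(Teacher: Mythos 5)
Your approach is correct in substance and takes a genuinely different route from the paper's. The paper constructs the partition $\lambda$ explicitly by first writing $\nu(f)=\min_{\mu\in\Sigma}\nu_\mu(f)$ over the fat orbits $\orb\mu$ contained in $\C$, deducing that $\nu$ is determined by the numbers $\nu(\II_{\detvar k})$, setting $\lambda_r+\cdots+\lambda_{r-k}=\nu(\II_{\detvar k})$, and then proving that $\lambda$ really is non-increasing via the ideal containment $\II_{\detvar k}\,\II_{\detvar {k-2}}\subset\II_{\detvar{k-1}}^2$; density of $\orb\lambda$ in $\C$ is then obtained from Theorem~\ref{orbposet:main} by showing $\lambda\triangleleft\mu$ for every $\mu\in\Sigma$. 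You instead go straight to the Remark after Proposition~\ref{orbdec:cyl} (every schematic point, in particular the generic point of $\C$, lies in an orbit of a closed point), so that once $\arc G$-invariance of $\C$ is granted, the generic point singles out a unique orbit $\orb\lambda$ whose closure must equal $\C$, and $\lambda$ has no infinite terms because $\C$ is fat. Your argument is shorter, skips the ideal-containment check and the poset comparison entirely, and makes the logical dependence on the Remark explicit — that is a real simplification. Two cautions: (i) your first-direction invocation of Theorem~\ref{orbposet:main} (``among all orbits inducing a valuation $\nu$ with $\nu(\II_{\detvar i})=\lambda_{r-i}+\cdots+\lambda_r$...'') is misstated, since exactly one orbit induces a given invariant valuation; you recover from this in the parenthetical, but the sentence as written is confused. (ii) your phrase ``$\C$ contains a dense orbit; that orbit's generic point generates an orbit $\orb\lambda$ with $\overline{\orb\lambda}=\C$'' is circular — the correct chain is that $\C$'s generic point lies in $\orb\lambda$ by the Remark, invariance gives $\orb\lambda\subset\C$, and then $\overline{\orb\lambda}=\C$. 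Finally, your identification of the $\arc G$-invariance of $\C$ as the one point requiring genuine care is well-placed: the paper asserts it in a single sentence, and both proofs ultimately rest on the same fact that (fat) orbits are determined by contact orders with the invariant ideals $\II_{\detvar k}$, which is what makes the action of non-constant elements of $\arc G$ preserve the induced valuation.
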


  \begin{proof}
    \def\c{\orb{}}%

    Recall from Section \ref{sec-clv} (or see \cite{Ish06}) that $\c$ is the
    union of the fat sets of \arc \M that induce the valuation $\nu$.
    Therefore, since $\nu$ is $G$-invariant, \c is \arc G-invariant and can be
    written as a union of orbits. Note that the thin orbits of \arc\M are all
    contained in $\arc{\detvar{r-1}}$, and that \c is itself fat, so \c must
    contain a fat orbit. Let $\Sigma \subset \pt r$ be the set of partitions
    indexing fat orbits contained in \c. For $\mu \in \Sigma$ we denote by
    $\nu_\mu$ the valuation induced by \orb\mu. Then, for $f \in \OO_\M$ we
    have:
    \[
      \nu(f) 
      \quad = \quad
      \min_{\gamma \in \c} \,\, \{ \, \ord_\gamma(f) \, \}
      \quad = \quad
      \min_{\mu \in \Sigma} \,\, \min_{\gamma \in \orb\mu} 
        \,\, \{ \, \ord_\gamma(f) \, \}
      \quad = \quad
      \min_{\mu \in \Sigma} \,\, \{ \, \nu_\mu(f) \, \}.
    \]
    As a consequence, since $\nu_\mu$ is determined by its value on the ideals
    $\II_{\detvar0}$,\dots,$\II_{\detvar{r-1}}$, the same property holds for
    $\nu$.  Let $\lambda = (\lambda_1,\dots,\lambda_r)$ be such that
    $\nu(\II_{\detvark}) = \lambda_r + \dots + \lambda_{r-k}$. From the fact
    that $\II_{\detvark} \, \II_{\detvar{k-2}} \subset \II_{\detvar{k-1}}^2$
    we deduce that $\lambda_k \ge \lambda_{k+1}$, and we get a partition
    $\lambda \in \pt r$ whose associated orbit \orb\lambda induces the
    valuation $\nu$ (so $\lambda \in \Sigma$). The proposition follows if we
    show that \orb\lambda is dense in \c.

    Consider $\mu \in \Sigma$. Since $\orb\mu \subset \c$, we know that
    $\nu_\mu \ge \nu$, and we get that
    \[
      \mu_r + \dots + \mu_{r-k} 
      \,\, = \,\,
      \nu_\mu(\II_{\detvark})
      \,\, \ge \,\,
      \nu(\II_{\detvark})
      \,\, = \,\, \lambda_r + \dots + \lambda_{r-k}.
    \]
    Hence $\lambda \triangleleft \mu$, and Theorem \ref{orbposet:main} tells us that
    \orb\mu is contained in the closure of \orb\lambda, as required.
  \end{proof}

  \begin{proposition}[Divisorial sets = Orbit closures, \detvark]
    \def\c{\orb{}}%
    
    Given a partition
    $\lambda = (\lambda_1, \dots, \lambda_\ell) \in \pt k$ of length at most
    $k$, denote by $\lambda^+ = (\infty, \dots, \infty, \lambda_1, \dots,
    \lambda_\ell) \in \ept r$ the pre-partition obtained by adjoining $r-k$
    infinities.
    Let $\nu$ be a $G$-invariant divisorial valuation over \detvark, and let
    \c be the associated maximal divisorial set in \arc\detvark. Then there
    exists a unique partition $\lambda \in \pt k$ such that the orbit
    $\orb{\lambda^+}$ is dense in \c.  Conversely, the closure of
    $\orb{\lambda^+}$, where $\lambda \in \pt k$, is a maximal divisorial set
    in \arc\detvark associated to a $G$-invariant divisorial valuation.
  \end{proposition}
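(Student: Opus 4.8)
The plan is to mirror the proof of Proposition~\ref{discreps:diveqorb}, now carried out inside $\arc\detvark$ and with the chain $\detvar0 \subset \dots \subset \detvar{k-1}$ of $G$-invariant subvarieties of $\detvark$ playing the role of $\detvar0 \subset \dots \subset \detvar{r-1}$. Since $\nu$ is $G$-invariant, its maximal divisorial set $\C \subset \arc\detvark$ is $\arc G$-invariant, hence a union of $\arc G$-orbits. By Proposition~\ref{orbdec:orbdec} an orbit of $\arc\detvark$ is thin precisely when it lies in $\arc{\detvar{k-1}}$, and the fat orbits are exactly the $\orb{\mu^+}$ with $\mu \in \pt k$; as $\C$ is fat it contains such a fat orbit. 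Let $\Sigma \subseteq \pt k$ index the fat orbits contained in $\C$, and for $\mu \in \Sigma$ write $\nu_{\mu^+}$ for the valuation of $\CC(\detvark)$ induced by $\orb{\mu^+}$.

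The first substantive point is the identity $\nu(f) = \min_{\gamma \in \C}\ord_\gamma(f) = \min_{\mu \in \Sigma}\nu_{\mu^+}(f)$ for $f \in \OO_{\detvark}$, where the thin orbits contained in $\C$ are irrelevant because every $\gamma \in \C$ already satisfies $\ord_\gamma(f) \ge \nu(f)$. Applying the contact computation of Proposition~\ref{orbdec:cont} to $\delta_{\mu^+}$ shows that $\nu_{\mu^+}$ is a $G$-invariant divisorial valuation of $\CC(\detvark)$ with $\nu_{\mu^+}(\overline{\II_{\detvar j}}) = \mu_k + \mu_{k-1} + \dots + \mu_{k-j}$ for $0 \le j \le k-1$, where $\overline{\II_{\detvar j}} \subset \OO_{\detvark}$ is the ideal of $\detvar j$ inside $\detvark$. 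Since an invariant valuation of $\CC(\detvark)$ is determined by its values on $\overline{\II_{\detvar0}}, \dots, \overline{\II_{\detvar{k-1}}}$ (the invariant ideals of $\OO_{\detvark}$ being controlled by these exactly as in the case of $\M$), so is $\nu$.

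Next I would define $\lambda = (\lambda_1, \dots, \lambda_k)$ by $\lambda_k + \dots + \lambda_{k-j} = \nu(\overline{\II_{\detvar j}})$ for $j = 0, \dots, k-1$. The inclusions $\overline{\II_{\detvar j}} \subseteq \overline{\II_{\detvar{j-1}}}$ give $\lambda_{k-j} \ge 0$; the relations $\overline{\II_{\detvar j}}\cdot\overline{\II_{\detvar{j-2}}} \subseteq \overline{\II_{\detvar{j-1}}}^{2}$, inherited from the classical ones on $\M$, force $\lambda$ to be non-increasing; and finiteness of $\nu$ on the nonzero ideals $\overline{\II_{\detvar j}}$ makes every $\lambda_i$ finite. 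Thus $\lambda \in \pt k$ and $\nu = \nu_{\lambda^+}$, so $\orb{\lambda^+} \subseteq \C$ and $\lambda \in \Sigma$. For density, given $\mu \in \Sigma$ one has $\nu_{\mu^+} \ge \nu = \nu_{\lambda^+}$, hence $\mu_k + \dots + \mu_{k-j} \ge \lambda_k + \dots + \lambda_{k-j}$ for all $j$, i.e.\ $\lambda \triangleleft \mu$ in $\pt k$; adjoining $r-k$ leading infinities preserves co-domination, so $\lambda^+ \triangleleft \mu^+$ and Theorem~\ref{orbposet:main} gives $\orb{\mu^+} \subseteq \overline{\orb{\lambda^+}}$. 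The same contact estimate, using $\ord_{\delta_\sigma}(\overline{\II_{\detvar j}}) = \sigma_r + \dots + \sigma_{r-j} \ge \nu(\overline{\II_{\detvar j}})$, shows $\lambda^+ \triangleleft \sigma$ for every thin orbit $\orb\sigma \subseteq \C$ as well. Hence $\C \subseteq \overline{\orb{\lambda^+}}$, so $\orb{\lambda^+}$ is dense in $\C$, and uniqueness follows from Theorem~\ref{orbposet:main}.

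For the converse, given $\lambda \in \pt k$ the orbit $\orb{\lambda^+}$ is a fat, $\arc G$-invariant cylinder of $\arc\detvark$ (Propositions~\ref{orbdec:orbdec} and~\ref{orbdec:cyl}), so it induces a $G$-invariant valuation $\nu_{\lambda^+}$ of $\CC(\detvark)$; once $\nu_{\lambda^+}$ is known to be divisorial, the forward direction together with Theorem~\ref{orbposet:main} identifies $\overline{\orb{\lambda^+}} \cap \arc\detvark$ with its maximal divisorial set, just as in Proposition~\ref{discreps:diveqorb}. The step I expect to be the main obstacle is precisely this divisoriality: one wants to exhibit $\overline{\orb{\lambda^+}} \cap \arc\detvark$ as an irreducible component of a multi-contact locus $\operatorname{Cont}^{\mathbf m}(\overline{\II_{\detvar0}}, \dots, \overline{\II_{\detvar{k-1}}})$ in $\arc\detvark$ for the appropriate multi-index $\mathbf m$ — which requires the several-ideals version of the minimality analysis of Proposition~\ref{jetcomp:cont}, showing that $\lambda^+$ is the generic orbit of that component — and then invoke the standard fact (\cite{ELM04}) that irreducible components of contact loci carry divisorial valuations. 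The other point that needs care, although it is only a routine transcription of the smooth case, is the bookkeeping with the ideals $\overline{\II_{\detvar j}}$ on the singular variety $\detvark$: that invariant valuations of $\CC(\detvark)$ really are determined by their values on these ideals, and that the classical relations among the $\II_{\detvar j}$ survive passage to $\OO_{\detvark}$.
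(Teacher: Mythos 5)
Your proposal is essentially the paper's approach: the paper's own proof for this proposition is the single line ``Analogous to the proof of Proposition~\ref{discreps:diveqorb},'' and your argument is precisely that analogy carried out in detail (replacing the chain $\detvar0\subset\cdots\subset\detvar{r-1}$ in $\M$ by $\detvar0\subset\cdots\subset\detvar{k-1}$ in $\detvark$, identifying fat orbits with the $\orb{\mu^+}$, and using Theorem~\ref{orbposet:main} for density). The two points you flag as needing care — divisoriality of $\nu_{\lambda^+}$ for the converse, and that invariant valuations on the singular $\detvark$ are determined by the $\overline{\II_{\detvar j}}$ — are indeed glossed over by the paper even in the model $\M$ case, where they are left to the cited general theory of fat cylinders and maximal divisorial sets (\cite{Ish06}, \cite{dFEI08}); your instinct to invoke the irreducible-component-of-a-contact-locus characterization is exactly how that theory disposes of them.
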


  \begin{proof}
    Analogous to the proof of \ref{discreps:diveqorb}.
  \end{proof}

  We now proceed to compute discrepancies for invariant divisorial valuations.
  These are closely related to the codimensions of the corresponding maximal
  divisorial sets, which by the previous propositions are just given by orbit
  closures. Since orbits are cylinders, their codimension can be computed by
  looking at the corresponding orbit in a high enough jet scheme. But jet
  schemes are of finite type, so orbits have a finite dimension that can be
  computed via the codimension of the corresponding stabilizer. For this
  reason, we will try to understand the structure of the different stabilizers
  in the jet schemes \jetn G. 

  Recall from Definition \ref{orbdec:deforb} that \orb\lambda is the orbit
  containing the following matrix:
  \[
    \delta_\lambda = \begin{pmatrix}
      0      & \cdots & 0       & t^{\lambda_1} & 0              & \cdots & 0            \\ 
      0      & \cdots & 0       & 0             & t^{\lambda_2}  & \cdots & 0            \\ 
      \vdots &        & \vdots  & \vdots        & \vdots         & \ddots & \vdots       \\ 
      0      & \cdots & 0       & 0             & 0              & \cdots & t^{\lambda_r}  
    \end{pmatrix}.
  \]
  This matrix defines an element of the jet scheme \jetn\M as long as $n$ is
  greater than the co-length of $\lambda$; the corresponding \jetn G-orbit in
  \jetn\M is denoted by $\torb\lambda n$. The following proposition determines
  the codimension of the stabilizer of $\delta_\lambda$ in the jet group \jetn
  G.

  \begin{proposition}
    \label{discreps:stabcodim}
    \def\sta{\ensuremath{H_{\lambda,n}}\xspace}%

    Let $\lambda\in\pt r$ be a partition of length at most $r$, and let $n$ be
    a positive integer greater than the highest term of $\lambda$. Let \sta
    denote the stabilizer of $\delta_\lambda$ in the group \jetn G.  Then
    \[
      \operatorname{codim}(\sta, \jetn G) = 
      (n+1) r s - \sum_{i=1}^r \lambda_i(s-r +2i -1).
    \]
  \end{proposition}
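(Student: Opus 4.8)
The plan is to compute the dimension of the stabilizer $H_{\lambda,n}$ directly, working inside the explicit description of the jet group $\jetn G = \pjetn{\GLr} \times \pjetn{\GLs}$ acting on $\jetn\M$ by $(g,h)\cdot A = g\,A\,h^{-1}$, all with coefficients in the ring $R = \tpsr n\CC$. An element $(g,h)$ stabilizes $\delta_\lambda$ precisely when $g\,\delta_\lambda = \delta_\lambda\,h$. First I would reduce to the square case: since the nonzero columns of $\delta_\lambda$ are only the last $r$ of the $s$ columns, write $h$ in block form according to the splitting of the $s$ columns into the first $s-r$ and the last $r$; the condition $g\,\delta_\lambda = \delta_\lambda\,h$ then constrains only the bottom $r$ rows of $h$ (the rows indexed by the nonzero columns of $\delta_\lambda$), leaving the top $s-r$ rows of $h$ completely free. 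Those free rows contribute $(n+1)(s-r)\cdot s$ to the dimension of $H_{\lambda,n}$, and this is exactly the term that accounts for the discrepancy between the $rs$ and the $r^2$ worth of coordinates. So it remains to analyze the square $r\times r$ picture: pairs $(g, h')$ with $g,h' \in \pjetn{\GLr}$ and $g\,D = D\,h'$, where $D = \operatorname{diag}(t^{\lambda_1},\dots,t^{\lambda_r})$ over $R$.

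For the square part, the key computation is entrywise. Writing $g = (g_{ij})$ and $h' = (h_{ij})$ with entries in $R$, the equation $g D = D h'$ reads $g_{ij}\, t^{\lambda_j} = t^{\lambda_i}\, h_{ij}$ for all $i,j$. Since $\lambda_1 \ge \dots \ge \lambda_r$, for $i \le j$ we have $\lambda_i \ge \lambda_j$, and the equation forces $h_{ij} = t^{\lambda_i - \lambda_j} g_{ij}$ in $R = \CC[t]/(t^{n+1})$, so $h_{ij}$ is determined by $g_{ij}$ with no constraint on $g_{ij}$ itself; symmetrically, for $i \ge j$, $g_{ij} = t^{\lambda_j - \lambda_i} h_{ij}$ is determined by $h_{ij}$, with $h_{ij}$ free. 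The free parameters are thus: $g_{ij}$ for $i\le j$ (an element of $R$ each) and $h_{ij}$ for $i>j$; but one must be careful on the diagonal, where $g_{ii} = h_{ii}$ and this common value is a single free element of $R$, subject only to being a unit — an open condition that does not change dimension. Counting, the dimension of the square stabilizer is $(n+1)$ times the number of ``free scalar slots,'' but the count is weighted: for the pair of positions $(i,j)$ and $(j,i)$ with $i<j$, one gets one full copy of $R$ (dimension $n+1$) from $g_{ij}$ and one full copy from $h_{ji}$, with $g_{ji}$ and $h_{ij}$ truncated to $t^{\lambda_j-\lambda_i}$-multiples, i.e. living in $t^{\lambda_j - \lambda_i}R \cong \CC[t]/(t^{n+1-(\lambda_j-\lambda_i)})$, of dimension $n+1-(\lambda_j-\lambda_i)$ (or $0$ if this is negative, but $n$ is chosen larger than the top term so this does not occur). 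Summing $\max(0, n+1-(\lambda_j-\lambda_i))$ over all ordered pairs and adding the diagonal contribution $r(n+1)$, then taking the complement inside $\dim \pjetn{\GLr}\times\pjetn{\GLr} = 2(n+1)r^2$, I expect to land on $\operatorname{codim} = (n+1)r^2 - \sum_i \lambda_i(r - 2i + 1) \cdot(-1)\ldots$ — more precisely, the sum $\sum_{i<j}(\lambda_i - \lambda_j)$ rearranges, via the standard identity $\sum_{i<j}(\lambda_i-\lambda_j) = \sum_i \lambda_i (r - 2i + 1)$, into the shape $\sum_i \lambda_i(2i - 1 - r)$; combining with the row-block contribution $(n+1)(s-r)s$ and bookkeeping yields $(n+1)rs - \sum_i \lambda_i(s - r + 2i - 1)$, as claimed.

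The main obstacle is the careful bookkeeping of which scalar coordinates are genuinely free versus which are determined, especially (a) getting the diagonal right so one does not double-count the shared value $g_{ii} = h_{ii}$, and (b) correctly handling the truncation: an entry constrained to lie in $t^{m}R$ still contributes positively to dimension when $m \le n$, and the hypothesis that $n$ exceeds the maximal term of $\lambda$ is exactly what guarantees $\lambda_i - \lambda_j \le n$ for all $i<j$, so every such contribution is $n+1 - (\lambda_i - \lambda_j) \ge 1$ rather than being clipped to zero. Once the free-parameter count is organized as ``$2(n+1)r^2$ minus the number of independent constraints,'' the constraint count is precisely $\sum_{i<j}(\lambda_i - \lambda_j)$ from the square part (one constraint per power of $t$ killed on each off-diagonal entry, counted once), and the unitarity conditions on the diagonal are open and dimension-neutral. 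I would then invoke the identity $\sum_{i<j}(\lambda_i - \lambda_j) = \sum_{i=1}^r \lambda_i(r + 1 - 2i)$ — which is immediate by counting, for fixed $i$, how many $j$ satisfy $j > i$ versus $j < i$ — to rewrite the codimension in the stated closed form, absorbing the sign into $s - r + 2i - 1$ after adding back the $(n+1)(s-r)s$ contribution coming from the free top rows of $h$.
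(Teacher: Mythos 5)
Your strategy---write the stabilizer equation $g\,\delta_\lambda = \delta_\lambda\,h$ entry by entry over $R = \tpsr n\CC$ and count dimensions---is the right one and is the same as the paper's. But your dimension count contains a genuine error that changes the answer.

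The central issue is the constraint that the equation at entry $(i,j)$ imposes on the pair $(g_{ij},h_{ij})$. You claim that, for $i \le j$, the equation $g_{ij}\,t^{\lambda_j} = t^{\lambda_i}\,h_{ij}$ lets $g_{ij}$ range freely over $R$ while determining $h_{ij}$, with a ``truncation'' costing $\lambda_i - \lambda_j$ dimensions. This is wrong on two counts. First, $g_{ij}$ is \emph{not} free: for any solution $h_{ij}$ to exist one needs $t^{\lambda_j}g_{ij} \in (t^{\lambda_i})$, which forces $g_{ij} \in (t^{\lambda_i-\lambda_j})$, a subspace of dimension only $n+1-(\lambda_i-\lambda_j)$. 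Second, once $g_{ij}$ is in that subspace, $h_{ij}$ is \emph{not} determined; it is determined only modulo $\ker(\,\cdot\,t^{\lambda_i})$, a $\lambda_i$-dimensional space. The joint solution space for the pair $(g_{ij},h_{ij})$ is therefore of dimension $[n+1-(\lambda_i-\lambda_j)] + \lambda_i = (n+1) + \min\{\lambda_i,\lambda_j\}$, so the single entry imposes $(n+1)-\min\{\lambda_i,\lambda_j\}$ independent conditions, not $|\lambda_i-\lambda_j|$. (The same oversight occurs on the diagonal: $g_{ii}\,t^{\lambda_i} = t^{\lambda_i}\,h_{ii}$ only forces $g_{ii}\equiv h_{ii}\pmod{t^{n+1-\lambda_i}}$, so the pair has dimension $n+1+\lambda_i$, not $n+1$.) A quick sanity check with $\lambda = (m,0,\dots,0)$ and $r=s$ exposes the gap: the codimension is $(n+1)r^2 - m$, but your constraint count $\sum_{i<j}(\lambda_i-\lambda_j) = m(r-1)$ added to the free-parameter codimension $(n+1)r^2$ gives $(n+1)r^2 + m(r-1)$, off by $mr$.

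There is a second gap in the reduction to the square picture. You declare the top $(s-r)\times s$ block of $h$ free (correct) but then drop from consideration the bottom-left $r\times(s-r)$ block of $h$. Those entries are not free: for columns $j\le s-r$ the stabilizer equation reads $t^{\lambda_i}h_{s-r+i,j}=0$, constraining each such entry to $\ker(\,\cdot\,t^{\lambda_i})$, of dimension $\lambda_i$. That block contributes $(s-r)\sum_i \lambda_i$ to $\dim H_{\lambda,n}$ and is exactly the $s-r$ piece of the coefficient $s-r+2i-1$; it cannot be discarded, and the top rows alone do not ``account for the discrepancy.'' The paper's argument handles both issues at once: every one of the $rs$ entrywise equations has the form $t^{a}\ast = t^{b}\ast$ and cuts the dimension by $(n+1)-\min\{a,b\}$, and $\sum_{i,j}\min\{a(i,j),b(i,j)\}$ is evaluated by grouping the entries on which the minimum equals $\lambda_k$ into $\Gamma$-shaped regions of size $s-r+2k-1$.
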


  \begin{proof}
    \def\sta{\ensuremath{H_{\lambda,n}}\xspace}%

    Pick $(g,h) \in \jetn G = \jetn{(\GLr)} \times \jetn{(\GLs)}$. Then:
    \begin{gather*}
      (g,h) \in \sta
      \quad \Longleftrightarrow \quad
      g \cdot \delta_\lambda \cdot h^{-1} = \delta_\lambda
      \quad \Longleftrightarrow \quad
      g \cdot \delta_\lambda = \delta_\lambda \cdot h
      \quad \Longleftrightarrow \quad \\ \\
      \begin{pmatrix}
        0      & \cdots & 0       & t^{\lambda_1} \ast & t^{\lambda_2} \ast & \cdots & t^{\lambda_r} \ast\\ 
        0      & \cdots & 0       & t^{\lambda_1} \ast & t^{\lambda_2} \ast & \cdots & t^{\lambda_r} \ast\\ 
        \vdots &        & \vdots  & \vdots             & \vdots             & \ddots & \vdots            \\ 
        0      & \cdots & 0       & t^{\lambda_1} \ast & t^{\lambda_2} \ast & \cdots & t^{\lambda_r} \ast  
      \end{pmatrix}
      =
      \begin{pmatrix}
        t^{\lambda_1} \ast & \cdots & t^{\lambda_1} \ast & t^{\lambda_1} \ast & t^{\lambda_1} \ast & \cdots & t^{\lambda_1} \ast \\
        t^{\lambda_2} \ast & \cdots & t^{\lambda_2} \ast & t^{\lambda_2} \ast & t^{\lambda_2} \ast & \cdots & t^{\lambda_2} \ast \\
        \vdots &        & \vdots  & \vdots        & \vdots         & \ddots & \vdots       \\ 
        t^{\lambda_r} \ast & \cdots & t^{\lambda_r} \ast & t^{\lambda_r} \ast & t^{\lambda_r} \ast & \cdots & t^{\lambda_r} \ast \\
      \end{pmatrix}.
    \end{gather*}
    This equality of matrices gives one equation of the form $t^{a(i,j)}\ast =
    t^{b(i,j)}\ast$ for each entry $(i,j)$ in a $r \times s$ matrix. We have
    $a(i,j)=\lambda_{j-s+r}$ and $b(i,j)=\lambda_i$ (assume $\lambda_j = \infty$
    for $j<0$).
    
    Each equation of the form $t^a\ast = t^b\ast$ gives $(n+1)-\min\{a,b\}$
    independent equations on the coefficients of the power series, so it reduces
    the dimension of the stabilizer by $(n+1)-\min\{a,b\}$.  The entries $(i,j)$
    for which $\min\{a(i,j), b(i,j)\} = \lambda_k$ form an
    \parbox{1em}{~}\turnbox{90}{L}-shaped region of the $r\times s$ matrix, as
    we illustrate in the following diagram:
    \[
    \raisebox{-6.5ex}{
    \begin{tikzpicture}[scale=.7, thick]
      \draw[thick,rounded corners=5pt] (1,-1) -- (5,-1) -- (5,0);
      \draw[thick,rounded corners=5pt] (1,-2) -- (6,-2) -- (6,0);
      \draw[thick,rounded corners=5pt] (1,-3) -- (7,-3) -- (7,0);
      \draw[thick,rounded corners=5pt] (1,-4) -- (8,-4) -- (8,0);
      \draw[thick,rounded corners=5pt] (1,-4) -- (1,0) -- (8,0);
      \draw[dashed] (4,0) -- (4,-4);
      \draw (4.5,-.5) node {$\lambda_1$};
      \draw (5.5,-1.5) node {$\lambda_2$};
      \draw (6.4,-2.3) node {$\ddots$};
      \draw (7.5,-3.5) node {$\lambda_r$};
      \draw[<->] (.5,-.2) -- (.5,-3.8);
      \draw[<->] (1.2,-4.5) -- (3.8,-4.5);
      \draw[<->] (4.2,-4.5) -- (7.8,-4.5);
      \draw (-.1,-2) node {$r$};
      \draw (2.5,-5.1) node {$s-r$};
      \draw (6,-5.1) node {$r$};
    \end{tikzpicture}
    }
    \]
    The region corresponding to $\lambda_i$ contains $(s-r+2i-1)$ entries, and
    the result follows.
  \end{proof}

  \begin{proposition}
    \label{discreps:orbcodim}

    Let $\lambda\in\ept r$ be a pre-partition of length at most $r$, and
    consider its associated \arc\G-orbit \orb\lambda in \arc\M. If $\lambda$
    contains infinite terms, \orb\lambda has infinite codimension. If
    $\lambda$ is a partition, the codimension is given by:
    \[
      \operatorname{codim} (\orb\lambda, \arc\M) = 
      \sum_{i=1}^r \lambda_i(s-r +2i -1).
    \]
  \end{proposition}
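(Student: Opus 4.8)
The plan is to push the problem down to a finite jet level, where $\orb\lambda$ becomes a $\jetn G$-orbit and its dimension can be extracted from the stabilizer computation already carried out in Proposition~\ref{discreps:stabcodim}. The key point justifying this descent is that, by Proposition~\ref{orbdec:cyl}, $\orb\lambda$ is a cylinder in $\arc\M$ when $\lambda$ is a partition, and since $\M$ is smooth the codimension of a cylinder in $\arc\M$ agrees with the codimension of its truncation in $\jetn\M$ for all $n\gg 0$ (the stabilization recalled in Section~\ref{sec-motivic}). So the whole proof is a descent plus a bookkeeping computation.

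First I would treat the case where $\lambda$ is a partition. Fix $n$ larger than the maximal term of $\lambda$; by Proposition~\ref{orbdec:trunc} the image of $\orb\lambda$ in $\jetn\M$ is $\torb\lambda n = \jetn G\cdot\delta_\lambda$. Presenting $\torb\lambda n$ as the quotient of $\jetn G$ by the stabilizer $H_{\lambda,n}$ of $\delta_\lambda$ gives $\dim\torb\lambda n = \operatorname{codim}(H_{\lambda,n},\jetn G)$, which Proposition~\ref{discreps:stabcodim} evaluates as $(n+1)rs - \sum_{i=1}^r\lambda_i(s-r+2i-1)$. Since $\M$ is smooth, $\dim\jetn\M = (n+1)rs$, so the $(n+1)rs$ terms cancel and $\operatorname{codim}(\torb\lambda n,\jetn\M) = \sum_{i=1}^r\lambda_i(s-r+2i-1)$. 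As this value is independent of $n$, it is $\operatorname{codim}(\orb\lambda,\arc\M)$.

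Next I would handle the case where $\lambda$ has infinite terms. The conceptual reason for the claim is that $\orb\lambda\subseteq\arc{\detvar{r-1}}$ by Proposition~\ref{orbdec:orbdec}, so $\orb\lambda$ is thin in $\arc\M$. To see the divergence explicitly, note that $\psi_n(\orb\lambda) = \torb{\overline\lambda}n$, where $\overline\lambda$ is the level-$(n+1)$ truncation of $\lambda$: for $n$ large, each infinite term becomes $n+1$ (so the corresponding rows of $\delta_{\overline\lambda}$ vanish) and the finite terms are unchanged. The counting in the proof of Proposition~\ref{discreps:stabcodim} then applies unchanged, the terms equal to $n+1$ contributing nothing, and yields $\operatorname{codim}(\torb{\overline\lambda}n,\jetn\M) = (n+1)\,c + \sum_{i:\lambda_i<\infty}\lambda_i(s-r+2i-1)$ with $c=\sum_{i:\lambda_i=\infty}(s-r+2i-1)\geq 1$; this tends to $\infty$ with $n$.

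Essentially all the substance is in Proposition~\ref{discreps:stabcodim}, which is already available, so there is no genuine obstacle; the only things needing care are the legitimacy of computing the codimension of the cylinder $\orb\lambda$ at finite level (for which smoothness of $\M$ is used) and keeping the dimension bookkeeping between $\jetn G$, the stabilizer $H_{\lambda,n}$, and $\jetn\M$ straight. The infinite case needs only the observation that $\orb\lambda$ then lies in the arc space of a proper subvariety.
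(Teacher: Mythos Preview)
Your proposal is correct and follows essentially the same route as the paper: reduce to a finite jet level via Proposition~\ref{orbdec:trunc}, read off the orbit dimension from the stabilizer codimension in Proposition~\ref{discreps:stabcodim}, and subtract from $\dim\jetn\M=(n+1)rs$; for infinite $\lambda$ the paper simply invokes thinness, while you add an explicit divergence check, which is harmless extra detail.
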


  \begin{proof}
    If $\lambda$ contains infinite terms, \orb\lambda is thin, so it has
    infinite codimension. Otherwise Proposition \ref{orbdec:trunc} tells us
    that \orb \lambda is the inverse image of $\torb\lambda n$ under the
    truncation map $\arc \M \to \jetn \M$ for $n$ large enough. Since \M is
    smooth, we see that the codimension of \orb\lambda in \arc\M is the same
    as the codimension of $\torb\lambda n$ in \jetn\M. The dimension of
    $\torb\lambda n$ is the codimension of the stabilizer of $\delta_\lambda$
    in \jetn G. The result now follows from Proposition
    \ref{discreps:stabcodim} and the fact that \jetn\M has dimension
    $(n+1)rs$.
  \end{proof}

  \begin{corollary}
    Let $\nu$ be a $G$-invariant divisorial valuation of \M, and write $\nu
    = q \cdot \operatorname{val}_E$, where $q$ is a positive integer and
    $E$ is a divisor in a smooth birational model above $M$.
    Let $\lambda\in\pt r$ be
    the unique partition such that \orb\lambda induces $\nu$, and let \dis\nu\M be
    the discrepancy of \M along $\nu$. Then
    \[
      \dis\nu\M + q = 
      \sum_{i=1}^r \lambda_i(s-r +2i -1).
    \]
  \end{corollary}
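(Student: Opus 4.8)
The plan is to deduce this statement directly from the two main computations of this section together with the codimension formula for maximal divisorial sets recalled in Section~\ref{sec-nashblup}. Everything has already been arranged so that no new geometry is needed; the proof is essentially a matter of stringing together the right references.

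First I would invoke Proposition~\ref{discreps:diveqorb} to identify the maximal divisorial set $\C_\nu \subset \arc\M$ attached to $\nu$ with the Zariski closure of the orbit $\orb\lambda$, where $\lambda \in \pt r$ is the partition singled out in the hypothesis (in particular it has no infinite terms, being an element of $\pt r$). Since $\orb\lambda$ is a cylinder in $\arc\M$ by Proposition~\ref{orbdec:cyl} and is dense in its closure, the codimension is unchanged by passing to the closure, so $\operatorname{codim}(\C_\nu, \arc\M) = \operatorname{codim}(\orb\lambda, \arc\M)$.

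Next I would apply the discrepancy formula of \cite{dFEI08} quoted in Section~\ref{sec-nashblup}: for the maximal divisorial set $\C_\nu$ of a divisorial valuation $\nu = q\cdot\operatorname{val}_E$ one has $\operatorname{codim}(\C_\nu, \arc\M) = \hat k_\nu(\M) + q$, where $\hat k_\nu(\M)$ denotes the Mather discrepancy. Because $\M$ is smooth, its Nash blowing-up is trivial and the Mather canonical line bundle coincides with $K_\M$, so the Mather discrepancy equals the ordinary discrepancy $\dis\nu\M$ appearing in the statement. Combining this with the previous step and with Proposition~\ref{discreps:orbcodim}, which evaluates $\operatorname{codim}(\orb\lambda, \arc\M) = \sum_{i=1}^r \lambda_i(s-r+2i-1)$, yields the claimed identity.

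There is no serious obstacle here; the result is essentially bookkeeping. The only two points that deserve a word of care are that the orbit and its closure have the same codimension---immediate from $\orb\lambda$ being a dense cylinder inside $\C_\nu$---and that the smoothness of $\M$ is what lets us replace the Mather discrepancy of \cite{dFEI08} by the usual discrepancy $\dis\nu\M$.
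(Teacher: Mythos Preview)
Your proof is correct and follows essentially the same route as the paper: invoke Proposition~\ref{discreps:diveqorb} to identify the maximal divisorial set with the orbit closure, use the codimension formula from Section~\ref{sec-nashblup} together with the smoothness of $\M$, and conclude via Proposition~\ref{discreps:orbcodim}. Your extra remarks about the orbit and its closure having equal codimension and about Mather versus ordinary discrepancy are fine clarifications of points the paper leaves implicit.
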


  \begin{proof}
    From Proposition \ref{discreps:diveqorb} we know that the closure of
    \orb\lambda is the maximal divisorial set associated to $\nu$. Since \M is
    smooth, the log discrepancy $\dis\nu\M + q$ agrees with the
    codimension of the associated maximal divisorial set (see Section
    \ref{sec-nashblup}).  The result now follows from Proposition
    \ref{discreps:orbcodim}.
  \end{proof}

  \begin{theorem}
    \label{dicreps:lct}

    Recall that \M denotes the space of matrices of size $r \times s$ and
    \detvark is the variety of matrices of rank at most $k$. The log canonical
    threshold of the pair $(\M, \detvark)$ is
    \[
      \lct (\M, \detvark) =
      \min_{i=0 \dots k} \frac{(r-i)(s-i)}{k+1-i}.
    \]
  \end{theorem}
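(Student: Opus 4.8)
The plan is to compute the log canonical threshold via \Mustata's formula, which expresses $\lct(\M,\detvark)$ as the infimum over divisorial valuations $\nu$ over $\M$ of the ratio $(\hat k_\nu(\M)+1)/\nu(\II_{\detvark})$ — equivalently, as $\inf_{p} \operatorname{codim}(\contp\detvark,\arc\M)/p$, where the infimum runs over positive integers $p$. Since $\M$ is smooth, Mather discrepancies agree with ordinary discrepancies, and this is the usual jet-theoretic formula for the log canonical threshold (see \cite{Mus01, ELM04, dFEI08}). So the first step is to reduce the computation of $\lct(\M,\detvark)$ to understanding $\operatorname{codim}(\contp\detvark,\arc\M)$ for all $p$.

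Next I would combine the orbit description from the previous sections. By Theorem \ref{orbposet:main} and Proposition \ref{orbdec:cont}, the irreducible components of $\contp\detvark$ are the closures of the orbits $\orb\lambda$ with $\lambda$ a minimal partition (in the order of co-domination) among those with $\lambda_r+\dots+\lambda_{r-k}\ge p$. By Proposition \ref{discreps:orbcodim}, $\operatorname{codim}(\orb\lambda,\arc\M)=\sum_{i=1}^r \lambda_i(s-r+2i-1)$. Therefore
\[
  \operatorname{codim}(\contp\detvark,\arc\M)
  = \min_{\lambda} \sum_{i=1}^r \lambda_i(s-r+2i-1),
\]
the minimum being over partitions $\lambda\in\pt r$ with $\lambda_{r-k}+\dots+\lambda_r\ge p$, and
\[
  \lct(\M,\detvark) = \inf_{p\ge 1}\,\frac{1}{p}\,\min_{\lambda}\,\sum_{i=1}^r \lambda_i(s-r+2i-1).
\]

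The heart of the argument is then the combinatorial optimization: minimize the linear functional $\sum_{i=1}^r \lambda_i(s-r+2i-1)$ over all partitions with $\lambda_{r-k}+\dots+\lambda_r\ge p$, and then minimize the resulting quantity divided by $p$ over $p$. Since the coefficients $s-r+2i-1$ are strictly increasing in $i$, to keep the functional small one wants the ``mass'' of $\lambda$ concentrated in the high-index (small-coefficient) rows; but the partition constraint forces $\lambda_1\ge\dots\ge\lambda_r$, so one cannot make $\lambda_1,\dots,\lambda_{r-k-1}$ smaller than $\lambda_{r-k}$. Using the explicit form of the minimal partitions from Proposition \ref{jetcomp:cont} — namely $\lambda=(d^{\,a+r-k},e^1)$ with $p=(a+1)d+e$, $0\le e<d$ — one reduces to a finite computation: for each such $\lambda$ compute $\sum \lambda_i(s-r+2i-1)$ explicitly and divide by $p$. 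I expect that, after rescaling, the optimum is achieved at the partitions with $e=0$, i.e. $\lambda=(d^{\,i+1})$ for $i=0,\dots,k$ (contributing $(k+1-i)$ to the sum $\lambda_{r-k}+\dots+\lambda_r$ per unit of $d$, hence $p=(i+1)d$ is not quite right — rather one should take $\lambda$ supported on exactly $r-i$ rows so that the constraint reads $(k+1-i)\lambda_{r-i}\ge p$), giving a ratio of the form $\sum_{j=r-i+1}^{r}(s-r+2j-1)\big/(k+1-i)$; and one checks that $\sum_{j=r-i+1}^{r}(s-r+2j-1) = (r-i)(s-i)$ by summing the arithmetic progression. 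Taking the minimum over $i\in\{0,\dots,k\}$ then yields the stated formula $\min_{i}(r-i)(s-i)/(k+1-i)$.

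The main obstacle is the optimization step: one must show both that the candidate partitions $\lambda$ supported on $r-i$ rows with all nonzero entries equal actually minimize the linear functional subject to the constraint (this is where the monotonicity of the coefficients and a rearrangement-type argument enter), and that among all $p$ and all candidates the infimum of the ratio is attained — i.e. that the ratio $\operatorname{codim}(\contp\detvark,\arc\M)/p$ is minimized by taking $p$ divisible by $k+1-i$ and $\lambda$ of the equal-entries form, with no smaller value hiding among the mixed partitions $(d^{a+r-k},e^1)$ with $e>0$. I would handle this by observing that the ratio is, for fixed ``shape'' (fixed support size $r-i$ and equal entries), independent of the common entry $d$, so it suffices to compare the finitely many shapes $i=0,\dots,k$; and a short convexity/interpolation argument shows the mixed partitions never beat the best pure one. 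The identity $\sum_{j=r-i+1}^{r}(s-r+2j-1)=(r-i)(s-i)$ is then a routine arithmetic-series computation.
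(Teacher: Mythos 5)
Your overall strategy matches the paper's: reduce to $\lct = \inf_p \operatorname{codim}(\contp\detvark,\arc\M)/p$ via \Mustata's formula, use the orbit decomposition of contact loci together with Proposition~\ref{discreps:orbcodim}, and then solve the resulting optimization over partitions. Where you diverge is in the optimization step itself. The paper rescales to a continuous problem on rational $r$-tuples, applies the change of variables $\varphi(a)=(a_1-a_2,\dots,a_{r-1}-a_r,a_r)$, and then observes that the constraint region becomes a simplex on which the (linear) objective $\xi(b)=\sum_j b_j\,j(s-r+j)$ attains its minimum at an explicit vertex $P_{r-i}$; your route keeps the discrete list of minimal partitions from Proposition~\ref{jetcomp:cont} and invokes a convexity/interpolation argument to show the pure shapes $(d^{r-i})$ beat the mixed ones $(d^{a+r-k},e^1)$, $e>0$. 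That argument does work (the ratio $\psi(\lambda)/p$ is linear-fractional in $e$ for fixed $d,a$, hence monotone, and the $e\to d$ limit reproduces a pure shape), but it needs to be spelled out; the paper's simplex argument is tighter because the minimizing vertices fall out directly.

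There are also several sign/indexing slips you should fix. First, you write that the coefficients $s-r+2i-1$ are strictly increasing in $i$ and, in the same breath, that one wants the mass in the ``high-index (small-coefficient) rows''; these two statements contradict each other, and the correct picture is the opposite: high index means large coefficient, and the contact constraint on $\lambda_{r-k}+\dots+\lambda_r$ unavoidably forces mass onto those expensive rows, with the partition constraint then dragging $\lambda_1,\dots,\lambda_{r-k-1}$ up as well. Second, for $\lambda=(d^{r-i})$ one has $\psi(\lambda)=d\sum_{j=1}^{r-i}(s-r+2j-1)$, so the ratio is $\sum_{j=1}^{r-i}(s-r+2j-1)/(k+1-i)$, not $\sum_{j=r-i+1}^{r}(s-r+2j-1)/(k+1-i)$ as you wrote; the latter sum evaluates to $i(s+r-i)$, not $(r-i)(s-i)$, and only the corrected bounds give the identity $\sum_{j=1}^{r-i}(s-r+2j-1)=(r-i)(s-r+r-i)=(r-i)(s-i)$ used in the final answer. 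Once these are corrected and the convexity step for the mixed partitions is written out, the argument is complete and essentially parallel to the paper's.
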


  \begin{proof}
    We will use \Mustata's formula (see \cite[Cor.~3.2]{ELM04}) to compute log
    canonical thresholds:
    \[
      \lct(\M, \detvark) = 
      \min_n \left\{ \frac{\operatorname{codim}(\jetn \detvark, \jetn \M)}{n+1} \right\}
      =
      \min_p \left\{ \frac{\operatorname{codim}(\contp \detvark, \arc \M)}{p} \right\}.
    \]
    Let $\Sigma_p \subset \ept r$ be the set of pre-partitions of length at
    most $r$ such that $\lambda_r + \dots + \lambda_{r-k} = p$.  By
    Propositions \ref{orbdec:cont} and \ref{jetcomp:cont}, we have:
    \[
      \lct(\M, \detvark) = 
      \min_p 
      \min_{\lambda \in \Sigma_p}
      \left\{ \frac{\operatorname{codim}(\orb \lambda, \arc \M)}{p} \right\}.
    \]
    Consider the following linear function
    \[
      \psi(a_1,\dots,a_r) = 
      \sum_{i=1}^r a_i(s-r +2i -1).
    \]
    Then, by Proposition \ref{discreps:orbcodim}, we get:
    \[
      \lct(\M, \detvark) = 
      \min_p 
      \min_{\lambda \in \Sigma_p}
      \left\{ \frac{\psi(\lambda)}{p} \right\}
      =
      \min_p 
      \min_{\lambda \in \Sigma_p}
      \left\{ \psi\left(\tfrac{\lambda}{p}\right) \right\}.
    \]
    Let $\Sigma \subset \QQ^r$ be the set of tuples $(a_1,\dots,a_r)$ such that
    $a_1 \ge a_2 \ge \dots \ge a_r \ge 0$ and $a_r + \dots + a_{r-k} = 1$. Then:
    \[
      \lct(\M, \detvark) = 
      \min_{a \in \Sigma}
      \left\{ \psi(a) \right\}.
    \]
    The map $\varphi(a_1, \dots ,a_r) = (a_1-a_2, \dots, a_{r-1}-a_r, a_r)$
    sends $\Sigma$ to $\Sigma'$, where $\Sigma' \subset \QQ^r$ is the set of
    tuples $(b_1,\dots,b_r)$ such that $b_i \ge 0$ and $(k+1) b_r + k b_{r-1}
    \dots + b_{r-k} = 1$. Then
    \[
      \lct(\M, \detvark) = 
      \min_{b \in \Sigma'}
      \left\{ \xi(b) \right\},
    \]
    where
    \[
      \xi(b) = \psi(\varphi^{-1}(b)) =
      \sum_{i=1}^r (b_r +b_{r-1}+\dots+b_i)(s-r +2i -1)
      =
      \sum_{j=1}^r b_j \, j \, (s-r +j).
    \]
    Note that in the definition of $\Sigma'$ the only restriction on the first
    $r-k-1$ coordinates $b_1, b_2, \dots, b_{r-k-1}$ is that they are
    nonnegative. 
    Let $\Sigma''$ be the subset of $\Sigma'$ obtained by setting $b_1 = \dots =
    b_{r-k-1} = 0$.
    From the formula for $\xi(b)$ we see that the minimum
    $\min_{b\in\Sigma'}\{\xi(b)\}$ must be achieved in $\Sigma''$. But
    $\Sigma''$ is a simplex and $\xi$ is linear, so the minimum is actually
    achieved in one of the extremal points of $\Sigma''$. These extremal points
    are:
    \begin{multline*}
      P_{r-k} = (0,\dots,0,1,0,\dots,0,0), \quad
      P_{r-k+1} = (0,\dots,0,0,\tfrac{1}{2},\dots,0,0),\quad
      \dots \\ \dots \quad
      P_{r-1} = (0,\dots,0,0,0,\dots,\tfrac{1}{k},0), \quad
      P_{r} = (0,\dots,0,0,0,\dots,0,\tfrac{1}{k+1}).
    \end{multline*}
    The value of $\xi$ at these points is:
    \[
      \xi(P_{r-i}) = \frac{1}{k+1-i}(r-i)(s-i).
    \]
    Therefore
    \[
      \lct (\M, \detvark) =
      \min_{i=0 \dots k} \frac{(r-i)(s-i)}{k+1-i},
    \]
    as required.
  \end{proof}

\section{Motivic integration}

  \label{motiv}

  In the previous section we computed codimensions of orbits in the arc space
  \arc \M, as a mean to obtain formulas for discrepancies and log canonical
  thresholds. But a careful look at the proofs shows that we can understand
  more about the orbits than just their codimensions. As an example of this,
  in this section we compute the motivic volume of the orbits in the arc
  space. This allows us to determine topological zeta functions of
  determinantal varieties.

  Throughout this section, we will restrict ourselves to the case of square
  matrices, i.e. we assume $r=s$.

  \subsection{Motivic volume of orbits}\label{sec:orbvolume}
  
  Before we state the main proposition, we need to recall some notions from
  the group theory of \GLr: parabolic subgroups, Levi factors, flag manifolds,
  and the natural way to obtain a parabolic subgroup from a partition.

  \begin{definition}
    Let $0 < v_1 < v_2 < \dots < v_j < r$ be integers. A \emph{flag} in
    $\CC^r$ of signature $(v_1, \dots, v_j)$ is a nested chain $V_1 \subset
    V_2 \subset \dots \subset V_j \subset \CC^r$ of vector subspaces with
    $\dim V_i = v_i$.  The general linear group \GLr acts transitively on the
    set of all flags with a given signature. The stabilizer of a flag is known
    as a \emph{parabolic subgroup} of \GLr. If $P \subset \GLr$ is a parabolic
    subgroup, the quotient $\GLr/P$ parametrizes flags of a given signature
    and it is known as a \emph{flag variety}.
  \end{definition}

  \begin{definition}
    Let $\{e_1, \dots, e_r\}$ be the standard basis for $\CC^r$, and let
    $\lambda = (d_1^{a_1} \dots d_j^{a_j}) \in \pt r$ be a partition. Write
    $a_{j+1} = r - \sum_{i=1}^j a_i$ and $v_i = a_1 + \dots + a_i$, and
    consider the following vector subspaces of $\CC^r$:
    \[
      V_i = \operatorname{span}(e_1, \dots , e_{v_i}),
      \qquad
      \qquad
      W_i = \operatorname{span}(e_{v_{i-1}+1}, \dots , e_{v_i}).
    \]
    We denote by $P_\lambda$ the stabilizer of the flag $V_1 \subset \dots
    \subset V_j$ and call it the \emph{parabolic subgroup of \GLr associated
    to $\lambda$}. The group $L_\lambda = \GL{a_1} \times \dots \times
    \GL{a_{j+1}}$ embeds naturally in $P_\lambda$ as the group endomorphisms
    of $W_i$, and it is known as the \emph{Levi factor} of the parabolic
    $P_\lambda$.
  \end{definition}

  \begin{example}
    Assume $r=6$ and consider the partition $\lambda = (4,4,4,1,1) = (4^3
    1^2)$.  Then $P_\lambda$ and $L_\lambda$ are the groups of invertible
    $r\times r$ matrices of the forms
    \[
      P_\lambda :\,\,
      \begin{pmatrix}
        \ast & \ast & \ast & \ast & \ast & \ast \\
        \ast & \ast & \ast & \ast & \ast & \ast \\
        \ast & \ast & \ast & \ast & \ast & \ast \\
        0    & 0    & 0    & \ast & \ast & \ast \\
        0    & 0    & 0    & \ast & \ast & \ast \\
        0    & 0    & 0    & 0    & 0    & \ast \\
      \end{pmatrix},
      \qquad
      \qquad
      L_\lambda :\,\,
      \begin{pmatrix}
        \ast & \ast & \ast & 0    & 0    & 0    \\
        \ast & \ast & \ast & 0    & 0    & 0    \\
        \ast & \ast & \ast & 0    & 0    & 0    \\
        0    & 0    & 0    & \ast & \ast & 0    \\
        0    & 0    & 0    & \ast & \ast & 0    \\
        0    & 0    & 0    & 0    & 0    & \ast \\
      \end{pmatrix}.
    \]
  \end{example}

  \begin{proposition}
    \label{prop:orbvolume}
    Assume that $r=s$. Let $\lambda \in \pt r$ be a partition of length at
    most $r$ and consider its associated parabolic subgroup $P_\lambda$ and
    Levi factor $L_\lambda$.  Let $\mu$ be the motivic measure in \arc \M, and
    \orb \lambda the orbit in \arc \M associated to $\lambda$. If $b$ is the
    log discrepancy of the valuation induced by \orb\lambda, we have:
    \[
      \mu(\orb\lambda) \quad=\quad
      \LL^{-b} \, [\GLr/P_\lambda]^2 \, [L_\lambda].
    \]
  \end{proposition}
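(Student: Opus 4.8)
\emph{Plan.}
The plan is to reduce the computation to a single high-level jet scheme, apply the orbit--stabilizer principle inside the Grothendieck ring, and then collapse the resulting $\LL$-powers using the Levi decomposition of $P_\lambda$. First I would invoke Proposition \ref{orbdec:cyl} (orbits are cylinders) together with the smoothness of \M: for $n$ larger than the largest term of $\lambda$ the truncation map identifies $\psi_n(\orb\lambda)$ with the orbit $\torb\lambda n\subseteq\jetn\M$, and since $\orb\lambda$ is a cylinder in the smooth variety \M, the class $[\torb\lambda n]\,\LL^{-nr^2}$ stabilizes and equals $\mu(\orb\lambda)$ (this is the stabilization property of the motivic measure recalled in Section \ref{sec-motivic}). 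So it suffices to show $[\torb\lambda n]=\LL^{\,nr^2-b}[\GLr/P_\lambda]^2[L_\lambda]$ for $n\gg0$.

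Next I would use the orbit--stabilizer theorem in the Grothendieck ring. Let $H_{\lambda,n}$ be the stabilizer of $\delta_\lambda$ in $\jetn G=\jetn{\GLr}\times\jetn{\GLr}$ from Proposition \ref{discreps:stabcodim}. Granting the structural claim below --- that $H_{\lambda,n}$ is an extension of $L_\lambda$ by a unipotent group --- $H_{\lambda,n}$ is a special group, so the orbit map $\jetn G\to\torb\lambda n$ is a Zariski-locally trivial $H_{\lambda,n}$-bundle and $[\jetn G]=[\torb\lambda n]\,[H_{\lambda,n}]$. The class $[\jetn G]$ is immediate: $\jetn{\GLr}\to\GLr$ is a Zariski-locally trivial affine bundle of rank $nr^2$, so $[\jetn G]=\LL^{2nr^2}[\GLr]^2$. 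Moreover $[H_{\lambda,n}]=\LL^{\dim H_{\lambda,n}-\dim L_\lambda}[L_\lambda]$, and $\dim H_{\lambda,n}=(n+1)r^2+b$ by Proposition \ref{discreps:stabcodim} (its codimension in $\jetn G$ is $(n+1)r^2-b$, and $\dim\jetn G=2(n+1)r^2$). Finally I would substitute the Levi decomposition $[\GLr]=[\GLr/P_\lambda]\,\LL^{\dim U_{P_\lambda}}[L_\lambda]$ (valid because \GLr is special and $P_\lambda=L_\lambda\ltimes U_{P_\lambda}$ with unipotent radical $U_{P_\lambda}$ an affine space); this makes the formal division by $[L_\lambda]$ legitimate, since $[\GLr]^2/[L_\lambda]=[\GLr]\,[\GLr/P_\lambda]\,\LL^{\dim U_{P_\lambda}}$ is an honest element, and the exponents telescope to $\LL^{-b}$ via the numerical identity $\dim L_\lambda+2\dim U_{P_\lambda}=r^2$ (which is just $\dim P_\lambda+\dim(\GLr/P_\lambda)=\dim\GLr$), yielding $\mu(\orb\lambda)=\LL^{-b}[\GLr/P_\lambda]^2[L_\lambda]$.

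\emph{The crux} is the structural statement about the stabilizer, and this is where the real work lies. Writing $\lambda=(d_1^{a_1}\cdots d_m^{a_m})$ with $d_1>\cdots>d_m\ge0$ and $\sum a_i=r$ (padding with a final block of zeros if $\lambda$ has length $<r$), we have $\delta_\lambda=\operatorname{diag}(t^{d_1}I_{a_1},\dots,t^{d_m}I_{a_m})$. I would analyze the defining equation $g\,\delta_\lambda=\delta_\lambda\,h$ of $H_{\lambda,n}$ block by block: the relation $g_{ij}t^{d_j}=t^{d_i}h_{ij}$ on the $(i,j)$ block forces $\bar g_{ij}=0$ for $i<j$ and $\bar h_{ij}=0$ for $i>j$, forces $\bar g_{ii}=\bar h_{ii}$ on the diagonal (and then invertibility of $\bar g$ gives $\bar g_{ii}\in\GL{a_i}$), and off the diagonal it couples $g_{ij}$ and $h_{ij}$ modulo a few top $t$-coefficients. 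This produces a split homomorphism $\pi\colon H_{\lambda,n}\to L_\lambda=\prod_i\GL{a_i}$, $(g,h)\mapsto(\bar g_{ii})_i$, split by the constant block-diagonal embedding $(\ell_i)_i\mapsto(\operatorname{diag}(\ell_i),\operatorname{diag}(\ell_i))$. The kernel of $\pi$ is unipotent: it is an extension of a group of block-unitriangular pairs $(\bar g,\bar h)$ by the congruence subgroup $\{(g,h):\bar g=\bar h=I\}$, which is an iterated extension of vector groups. The one point demanding care is the bookkeeping in $\tpsr n\CC$: because $t^{d_i}\cdot t^{\,n+1-d_i}=0$, the highest $d_i$ coefficients of the relevant entries are unconstrained, and it is exactly this that makes $\dim\ker\pi$ come out to the value $(n+1)r^2+b-\dim L_\lambda$ forced by Proposition \ref{discreps:stabcodim}. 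With unipotence of $\ker\pi$ established, both $[H_{\lambda,n}]=\LL^{\dim\ker\pi}[L_\lambda]$ and the special-ness of $H_{\lambda,n}$ follow, and the chain of identities above closes.
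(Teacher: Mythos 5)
Your proposal is correct and follows essentially the same route as the paper: truncate to a high jet scheme, apply the orbit--stabilizer identity in the Grothendieck ring, and factor the class of the stabilizer $H_{\lambda,n}$ into a power of $\LL$ times $[L_\lambda]$ before substituting the Levi decomposition $[\GLr]=[\GLr/P_\lambda]\,\LL^{\dim U_{P_\lambda}}\,[L_\lambda]$. The only difference in detail is that the paper realizes $H_{\lambda,n}$ as an affine-space bundle over its reduction $H\subset G$ and computes $[H]=[P_\lambda]^2[L_\lambda]^{-1}$ directly, whereas you package the same block analysis as a split surjection $\pi\colon H_{\lambda,n}\to L_\lambda$ with unipotent kernel and are slightly more explicit than the paper in invoking specialness of the stabilizer to justify the orbit--stabilizer multiplicativity.
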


  \begin{proof}
    \def\sta{\ensuremath{H_{\lambda,n}}\xspace}%
    \def\g{\mathfrak{g}}%
    \def\h{\mathfrak{h}}%

    Consider $n$, $\delta_\lambda$ and $\sta \subset \jetn G$ as in Proposition
    \ref{discreps:stabcodim}. If \torb \lambda n is the truncation of \orb
    \lambda to \jetn \M, we know that for $n$ large enough
    \[
      \mu(\orb\lambda) \,\,=\,\, \LL^{-r^2n} \, [\torb\lambda n]
      \,\,=\,\, \LL^{-r^2n}\, [\jetn G] \, [\sta]^{-1}
      \,\,=\,\, \LL^{r^2n}\, [\GLr]^2 \, [\sta]^{-1}.
    \]
    At the beginning of the proof of Proposition \ref{discreps:stabcodim} we
    found the equations defining \sta:
    \begin{gather}
      (g,h) \in \sta
      \qquad \Leftrightarrow \qquad
      g \cdot \delta_\lambda \cdot h^{-1} = \delta_\lambda
      \qquad \Leftrightarrow \qquad
      g \cdot \delta_\lambda = \delta_\lambda \cdot h
      \qquad \Leftrightarrow \qquad \nonumber\\ \nonumber\\
      \label{mateq}
      \begin{pmatrix}
        t^{\lambda_1} \ast & t^{\lambda_2} \ast & \cdots & t^{\lambda_r} \ast\\ 
        t^{\lambda_1} \ast & t^{\lambda_2} \ast & \cdots & t^{\lambda_r} \ast\\ 
        \vdots             & \vdots             & \ddots & \vdots            \\ 
        t^{\lambda_1} \ast & t^{\lambda_2} \ast & \cdots & t^{\lambda_r} \ast  
      \end{pmatrix}
      =
      \begin{pmatrix}
        t^{\lambda_1} \ast & t^{\lambda_1} \ast & \cdots & t^{\lambda_1} \ast \\
        t^{\lambda_2} \ast & t^{\lambda_2} \ast & \cdots & t^{\lambda_2} \ast \\
        \vdots             & \vdots             & \ddots & \vdots       \\ 
        t^{\lambda_r} \ast & t^{\lambda_r} \ast & \cdots & t^{\lambda_r} \ast \\
      \end{pmatrix}.
    \end{gather}

    As a variety, \jetn G can be written as product $G \times \g^{2n}$, where
    $\g \simeq \AA^{r^2}$ is the Lie algebra of $\GLr$. Let $g_{i,j}^{(k)}$
    and $h_{i,j}^{(k)}$ be the natural coordinates on $\jetn G =
    \left(\GLr\times\g\right)^2$, where $g_{i,j}^{(0)} = g_{i,j}$ and
    $h_{i,j}^{(0)} = h_{i,j}$ are coordinates for $G = \GLr \times \GLr$.
    Then, for $n$ large enough, the equations in \eqref{mateq} can be
    expressed as:
    \begin{align}\label{mateq2}
      t^{\lambda_j} \sum_{k=0}^n g_{i,j}^{(k)} t^k
      \quad = \quad
      t^{\lambda_i} \sum_{k=0}^n h_{i,j}^{(k)} t^k
      \qquad
      \mod t^{n+1}.
    \end{align}

    Let $H\subset G$ be the truncation of \sta. Then $H$ is the subgroup of
    $G$ given by those equation in \eqref{mateq2} involving only the variables
    $g_{i,j}$ and $h_{i,j}$; these equations are:
    \begin{align}
      &g_{i,j} = h_{i,j}  && \text{if $\lambda_i = \lambda_j$,} \label{eq1}\\
      &g_{i,j} = 0  && \text{if $\lambda_i \ne \lambda_j$ and $i<j$,}
      \label{eq2}\\
      &h_{i,j} = 0  && \text{if $\lambda_i \ne \lambda_j$ and $i>j$.}
      \label{eq3}
    \end{align}
    Form \eqref{eq2} and \eqref{eq3}, we see that $H$ is a subgroup of
    $P_\lambda^{\text{op}} \times P_\lambda \subset G$, and \eqref{eq1} tells
    us that we can obtain $H$ from $P_\lambda^{\text{op}} \times P_\lambda$ by
    identifying the two copies of the Levi $L_\lambda$. Hence $[H] =
    [P_\lambda]^2 \, [L_\lambda]^{-1}$.
    
    From \eqref{mateq2} we also see that \sta is a sub-bundle of $H \times
    \g^{2n}$. More precisely, if $\h$ is the fiber of \sta over the identity
    in $H$, then $\h \subset \g^{2n}$ is an affine space and all the fibers of
    \sta are isomorphic to $\h$.  The codimension of $\h$ in $\g^{2n}$ can be
    computed with the same method used in the proof of Proposition
    \ref{discreps:stabcodim}: 
    \[
      \operatorname{codim}(\h,\g^{2n}) =
      nr^2 - \sum_{i=1}^r \lambda_i (2i-1) = nr^2 - b,
    \]
    where $b$ is the log discrepancy of the valuation induced by \orb\lambda.
    As a consequence
    \[
      [\h] \quad=\quad [\g^{2n}] \,\, \LL^{-nr^2+b} \quad=\quad \LL^{nr^2+b},
    \]
    and
    \begin{align*}
      \mu(\orb\lambda)
      &\,\,=\,\, 
      \LL^{r^2n}\, [\GLr]^2 \, [\sta]^{-1}
      \,\,=\,\, 
      \LL^{r^2n}\, [\GLr]^2 \, [H]^{-1} \, [\h]^{-1}
      \,\,=\,\, 
      \LL^{-b}\, [\GLr]^2 \, [H]^{-1}
      \\&\,\,=\,\, 
      \LL^{-b}\, [\GLr]^2 \, [P_\lambda]^{-2} \, [L_\lambda]
      \,\,=\,\, 
      \LL^{-b}\, [\GLr/P_\lambda]^2 \, [L_\lambda].
      \qedhere
    \end{align*}

  \end{proof}

  \subsection{Topological zeta function} 
  
  Recall from Section \ref{sec-motivic} that the motivic Igusa zeta function
  for the pair $(\M, \detvark)$ is defined as
  \[
    Z_{\detvark}(s)
    := \int_{\arc\M} |\II_{\detvark}|^s d\mu
    = \sum_{p=0}^\infty \mu\left(\operatorname{Cont}^{=p} \detvark\right)
    \LL^{-sp},
  \]
  where $\mu$ is the motivic measure on \arc\M and $\LL^{-s}$ is considered as
  a formal variable. The topological zeta function
  $Z^{\operatorname{top}}_{\detvark}(s)$ can be obtained from the Igusa zeta
  function by formally expanding $Z_{\detvark}(s)$ as a power series in
  $(\LL-1)$ and then extracting the constant term (i.e.\ by specializing $\LL$
  to $\chi(\AA^1)=1$).

  Using Propositions \ref{orbdec:cont} and \ref{prop:orbvolume} we can write
  $Z_{\detvark}(s)$ as
  \begin{equation}\label{eq:zeta1}
    Z_{\detvark}(s) 
    = \sum_{\lambda \in \ptr} 
      \mu(\orb\lambda) 
      \LL^{-s(\lambda_r+\dots+\lambda_{r-k})}
    = \sum_{\lambda \in \ptr} 
      [\GLr/P_\lambda]^2 [L_\lambda]
      \LL^{-b_\lambda-s(\lambda_r+\dots+\lambda_{r-k})},
  \end{equation}
  where $b_\lambda = \sum_{i=1}^r \lambda_i (2i-1)$ is the log discrepancy of
  the valuation induced by $\orb\lambda$. There are only finitely many
  possibilities for the value of $[\GLr/P_\lambda]^2 [L_\lambda]$, and it will
  be convenient to group the terms in the sum above accordingly. In order to
  do so, consider the bijection between $\ptr$ and $\NN^r$ given by
  \[
    \begin{gathered}
      \lambda = (\lambda_1, \lambda_2, \dots, \lambda_r) \in \ptr
      \quad\longmapsto\quad
      a(\lambda) = (\lambda_1-\lambda_2, \lambda_2-\lambda_3, \dots, \lambda_r)
      \in \NN^r,
    \\
      a = (a_1, a_2, \dots, a_r) \in \NN^r
      \quad\longmapsto\quad
      \lambda(a) = (a_1+\dots+a_r, a_2+\dots+a_r,\dots,a_r)
      \in \ptr.
    \end{gathered}
  \]
  For a subset $I \subseteq \{1,\dots,r-1\}$, consider $I^c =
  \{1,\dots,r-1\}\setminus I$ and define
  \[
    \Omega_I = \{\,\, a \in \NN^r \,\,:\,\, (a_i=0 \,\, \forall i\in I)
    \,\,\, \text{and} \,\,\, (a_j \ne 0 \,\, \forall j\in I^c) \,\,\}.
  \]
  Let $\lambda$ and $\lambda'$ be two partitions such that both $a(\lambda)$
  and $a(\lambda')$ belong to $\Omega_I$ for some $I$.  From the definitions
  of $P_\lambda$ and $L_\lambda$ (see Section \ref{sec:orbvolume}) we see that
  $[\GLr/P_\lambda]^2 [L_\lambda] = [\GLr/P_{\lambda'}]^2 [L_{\lambda'}]$.
  Hence, given a subset $I \subseteq \{1,\dots,r-1\}$ we can consider $\eta(I)
  = [\GLr/P_\lambda]^2 [L_\lambda]$, where $\lambda$ is any partition with
  $a(\lambda) \in \Omega_I$, and we obtain a well-defined function on the
  subsets of $\{1,\dots,r-1\}$. 
  
  Fix a subset $I \subseteq \{1,\dots,r-1\}$ and a partition $\lambda$ such
  that $a(\lambda)\in \Omega_I$. Consider $I^c_r = \{1,\dots,r\}\setminus I =
  \{i_1,\dots,i_\ell\}$, where $i_j < i_{j+1}$. Set $i_0=0$. Then
  $\GLr/P_\lambda$ is the manifold of partial flags of signature
  $(i_1,\dots,i_\ell)$, and its class in the Grothendieck group of varieties
  is given by
  \[
    [\GLr/P_\lambda] 
    = \prod_{j=1}^\ell [G(i_{j-1},i_j)]
    = \prod_{j=1}^\ell [G(i_j-i_{j-1},i_j)],
  \]
  where $G(u,v)$ is the Grassmannian of $u$-dimensional vector subspaces of
  $\CC^v$. Analogously:
  \[
    [L_\lambda] = \prod_{j=1}^\ell [\GL{i_j-i_{j-1}}].
  \]
  If we define $d(I,i_j) = i_j-i_{j-1}$ for $i_j \in I^c_r$, we can write:
  \begin{equation}\label{eq:zeta2}
    \eta(I) 
    = 
    [\GLr/P_\lambda]^2 [L_\lambda]
    =
    \prod_{i \not\in I} [G(d(I,i),i)]^2 [\GL{d(I,i)}].
  \end{equation}
  This shows more explicitly that $\eta(I)$ depends only on $I$, and not on the
  particular partition $\lambda$ in $\Omega_I$.
  From Equation \eqref{eq:zeta1} we obtain:
  \begin{equation}\label{eq:zeta3}
    Z_{\detvark}(s) 
    = \sum_{I\subseteq \{1,\dots,r-1\}}
      \left(
        \eta(I) 
        \sum_{a(\lambda)\in\Omega_I}
        \LL^{-b_\lambda-s(\lambda_r+\dots+\lambda_{r-k})}
      \right).
  \end{equation}
  Consider
  \[
    \psi(a)
    = b_{\lambda(a)}+s(\lambda(a)_r+\dots+\lambda(a)_{r-k})
    = \psi_1 a_1 + \dots + \psi_r a_r,
  \]
  where
  \begin{equation}\label{eq:psi}
    \psi_i = i^2 + s \,\, \max\{0, k+1+i-r\}.
  \end{equation}
  Then
  \begin{multline}\label{eq:zeta4}
    \sum_{a(\lambda)\in\Omega_I} \LL^{-b_\lambda-s(\lambda_r+\dots+\lambda_{r-k})}
    \quad = \quad
    \sum_{a \in \Omega_I} \LL^{-\psi_1 a_1 - \dots - \psi_r a_r}
    \quad = \quad
    \sum_{a \in \Omega_I} \LL^{-\sum_{i\not\in I}\psi_i a_i}
    \\
    = \quad
    \left(
    \mathop{\prod_{i \not\in I}}_{i \ne r} \,\, \sum_{a_i=1}^\infty \LL^{-\psi_i a_i}
    \right) \cdot \left(
    \sum_{a_r=0}^\infty \LL^{-\psi_r a_r}
    \right)
    \quad = \quad
    \LL^{\psi_r} \cdot
    \prod_{i \not\in I} \frac{\LL^{-\psi_i}}{1-\LL^{-\psi_i}}
    \quad = \quad
    \LL^{\psi_r} \cdot
    \prod_{i \not\in I} \frac{1}{\LL^{\psi_i} - 1}.
  \end{multline}
  Combining Equations \eqref{eq:zeta2}, \eqref{eq:zeta3}, and
  \eqref{eq:zeta4}, we get:
  \begin{equation}\label{eq:zeta5}
    Z_{\detvark}(s) 
    =
    \LL^{\psi_r}
    \sum_{I \subseteq \{1,\dots,r-1\}}
    \prod_{i \not\in I}
    \frac{1}{\LL^{\psi_i}-1}
    [G(d(I,i),i)]^2 [\GL{d(I,i)}].
  \end{equation}
  We will not try to simplify Equation \eqref{eq:zeta5} any further.
  Instead, we will use it to compute the topological zeta function. As
  explained in Section \ref{sec-motivic}, it is enough to expand each
  summand in \eqref{eq:zeta5} as a power series in $(\LL-1)$. We will write
  $O((\LL-1)^\ell)$ to denote a power series in $(\LL-1)$ which is divisible
  by $(\LL-1)^\ell$. We have:
  \[
      \LL^{\psi_i} = 1 + O(\LL-1), \qquad
      \LL^{\psi_i}-1 = \psi_i \cdot (\LL-1) + O((\LL-1)^2),
  \]
  and
  \[
    [G(1,i)] = [\PP^{i-1}] = 1+\LL+\LL^2+\dots+\LL^{i-1} = i + O(\LL-1).
  \]
  The class of $\GL d$ can be computed by induction on $d$. Let $P =
  P_{(1)}$ be the parabolic subgroup of $\GL d$ whose corresponding
  quotient is projective space $\GL d/P = \PP^{d-1}$. Then, for $d>1$,
  \[
    [P] = [\GL 1] [\GL {d-1}] \LL^{d-1}
    = [\GL 1] [\GL {d-1}] (1 + O(\LL-1)),
    \qquad
    [\PP^{d-1}] = d + O(\LL-1).
  \]
  Since $[\GL 1] = \LL -1$, we see that
  \[
     [\GL d] = [P] [\PP^{d-1}] = [\GL 1] [\GL {d-1}] (d + O(\LL-1)) = O((\LL-1 )^d).
  \]
  Hence
  \[
    \frac{1}{\LL^{\psi_i}-1}
    [G(d,i)]^2 [\GL d] = O((\LL-1)^{d-1}),
  \]
  and
  \[
    \frac{1}{\LL^{\psi_i}-1}
    [G(1,i)]^2 [\GL 1] =  \frac{i^2}{\psi_i} + O(\LL-1).
  \]
  In particular, the only summands in Equation \eqref{eq:zeta5} not divisible
  by $(\LL - 1)$ are those for which $d(I,i)=1$ for all $i\not\in I$. Since
  $d(I,i)=1$ if and only if $i-1 \not\in I$, the only significant summand is
  the one corresponding to $I = \emptyset$. Hence
  \[
    Z_{\detvark}(s) 
    =
    \prod_{i=1}^r \frac{i^2}{\psi_i} + O(\LL-1).
  \]
  Combining this with Equation \eqref{eq:psi} we get the topological zeta
  function:
  \begin{multline*}
    Z^{\operatorname{top}}_{\detvark}(s) 
    \quad = \quad
    \prod_{i=1}^r \frac{i^2}{\psi_i}
    \quad = \quad
    \prod_{i=1}^{r-k-1}\frac{i^2}{i^2}
    \prod_{i=r-k}^{r}\frac{i^2}{i^2+s(k+1-i-r)}
    \\=\quad
    \prod_{i=r-k}^{r}
    \left(1+s \, \frac{k+1-i-r}{i^2}\right)^{-1}
    \quad = \quad
    \prod_{j=0}^{k}\left(1+s \, \frac{k+1-j}{(r-j)^2}\right)^{-1}.
  \end{multline*}

  The following theorem summarizes the results of this section.


  \begin{theorem}
    Let $\M = \AA^{r^2}$ be the space of square $r \times r$ matrices, and let
    \detvark be the subvariety of matrices of rank at most $k$. Then the
    topological zeta function of the pair $(\M, \detvark)$ is given by
    \[
      Z^{\operatorname{top}}_{\detvark}(s) 
      =
      \prod_{\zeta \in \Omega} \frac{1}{1-s\,\zeta^{-1}}
    \]
    where $\Omega$ is the set of poles:
    \[
      \Omega = \left\{
      \,\,
      -\frac{r^2}{k+1},\quad
      -\frac{(r-1)^2}{k},\quad
      -\frac{(r-2)^2}{k-1},\quad
      \dots,\quad
      -(r-k)^2
      \,\,
      \right\}.
    \]
  \end{theorem}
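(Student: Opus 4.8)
The heavy lifting is already done in the body of this section, so the proof amounts to recognizing the closed form obtained there inside the statement. Recall that by combining the orbit description of contact loci (Proposition~\ref{orbdec:cont}) with the motivic volumes of orbits (Proposition~\ref{prop:orbvolume}) one writes the motivic Igusa zeta function $Z_{\detvark}(s)$ explicitly (Equation~\eqref{eq:zeta5}), and that applying the specialization $\operatorname{ev}_{\LL=1}$ of Section~\ref{sec-motivic} kills every summand except the one indexed by $I=\emptyset$, leaving
\[
  Z^{\operatorname{top}}_{\detvark}(s) \;=\; \prod_{j=0}^{k}\left(1+s\,\frac{k+1-j}{(r-j)^2}\right)^{-1}.
\]
The plan is simply to rewrite this product of $k+1$ rational factors in the ``product over poles'' shape and to identify the resulting list of numbers with $\Omega$.

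For the rewriting, put
\[
  \zeta_j \;=\; -\,\frac{(r-j)^2}{k+1-j}, \qquad j = 0,1,\dots,k.
\]
Since $1 \le k+1-j$ for every $j$ in this range, each $\zeta_j$ is a well-defined negative rational number and $\zeta_j^{-1} = -\,(k+1-j)/(r-j)^2$, so that
\[
  1 + s\,\frac{k+1-j}{(r-j)^2} \;=\; 1 - s\,\zeta_j^{-1}.
\]
Hence $Z^{\operatorname{top}}_{\detvark}(s) = \prod_{j=0}^{k}\bigl(1 - s\,\zeta_j^{-1}\bigr)^{-1}$. Finally, as $j$ runs through $0,1,\dots,k$ the values $\zeta_j$ run through $-r^2/(k+1)$, $-(r-1)^2/k$, $-(r-2)^2/(k-1)$, \dots, $-(r-k)^2$, which is exactly the list $\Omega$ appearing in the statement; this yields $Z^{\operatorname{top}}_{\detvark}(s) = \prod_{\zeta\in\Omega}\frac{1}{1-s\,\zeta^{-1}}$, as claimed.

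I do not expect any real obstacle here: the substance of the theorem is the motivic computation performed earlier in the section (the orbit-by-orbit evaluation of $Z_{\detvark}(s)$ and its specialization at $\LL=1$), and all that is added above is the elementary algebraic identity relating $1 + s(k+1-j)/(r-j)^2$ to $1 - s\,\zeta_j^{-1}$. The one point that merits a word of caution is the phrase ``$\Omega$ is the set of poles'': the $\zeta_j$ need not be pairwise distinct --- for example, when $r=6$ and $k=3$ one gets $\zeta_0 = \zeta_3 = -9$ --- so $\Omega$ should be read as the list $(\zeta_0,\dots,\zeta_k)$ with multiplicities, equivalently the product on the right-hand side is taken over $j=0,\dots,k$. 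Under that reading the asserted formula is literally the displayed product above; if one prefers to view $\Omega$ as a genuine set, a repeated value $\zeta_j$ simply records that $Z^{\operatorname{top}}_{\detvark}(s)$ has a pole there of the corresponding higher order.
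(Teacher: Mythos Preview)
Your proposal is correct and matches the paper's approach exactly: the theorem is stated as a summary of the computation carried out in the section, and the only remaining step is the trivial rewriting of $\prod_{j=0}^{k}\bigl(1+s(k+1-j)/(r-j)^2\bigr)^{-1}$ as $\prod_{\zeta\in\Omega}(1-s\zeta^{-1})^{-1}$. Your remark that the $\zeta_j$ can collide (so $\Omega$ must be read with multiplicity) is a genuine observation the paper does not make.
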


  \bibliography{references_detarcs}

\providecommand{\bysame}{\leavevmode\hbox to3em{\hrulefill}\thinspace}
\providecommand{\MR}{\relax\ifhmode\unskip\space\fi MR }
\providecommand{\MRhref}[2]{%
  \href{http://www.ams.org/mathscinet-getitem?mr=#1}{#2}
}
\providecommand{\href}[2]{#2}
\begin{thebibliography}{dCEP80}

\bibitem[BV88]{BV88}
Winfried Bruns and Udo Vetter, \emph{Determinantal rings}, Lecture Notes in
  Mathematics, vol. 1327, Springer-Verlag, Berlin, 1988.

\bibitem[dCEP80]{dCEP80}
C.~de~Concini, David Eisenbud, and C.~Procesi, \emph{Young diagrams and
  determinantal varieties}, Invent. Math. \textbf{56} (1980), no.~2, 129--165.

\bibitem[dFEI08]{dFEI08}
Tommaso de~Fernex, Lawrence Ein, and Shihoko Ishii, \emph{Divisorial valuations
  via arcs}, Publ. Res. Inst. Math. Sci. \textbf{44} (2008), no.~2, 425--448.

\bibitem[DL98]{DL98}
Jan Denef and Fran{\c{c}}ois Loeser, \emph{Motivic {I}gusa zeta functions}, J.
  Algebraic Geom. \textbf{7} (1998), no.~3, 505--537.

\bibitem[DL99]{DL99}
\bysame, \emph{Germs of arcs on singular algebraic varieties and motivic
  integration}, Invent. Math. \textbf{135} (1999), no.~1, 201--232.

\bibitem[DL02]{DL02}
\bysame, \emph{Motivic integration, quotient singularities and the {M}c{K}ay
  correspondence}, Compositio Math. \textbf{131} (2002), no.~3, 267--290.

\bibitem[ELM04]{ELM04}
Lawrence Ein, Robert Lazarsfeld, and Mircea Musta{\c{t}}{\u{a}}, \emph{Contact
  loci in arc spaces}, Compos. Math. \textbf{140} (2004), no.~5, 1229--1244.

\bibitem[EM06]{EM06}
Lawrence Ein and Mircea Musta{\c{t}}{\u{a}}, \emph{Invariants of singularities
  of pairs}, International {C}ongress of {M}athematicians. {V}ol. {II}, Eur.
  Math. Soc., Z\"urich, 2006, pp.~583--602.

\bibitem[EMY03]{EMY03}
Lawrence Ein, Mircea Musta{\c{t}}{\u{a}}, and Takehiko Yasuda, \emph{Jet
  schemes, log discrepancies and inversion of adjunction}, Invent. Math.
  \textbf{153} (2003), no.~3, 519--535.

\bibitem[Ful97]{Ful97}
William Fulton, \emph{Young tableaux}, London Mathematical Society Student
  Texts, vol.~35, Cambridge University Press, Cambridge, 1997, With
  applications to representation theory and geometry.

\bibitem[GS06]{GS06}
Russell~A. Goward, Jr. and Karen~E. Smith, \emph{The jet scheme of a monomial
  scheme}, Comm. Algebra \textbf{34} (2006), no.~5, 1591--1598.

\bibitem[Ish04]{Ish04}
Shihoko Ishii, \emph{The arc space of a toric variety}, J. Algebra \textbf{278}
  (2004), no.~2, 666--683.

\bibitem[Ish08]{Ish06}
\bysame, \emph{Maximal divisorial sets in arc spaces}, Algebraic geometry in
  {E}ast {A}sia---{H}anoi 2005, Adv. Stud. Pure Math., vol.~50, Math. Soc.
  Japan, Tokyo, 2008, pp.~237--249.

\bibitem[Joh03]{Joh03}
Amanda~Ann Johnson, \emph{Multiplier ideals of determinantal ideals}, Ph.D.
  thesis, University of Michigan, 2003.

\bibitem[Kon95]{Kon95}
Maxim Kontsevich, \emph{Motivic integration}, 1995, Lecture at Orsay.

\bibitem[KS05a]{KS05a}
Toma{\v{z}} Ko{\v{s}}ir and B.~A. Sethuraman, \emph{Determinantal varieties
  over truncated polynomial rings}, J. Pure Appl. Algebra \textbf{195} (2005),
  no.~1, 75--95.

\bibitem[KS05b]{KS05b}
\bysame, \emph{A {G}roebner basis for the {$2\times2$} determinantal
  ideal{$\!\!\mod t\sp 2$}}, J. Algebra \textbf{292} (2005), no.~1, 138--153.

\bibitem[Lak87]{Lak85}
Dan Laksov, \emph{Completed quadrics and linear maps}, Algebraic geometry,
  {B}owdoin, 1985 ({B}runswick, {M}aine, 1985), Proc. Sympos. Pure Math.,
  vol.~46, Amer. Math. Soc., Providence, RI, 1987, pp.~371--387.

\bibitem[LJ90]{LJ90}
Monique Lejeune-Jalabert, \emph{Courbes trac\'ees sur un germe d'hypersurface},
  American Journal of Mathematics \textbf{112} (1990), no.~4, 525--568.

\bibitem[LJR98]{LJR98}
Monique Lejeune-Jalabert and Ana Reguera, \emph{Arcs and wedges on sandwiched
  surface singularities}, C. R. Acad. Sci. Paris S\'er. I Math. \textbf{326}
  (1998), no.~2, 207--212.

\bibitem[LJR08]{LJR08}
\bysame, \emph{Exceptional divisors which are not uniruled belong to the image
  of the nash map}, Arxiv preprint arXiv:0811.2421 (2008).

\bibitem[Mus01]{Mus01}
Mircea Musta{\c{t}}{\u{a}}, \emph{Jet schemes of locally complete intersection
  canonical singularities}, Invent. Math. \textbf{145} (2001), no.~3, 397--424,
  With an appendix by David Eisenbud and Edward Frenkel.

\bibitem[Mus02]{Mus02}
\bysame, \emph{Singularities of pairs via jet schemes}, J. Amer. Math. Soc.
  \textbf{15} (2002), no.~3, 599--615 (electronic).

\bibitem[Nas95]{Nas95}
John~F. Nash, Jr., \emph{Arc structure of singularities}, Duke Math. J.
  \textbf{81} (1995), no.~1, 31--38 (1996), A celebration of John F. Nash, Jr.

\bibitem[Nob91]{Nob91}
Augusto Nobile, \emph{On {N}ash theory of arc structure of singularities}, Ann.
  Mat. Pura Appl. (4) \textbf{160} (1991), 129--146 (1992).

\bibitem[Pl{\'e}05]{Ple05}
Camille Pl{\'e}nat, \emph{R\'esolution du probl\`eme des arcs de {N}ash pour
  les points doubles rationnels {$D\sb n$}}, C. R. Math. Acad. Sci. Paris
  \textbf{340} (2005), no.~10, 747--750.

\bibitem[PPP06]{PPP06}
Camille Pl{\'e}nat and Patrick Popescu-Pampu, \emph{A class of non-rational
  surface singularities with bijective {N}ash map}, Bull. Soc. Math. France
  \textbf{134} (2006), no.~3, 383--394.

\bibitem[Sem51]{Sem51}
John~G. Semple, \emph{The variety whose points represent complete collineations
  of {$S\sb r$} on {$S'\sb r$}}, Univ. Roma. Ist. Naz. Alta Mat. Rend. Mat. e
  Appl. (5) \textbf{10} (1951), 201--208.

\bibitem[Tyr56]{Tyr56}
J.~A. Tyrrell, \emph{Complete quadrics and collineations in {$S\sb n$}},
  Mathematika \textbf{3} (1956), 69--79.

\bibitem[Vai84]{Vai84}
Israel Vainsencher, \emph{Complete collineations and blowing up determinantal
  ideals}, Math. Ann. \textbf{267} (1984), no.~3, 417--432.

\bibitem[Vey06]{Vey06}
Willem Veys, \emph{Arc spaces, motivic integration and stringy invariants},
  Singularity theory and its applications, Adv. Stud. Pure Math., vol.~43,
  Math. Soc. Japan, Tokyo, 2006, pp.~529--572.

\bibitem[Voj07]{Voj07}
Paul Vojta, \emph{Jets via {H}asse-{S}chmidt derivations}, Diophantine
  geometry, CRM Series, vol.~4, Ed. Norm., Pisa, 2007, pp.~335--361.

\bibitem[Yue07a]{Yue07a}
Cornelia Yuen, \emph{Jet schemes of determinantal varieties}, Algebra, geometry
  and their interactions, Contemp. Math., vol. 448, Amer. Math. Soc.,
  Providence, RI, 2007, pp.~261--270.

\bibitem[Yue07b]{Yue07b}
\bysame, \emph{The multiplicity of jet schemes of a simple normal crossing
  divisor}, Comm. Algebra \textbf{35} (2007), no.~12, 3909--3911.

\end{thebibliography}
  \bibliographystyle{amsalpha}

  \end{document}